\newtheorem{theorem}{Theorem}[section]
\newtheorem{lemma}[theorem]{Lemma}
\newtheorem{corollary}[theorem]{Corollary}
\newtheorem{conjecture}{Conjecture}
\theoremstyle{remark}
\newtheorem{remark}[theorem]{Remark}
\newtheorem{definition}[theorem]{Definition}
\newtheorem{example}[theorem]{Example}
\newcommand{\E}{\mathbb{E}}
\newcommand{\Prob}{\mathbb{P}}
\newcommand{\R}{\mathbb{R}}
\newcommand{\N}{\mathbb{N}}
\newcommand{\Z}{\mathbb{Z}}
\newcommand{\I}{\mathbb{I}}
\newcommand{\bs}{\boldsymbol}
\newcommand{\dd}{\mathrm{d}}
\newcommand{\ee}{\mathrm{e}}
\newcommand{\abs}[1]{\left\vert{#1}\right\vert}
\newcommand{\defin}[1]{\textbf{#1}}
\DeclareMathOperator*{\supp}{supp}
\DeclareMathOperator*{\Var}{Var}
\newcommand{\given}{\,\vert\, }
\begin{document}

\begin{frontmatter}
\title{A characterization of the strong law of large numbers for Bernoulli sequences}
\runtitle{A characterization of the strong LLN for Bernoulli sequences}

\begin{aug}
\author[A]{\fnms{Luísa} \snm{Borsato}\ead[label=e1]{luisabborsato@gmail.com}},
\author[B]{\fnms{Eduardo} \snm{Horta}\ead[label=e2,mark]{eduardo.horta@ufrgs.br}}
\and
\author[B]{\fnms{Rafael} \snm{Rigão Souza}\ead[label=e3,mark]{rafars@mat.ufrgs.br}}
\address[A]{Institute of Mathematics and Statistics,
Universidade de São Paulo, São Paulo, Brazil.
\printead{e1}}

\address[B]{Institute of Mathematics and Statistics,
Universidade Federal do Rio Grande do Sul, Porto Alegre, Brazil.
\printead{e2,e3}}
\end{aug}

\begin{abstract}
The law of large numbers is one of the most fundamental results in Probability Theory. In the case of independent sequences, there are some known characterizations; for instance, in the independent and identically distributed setting it is known that the law of large numbers is equivalent to integrability. In the case of dependent sequences, there are no known general characterizations --- to the best of our knowledge. We provide such a characterization for identically distributed Bernoulli sequences in terms of a product disintegration.
\end{abstract}

\begin{keyword}
\kwd{law of large numbers}
\kwd{random measure}
\kwd{disintegration}
\kwd{conditional independence}
\end{keyword}

\end{frontmatter}

\section{Introduction}
It is somewhat intuitive to most people\footnote{One may argue that most people interpret probability --- at least when it comes to coin-throwing --- in a Popperian sense, i.e. seeing probability statements as utterances which quantify the \emph{physical propensity} of a given outcome in a given experiment, in lieu of an epistemic view where such statements only measure the degree to which we are uncertain about said outcome \citep{popper1959propensity}. To us the propensity interpretation seems adequate in the framework of coin-throwing, as it is meaningful to establish a connection between the coin's physical center of mass and the propensity of it landing `heads' in any one given throw: recalling that a coin throw is governed by classical (deterministic) mechanics, we could for instance let $\Omega$ denote the set of all possible initial conditions (angle, speed, spin, etc) and then make the requirement that the subset comprised of all initial conditions whose corresponding outcome is `heads' be a measurable set, with measure $p\in[0,1]$. Clearly such $p$ is a function of the coin's center of mass.}
that if a coin is thrown independently a large number of times, then the observed proportion of heads should not be far from the \emph{parameter of unbalancedness} $\theta\in[0,1]$ (this quantity being understood as representing the probability, or `chance', of observing heads in any one individual throw). In the Theory of Probability, the law of large numbers supports, generalizes and also provides a precise mathematical meaning to this intuition --- an intuition which can be traced back at least to Cardano's 16th-century \emph{Liber de ludo aleae} \citep{cardano2015book}. In his 1713 treatise \emph{Ars Conjectandi}, Jacob Bernoulli gave the first proof of the fact that (in modern notation) if $X$ is a Binomial random variable with parameters $n\in\N$ and $0\leq p\leq1$, then one has the inequality
\(\Prob(|n^{-1}X - p| > \varepsilon) \leq (1+c)^{-1},\)
provided $n$ is large enough, where $\varepsilon$ and $c$ are arbitrarily prescribed positive constants \citep{bernoulli2005law}. This is a typical \emph{weak law} statement --- although it was not until the time of Poisson that the name ``loi des grands nombres'' was coined \citep[p.7]{poisson1837recherches}. See \cite{seneta1992history,seneta2013history} for a compelling historical perspective on the law of large numbers, a history which culminated in `the' strong law for independent and identically distributed sequences, according to which the almost sure convergence of the sequence of sample means to the (common) expected value is equivalent to integrability. Also, still in the context of independent sequences, we highlight the importance of Kolmogorov's strong law for independent sequences whose partial sums have variances satisfying a summability condition.

Outside the realm of independence, things get trickier. As famously put by Michel Loève~\citep[p.6]{loeve1973paullevy}, ``martingales, Markov dependence and stationarity are the only three dependence concepts so far isolated which are sufficiently general and sufficiently amenable to investigation, yet with a great number of deep properties''. The contemporary probabilist would likely add uncorrelatedness, $m$-dependence, exchangeability and mixing properties to that list. In any case, the ways through which \emph{independence} may fail to hold are manifold, and thus one might infer that \emph{dependence} is too wide a concept, which means we should not expect to easily obtain a characterization of the law of large numbers for dependent sequences. Indeed, there are many scenarios where one can give \emph{sufficient} conditions under which a law of large numbers holds for such sequences --- to cite just a few examples: the weak law for pairwise uncorrelated sequences of random variables; the strong law for mixing sequences \citep{kuczmaszewska2011strong,kontorovich2014strong}; the strong law for exchangeable sequences \citep{taylor1987laws}; some very interesting results concerning decay of correlations (see, for example, \citep{Hu}) --- but, to the best of our knowledge, {no} \emph{characterization} has been provided so far\footnote{It is well known that the problem can be translated --- although not \emph{ipsis litteris} --- to the language of Ergodic Theory, and there are many characterizations of ergodicity of a dynamical system. The law of large numbers for stationary sequences is indeed implied by the Ergodic Theorem, but the converse implication does not hold in general.}. In this paper, we provide one such characterization for sequences of identically distributed Bernoulli random variables, in terms of the concept of a \emph{product disintegration}. Our main result shows that, to a certain degree, independence is an inextricable aspect of the law of large numbers.

Our conceptualization derives from --- and generalizes --- the notion of an \emph{exchangeable} sequence of random variables, to which we shall recall the precise definition shortly. First, let us get back to heuristics. The intuition underlying the coin-throwing situation depicted above remains essentially the same if we assume that, before fabricating the coin, the parameter of unbalancedness will be chosen at random in the interval $[0,1]$. In this case, conditionally on the value of the randomly chosen $\vartheta$ (let us say that the realized value is $\theta$), the long run proportion of heads definitely ought to approach $\theta$. The natural follow-up is to consider the not so evident scenario in which we choose at random (possibly distinct) parameters of unbalancedness $\vartheta_0,\dots,\vartheta_n, \dots$ and then, given a realization of these random variables (say, $\theta_0,\dots,\theta_n,\dots$), we fabricate distinct coins accordingly, that is, each corresponding to one of the sampled parameters of unbalancedness, and then sequentially throw them, independently from one another. Our main result implies that, if the sequence $(\vartheta_n)$ is stationary and satisfies a law of large numbers, then the long run proportion of heads in the latter scenario will approach $\E\vartheta_0$. Moreover, we show that the converse is also true: if a stationary sequence of coin throws has the property that the proportion of heads in the first $n$ throws approaches, with certainty, the parameter of unbalancedness, then the coin throws are conditionally independent, where the conditioning is on a sequence of \emph{random} parameters of unbalancedness satisfying themselves a law of large numbers.

As a byproduct stemming from our effort to provide a rigorous proof to Theorem~\ref{thm:LLN-characterization}, we developed the framework of \emph{product disintegrations}, which provides a model for sequences of random variables that are conditionally independent --- but not necessarily identically distributed --- thus being a generalization of exchangeability. In this context, we highlight the importance of Theorem~\ref{thm:prop1}, which constitutes the fundamental step in proving Theorem~\ref{thm:LLN-characterization} and also yields several examples that illustrate applications of both mathematical and statistical interest.

The paper is organized as follows. In the next section we state our main result, Theorem~\ref{thm:LLN-characterization}, and provide some heuristics connecting our conceptualization to the theory of \emph{exchangeable} sequences of random variables and to de Finetti's Theorem. In section~\ref{sec:general-theory}, we develop the theory in a slightly more general framework, introducing the concept of a \emph{product disintegration} as a generalization of exchangeability. We then state and prove our auxiliary results, of which Theorem~\ref{thm:LLN-characterization} is an immediate corollary. Section~\ref{sec:examples} provides a few examples.

\section{Main result and its relation to exchangeability}
We now state our main result. The proof is postponed to section \ref{sec:general-theory}.

\begin{theorem}\label{thm:LLN-characterization}
Let $\bs{X}\coloneqq\left(X_0,X_1,\dots\right)$ be a sequence of Bernoulli$(p)$ random variables, where $0\leq p\leq1$. Then one has
\begin{equation}\label{eq:LLN}
\lim_{n\to\infty}\frac{1}{n}\sum_{i=0}^{n-1} X_i = p,\qquad\mbox{almost surely}
\end{equation}
if and only if there exists a sequence $\bs\vartheta = (\vartheta_0, \vartheta_1,\dots)$ of random variables taking values in the unit interval such that:
\begin{enumerate}
\item almost surely, for all $n\geq0$ and all $x_0, x_1,\dots,x_n \in \{0,1\}$ one has
\begin{equation}\label{eq:bernoulli-product-disintegration}
\Prob(X_0 = x_0, \dots, X_n = x_n\given\bs\vartheta) = \prod_{i=0}^{n} \vartheta_i^{x_i}(1-\vartheta_i)^{1-x_i},
\end{equation}
and
\item almost surely, it holds that
\begin{equation}\label{eq:LLN-xi}
\lim_{n\to\infty}\frac1n\sum_{i=0}^{n-1} \vartheta_i = p.
\end{equation}
\end{enumerate}
\end{theorem}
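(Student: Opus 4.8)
The statement is an equivalence, and I would treat its two implications quite asymmetrically, since the forward one (SLLN $\Rightarrow$ existence of $\bs\vartheta$) is essentially immediate: taking $\vartheta_i \coloneqq X_i$ for every $i$ already works. Indeed, then $\sigma(\bs\vartheta) = \sigma(\bs X)$, so every cylinder event $\{X_0 = x_0,\dots,X_n = x_n\}$ is $\sigma(\bs\vartheta)$-measurable and, on choosing the obvious versions of the conditional probabilities, the left-hand side of \eqref{eq:bernoulli-product-disintegration} is the indicator of that event; meanwhile, since each $\vartheta_i$ takes values in $\{0,1\}$, the right-hand side $\prod_{i=0}^{n}\vartheta_i^{x_i}(1-\vartheta_i)^{1-x_i}$ equals $\prod_{i=0}^n \mathbf 1\{X_i = x_i\}$, i.e.\ the same indicator. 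So \eqref{eq:bernoulli-product-disintegration} holds for each fixed $(n,x_0,\dots,x_n)$ and hence, this being a countable family, simultaneously for all of them; and \eqref{eq:LLN-xi} is nothing but the hypothesis \eqref{eq:LLN}. The whole content of the theorem thus lies in the converse.

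For the converse I would assume \eqref{eq:bernoulli-product-disintegration}--\eqref{eq:LLN-xi} and aim to prove $n^{-1}\sum_{i=0}^{n-1}(X_i - \vartheta_i) \to 0$ almost surely, which, added to \eqref{eq:LLN-xi}, gives \eqref{eq:LLN}. The key observation is that \eqref{eq:bernoulli-product-disintegration} says exactly that, conditionally on $\bs\vartheta$, the $X_i$ are independent with $X_i$ Bernoulli$(\vartheta_i)$; so, writing $\mathcal F_n \coloneqq \sigma(\bs\vartheta, X_0,\dots,X_{n-1})$, one gets $\E[X_n \mid \mathcal F_n] = \E[X_n \mid \bs\vartheta] = \vartheta_n$ and $\E[(X_n - \vartheta_n)^2 \mid \mathcal F_n] = \vartheta_n(1-\vartheta_n) \le \tfrac14$, so that $S_n \coloneqq \sum_{i=0}^{n-1}(X_i - \vartheta_i)$ is an $(\mathcal F_n)$-martingale with $\sum_{n\ge0}\E[(X_n-\vartheta_n)^2 \mid \mathcal F_n]/(n+1)^2 \le \sum_{n\ge0}(4(n+1)^2)^{-1} < \infty$; the strong law of large numbers for such martingales (the Kolmogorov-type $\sum\Var/n^2$ criterion) then yields $S_n/n \to 0$ a.s. An alternative route avoids martingales: for the law $\mu$ of $\bs\vartheta$ and $\mu$-a.e.\ $\bs\theta$, the regular conditional law $\Prob(\,\cdot\mid\bs\vartheta=\bs\theta)$ makes the $X_i$ independent with variances $\le\tfrac14$, so Kolmogorov's classical SLLN gives $n^{-1}\sum_{i<n}(X_i-\theta_i)\to 0$ a.s.\ under it; combining this with \eqref{eq:LLN-xi} (which for $\mu$-a.e.\ $\bs\theta$ is merely the statement that the deterministic sequence $\bs\theta$ has Ces\`aro limit $p$) and integrating the conditional almost-sure statement against $\mu$ finishes the argument.

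I do not expect a serious analytic obstacle; the only real care is measure-theoretic bookkeeping, namely legitimately extracting from the product-disintegration identity \eqref{eq:bernoulli-product-disintegration} the facts $\E[X_n \mid \mathcal F_n] = \vartheta_n$ and $\E[(X_n-\vartheta_n)^2 \mid \mathcal F_n] = \vartheta_n(1-\vartheta_n)$ (equivalently, that the regular conditional laws of $\bs X$ given $\bs\vartheta$ are the asserted products), together with the innocuous passage between ``for each $(n,\bs x)$, almost surely'' and ``almost surely, for all $(n,\bs x)$'' (harmless by countability, and anyway already built into the wording of \eqref{eq:bernoulli-product-disintegration}). I would expect the cleanest way to organize precisely this bookkeeping to be what the paper's general framework of \emph{product disintegrations}, and Theorem~\ref{thm:prop1} in particular, is designed to deliver; but for the Bernoulli statement itself the two short arguments above suffice, with a classical $\sum\Var/n^2<\infty$ strong law carrying all the weight.
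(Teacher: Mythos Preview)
Your proposal is correct and matches the paper's proof closely. For the `only if' direction, your choice $\vartheta_i = X_i$ is exactly the paper's canonical product disintegration ($\xi_n = \delta_{X_n}$ gives $\vartheta_n = \xi_n\{1\} = X_n$). For the `if' direction, your second alternative---condition on $\bs\vartheta = \bs\theta$, apply Kolmogorov's SLLN to the resulting independent sequence with uniformly bounded variances, then integrate---is precisely the content and proof strategy of the paper's Theorem~\ref{thm:prop1}, with the standard construction and the disintegration Theorem~\ref{thm:bogachev-rcp} supplying the measure-theoretic bookkeeping you correctly flag. Your first alternative, the martingale route via $\mathcal F_n = \sigma(\bs\vartheta, X_0,\dots,X_{n-1})$, is a genuinely different and more elementary path that the paper does not take: it bypasses regular conditional distributions entirely and delivers $S_n/n\to0$ directly from the martingale SLLN. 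Both routes rest on the same $\sum \Var/n^2<\infty$ mechanism; the paper's conditional-Kolmogorov route buys the more general Theorem~\ref{thm:prop1} (arbitrary compact $S$, arbitrary $f\in C(S)$), while your martingale argument is shorter and self-contained for the Bernoulli case at hand.
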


\begin{remark}
The above theorem says that a sequence of coin throws has the property that the proportion of heads in the first $n$ throws approaches, with certainty, the ``parameter of unbalancedness'' $p\in[0,1]$ \emph{if and only if} the coin throws are conditionally independent, where the conditioning is on a sequence of \emph{random} parameters of unbalancedness whose corresponding sequence of sample means converges to $p$. Thus, for sequences of identically distributed Bernoulli$(p)$ random variables, the strong law of large numbers holds precisely when the experiment can be described as the outcome of a two-step mechanism, in which the first step encapsulates dependence and convergence of the sample means, whereas in the second step the random variables are realized in an independent manner.
\end{remark}

The conditional independence expressed in equation~\eqref{eq:bernoulli-product-disintegration} is closely related to the notion of \emph{exchangeability}. Recall that a sequence $\bs{X} \coloneqq (X_0, X_1,\dots)$ of random variables is said to be \defin{exchangeable} iff for every $n\geq1$ and every permutation $\sigma$ of $\{0,\dots,n\}$ it holds that the random vectors $(X_0,\dots,X_n)$ and $(X_{\sigma(0)},\dots, X_{\sigma(n)})$ are equal in distribution. An important characterization of exchangeability, de Finetti's Theorem states that a necessary and sufficient condition for a sequence of random variables to be exchangeable is that it is {\it conditionally independent and identically distributed}. To be precise, in the context of a sequence $\bs{X} \coloneqq (X_0, X_1,\dots)$ of Bernoulli$(p)$ random variables, exchangeability is equivalent to existence of a random variable $\vartheta$ taking values in the unit interval such that, almost surely, for all $n\geq0$ and all $x_0,\dots,x_n \in \{0,1\}$ one has
\begin{equation}\label{eq:productmeasure}
\Prob\left(X_0 = x_0,\dots,X_n = x_n\given\vartheta\right) = \prod_{i=0}^{n} \vartheta^{x_i}(1-\vartheta)^{1-x_i}.
\end{equation}
Moreover, $\vartheta$ is almost surely unique and given by $\vartheta = \lim_{n\to\infty} n^{-1}\sum_{i=0}^{n-1} X_i$.
In fact, the above equivalence holds with greater generality --- see \citep[Theorem~11.10]{kallenberg2002foundations}.

In view of de Finetti's Theorem, one is tempted to ask what happens when the random product measure \eqref{eq:productmeasure} characterizing exchangeable sequences --- whose factors are all the same random probability measure --- is substituted by an arbitrary random product measure (whose factors are not necessarily the same). This led us to introduce the concept of a \emph{product disintegration}, which we develop below, and which ultimately provided us with the framework yielding Theorem~\ref{thm:LLN-characterization}.

\section{General theory and proof of Theorem \ref{thm:LLN-characterization}}\label{sec:general-theory}
We now proceed to developing a  slightly more general theory --- one that will lead us to Theorem~\ref{thm:prop1}, of which Theorem~\ref{thm:LLN-characterization} is a corollary. Let us begin by establishing some terminology and notation. \textbf{In all that follows, $S$ is a compact, metrizable space}. We let $M_1(S)$ denote the set of Borel probability measures on $S$. The former is itself a compact metrizable space when endowed with the topology of weak* convergence --- according to which a sequence $(\mu_n)$ of probability measures converges to a given $\mu\in M_1(S)$ if and only if $\int f(x)\,\mu_n(\dd x) \to \int f(x)\, \mu(\dd x)$, for each continuous function $f\colon S\to\R$. In particular $M_1(S)$ admits a Borel $\sigma$-field --- see Theorem~\ref{thm:bogachev-8-3-2}. If $(\Omega, \mathscr{F}, \Prob)$ is a probability space and $\xi\colon\Omega \rightarrow M_1(S)$ is a Borel measurable mapping, we call $\xi$ a \defin{random probability measure on $S$}, whose value (which is a fixed probability measure) at a point $\omega\in\Omega$ we shall denote by $\xi^\omega$ and $\xi(\omega,\cdot)$ interchangeably. $\mathrm{Meas}_b(S)$ denotes the space of measurable, bounded maps from $S$ to $\R$, and $C\left(S\right)$ denotes the subspace of $\mathrm{Meas}_b(S)$ comprised of continuous maps from $S$ to $\R$. Given $f\in \mathrm{Meas}_b(S)$ and $\mu\in M_1\left(S\right)$ we shall write $\int f(x)\,\mu(\dd x)$, $\mu\left(f\right)$ and $\hat{f}(\mu)$ interchangeably. If $\xi$ is a random probability measure on $S$, the \defin{baricenter of $\xi$} is defined as the unique element $\E\xi\in M_1\left(S\right)$ such that the equality 
\(
\int_\Omega \int_S f(x)\xi^\omega(\dd x)\Prob(\dd \omega) = \int_S f(x) \E\xi(\dd x)
\) 
holds for all $f\in C\left(S\right)$. The baricenter $\E\xi$ is also known as the \defin{Pettis integral of $\xi$ with respect to $\Prob$}, or as the \defin{$\Prob$-expectation of $\xi$}, and its existence is guaranteed by the Riesz-Markov Theorem~\ref{thm:riesz-markov}. As usual, we write $\Prob_Y$ for the \emph{distribution} of a random variable $Y$ with values in a measurable space $M$, that is, $\Prob_Y(B) = P(Y\in B)$, for any measurable subset $B\subseteq M$. In what follows $\N$ denotes the set of nonnegative integers.

\begin{definition}[Product Disintegration]\label{def:product-disintegration-simple-version}
Let $\bs{X} \coloneqq \left(X_0,X_1,\dots\right)$ be a sequence of random variables taking values in a compact metric space $S$. We say that a sequence $\bs{\xi}\coloneqq\left(\xi_0, \xi_1,\dots\right)$ of random probability measures on $S$ is a \defin{product disintegration of $\bs{X}$} iff, with probability one, the equality
\begin{equation}\label{eq:weaker-product-measure-disintegration}
\Prob\left[X_0\in A_0,\dots,X_n\in A_n\given\bs{\xi}\right] = \xi_0\left(A_0\right)\cdots\xi_n\left(A_n\right)
\end{equation}
holds  for each $n\in\N$ and each family $A_0,\dots,A_n$ of measurable subsets of $S$. If $\bs{\xi}$ is a stationary sequence, then we say that $\bs{\xi}$ is a \defin{stationary product disintegration}.
\end{definition}

The definition above says that, conditionally on $\bs\xi$, the sequence $\bs{X} \coloneqq \left(X_0,X_1,\dots\right)$ is independent --- or, to be more precise, that for almost all elementary outcome $\omega$ in the sample space, it holds that the conditional probability $\Prob(\bs X\in \cdot\given\bs\xi)_\omega$ is a product measure on $S^\N$. See the \emph{standard construction} below for more details, {where a justification for the terminology \emph{disintegration} is provided}. Also, notice that if $\bs\xi$ is stationary, then clearly $\bs X$ is stationary as well.

The following result is an important characterization of product disintegrations. It allows us to work with the seemingly weaker requirement that the identity~\eqref{eq:weaker-product-measure-disintegration} hold only on a set $\Omega[n; A_0,\dots, A_n]$ having $\Prob$-measure $1$, for each $n\in\N$ and each family $A_0,\dots,A_n$ of measurable subsets of $S$.

\begin{lemma}\label{thm:product-measure-disintegration}
Let $\bs{X} \coloneqq \left(X_0,X_1,\dots\right)$ be a sequence of random variables taking values in a compact metric space $S$, and let $\bs{\xi} = (\xi_0,\xi_1,\dots)$ be a sequence of random probability measures on $S$. Then $\bs\xi$ is a product disintegration of $\bs X$ if and only if for each $n$ and each $(n+1)$-tuple $A_0,\dots,A_n$ of measurable subsets of $S$, the equality \eqref{eq:weaker-product-measure-disintegration} holds almost surely.
\end{lemma}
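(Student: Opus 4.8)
The plan is to dispatch the ``only if'' direction in one line and then spend all the effort on the converse. For ``only if'', note that a single $\Prob$-null set off of which \eqref{eq:weaker-product-measure-disintegration} holds for \emph{all} $n$ and \emph{all} families $A_0,\dots,A_n$ works, in particular, for each individual $n$ and each individual $(n+1)$-tuple; so there is nothing to do there.

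For the converse I would first fix, once and for all, a regular conditional distribution $\kappa$ of $\bs X$ given $\bs\xi$; this exists because $S^\N$ is Polish, and for every $\omega$ it makes $\kappa(\omega,\cdot)$ an honest probability measure on $S^\N$, with $\kappa\bigl(\omega,\{x: x_0\in A_0,\dots,x_n\in A_n\}\bigr)$ serving, for each fixed $n$ and each fixed Borel tuple, as a version of the left-hand side of \eqref{eq:weaker-product-measure-disintegration}. Next, using that $S$ is compact metrizable and hence second countable, I would pick a \emph{countable} $\pi$-system $\mathcal A\subseteq\mathcal B(S)$ that contains $S$ and generates $\mathcal B(S)$ (e.g.\ the finite intersections of a countable base, together with $S$ itself). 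By the hypothesis, to each $n$ and each tuple $(A_0,\dots,A_n)\in\mathcal A^{n+1}$ there corresponds a $\Prob$-null set $N[n;A_0,\dots,A_n]$ off of which \eqref{eq:weaker-product-measure-disintegration} holds with its left side computed through $\kappa$; since $\mathcal A$ is countable, the union $N$ of all these sets, over $n\in\N$ and over $\mathcal A^{n+1}$, is still $\Prob$-null.

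The heart of the argument is then a pointwise uniqueness-of-measures step. Fix $\omega\notin N$ and $n\in\N$. The set functions
\[
B\longmapsto \kappa\bigl(\omega,\{x\in S^\N:(x_0,\dots,x_n)\in B\}\bigr)
\qquad\text{and}\qquad
B\longmapsto \bigl(\xi_0^\omega\otimes\cdots\otimes\xi_n^\omega\bigr)(B)
\]
are Borel probability measures on $S^{n+1}$ that, by construction of $N$, agree on the $\pi$-system of rectangles $\{A_0\times\cdots\times A_n:A_i\in\mathcal A\}$ --- which contains $S^{n+1}$ and generates $\mathcal B(S)^{\otimes(n+1)}=\mathcal B(S^{n+1})$, the last equality because $S$ is second countable. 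By the uniqueness part of Dynkin's $\pi$-$\lambda$ theorem the two measures coincide on all of $\mathcal B(S^{n+1})$; evaluating them on $A_0\times\cdots\times A_n$ for \emph{arbitrary} Borel sets $A_0,\dots,A_n\subseteq S$ recovers \eqref{eq:weaker-product-measure-disintegration} at $\omega$. As $\omega\notin N$ and $n$ were arbitrary, the single null set $N$ witnesses that $\bs\xi$ is a product disintegration of $\bs X$.

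I expect the only genuinely delicate point to be the very first step. The conditional probabilities in \eqref{eq:weaker-product-measure-disintegration} are \emph{a priori} defined only up to $\Prob$-null sets, so before one can even speak of their values ``at $\omega$'' simultaneously across an uncountable family of tuples one must commit to a regular conditional distribution; after that, everything reduces to the standard maneuver of collapsing uncountably many almost-sure identities onto a countable generating subfamily and invoking uniqueness of measures. A secondary technical point worth stating carefully is the identity $\mathcal B(S)^{\otimes(n+1)}=\mathcal B(S^{n+1})$, which is where separability of $S$ is really used.
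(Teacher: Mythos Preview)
Your proposal is correct and follows essentially the same route as the paper: fix a regular conditional distribution of $\bs X$ given $\bs\xi$, collapse the hypothesis to a countable generating $\pi$-system (built from a countable base of the second-countable space $S$), take the countable union of the associated null sets, and apply the $\pi$--$\lambda$ uniqueness theorem pointwise off that union. The only cosmetic difference is that the paper carries out the uniqueness step directly on $S^{\N}$ (comparing $\Prob(\bs X\in\cdot\mid\bs\xi)_\omega$ with the infinite product $\prod_{n\ge0}\xi_n^\omega$), whereas you do it on each $S^{n+1}$ separately; the two formulations are clearly equivalent.
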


\begin{proof}
The `only if' part of the statement is trivial. For the `if' part, let $\mathscr{S}^\N$ denote the product $\sigma$-field on $S^\N$. By Lemma~\ref{thm:product-topology}, $\mathscr{S}^\N$ coincides with the Borel $\sigma$-field corresponding to the product topology on $S^\N$, and therefore $S^\N$ is a Borel space. By Theorem~\ref{thm:regular-conditional-distribution}, there exists an event $\Omega^*\subseteq\Omega$ with $\Prob(\Omega^*)=1$ such that
\(
\bs{A}\mapsto \Prob(\bs{X}\in\bs{A}\given\bs{\xi})_\omega
\)
is a probability measure on $\mathscr{S}^\N$ for each $\omega\in\Omega^*$.

Now let $\mathfrak{C}\coloneqq\{\bs{A}_k\colon\,k\in\mathbb{N}\}$ be a countable collection of sets of the form $\bs{A}_k = B_0^k\times\cdots\times B_{n(k)}^k\times S\times\cdots$ which generates $\mathscr{S}^\N$ (see Corollary~\ref{thm:subbasis}). By assumption, for each $k$ there is an event $\Omega_k\subseteq \Omega$ with $\Prob(\Omega_k)=1$ such that
\(
\Prob(\bs{X}\in\bs{A}_k\given\bs{\xi})_\omega = \xi_0^\omega(B_0^k)\cdots\xi_{n(k)}^\omega(B_{n(k)}^k)
\)
holds for $\omega\in\Omega_k$. Thus, for $\omega\in \Omega'\coloneqq \left(\bigcap_{k=0}^\infty \bs{A}_k\right)\cap \Omega^*$, with $\Prob(\Omega')=1$, the probability measures $\Prob(\bs{X}\in\cdot\given\bs{\xi})_\omega$ and $\prod_{n=0}^\infty\xi_n^\omega$ agree on a $\pi$-system which generates $\mathscr{S}^\N$, and therefore they agree on $\mathscr{S}^\N$. This establishes the stated result.
\end{proof}

Now we prove that product disintegrations always exist:

\begin{lemma}\label{thm:product-disintegration-existence}
Any sequence $\bs{X}\coloneqq (X_0, X_1,\dots)$ of $S$-valued random variables admits a product disintegration.
\end{lemma}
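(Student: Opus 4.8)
The plan is to exhibit an explicit product disintegration by letting the factors be the random Dirac measures $\xi_i\coloneqq\delta_{X_i}$. Precisely, on the underlying probability space $(\Omega,\mathscr{F},\Prob)$ carrying $\bs{X}$, define for each $i\in\N$ the map $\xi_i\colon\Omega\to M_1(S)$ by $\xi_i(\omega)\coloneqq\delta_{X_i(\omega)}$, and set $\bs\xi\coloneqq(\xi_0,\xi_1,\dots)$. The claim to be verified is that this $\bs\xi$ is a product disintegration of $\bs{X}$ in the sense of Definition~\ref{def:product-disintegration-simple-version}.

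First I would check that each $\xi_i$ is a bona fide random probability measure, i.e.\ Borel measurable from $\Omega$ into $M_1(S)$ equipped with the weak* topology. This follows by factoring $\xi_i=\iota\circ X_i$, where $\iota\colon S\to M_1(S)$, $x\mapsto\delta_x$; since $\hat f(\delta_x)=f(x)$ for every $f\in C(S)$, the map $\iota$ is continuous, hence Borel, and $X_i$ is measurable by hypothesis. Next I would establish that $\sigma(\bs\xi)=\sigma(\bs{X})$: on one hand $\xi_i$ is $\sigma(X_i)$-measurable by construction; on the other hand, for every Borel $A\subseteq S$ one has $\{X_i\in A\}=\{\xi_i(A)=1\}=\xi_i^{-1}\bigl(\{\mu\in M_1(S)\colon\mu(A)=1\}\bigr)$, and $\{\mu\colon\mu(A)=1\}$ is a Borel subset of $M_1(S)$ because the evaluation map $\mu\mapsto\mu(A)$ is Borel measurable (this is part of why $M_1(S)$ carries a well-behaved Borel $\sigma$-field; cf.\ Theorem~\ref{thm:bogachev-8-3-2}). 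Consequently, conditioning on $\bs\xi$ coincides with conditioning on $\bs{X}$, so that for any fixed $n\in\N$ and any measurable $A_0,\dots,A_n\subseteq S$,
\[
\Prob\left[X_0\in A_0,\dots,X_n\in A_n\given\bs\xi\right]
= \prod_{i=0}^{n}\I\{X_i\in A_i\} = \prod_{i=0}^{n}\xi_i(A_i)\qquad\text{almost surely},
\]
the first equality being simply the defining property of the conditional expectation of the indicator of a $\sigma(\bs{X})$-measurable event. Since this holds almost surely for each fixed tuple $(A_0,\dots,A_n)$, Lemma~\ref{thm:product-measure-disintegration} upgrades it to the full requirement of Definition~\ref{def:product-disintegration-simple-version}, which concludes the proof.

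There is no deep obstacle here; the only point demanding care is the measurability bookkeeping in the second step — in particular the fact that the evaluation maps $\mu\mapsto\mu(A)$ on $M_1(S)$ are Borel, which is what forces $\sigma(\bs\xi)=\sigma(\bs{X})$ and hence lets us trade conditioning on $\bs\xi$ for conditioning on $\bs{X}$. It bears emphasizing that this ``degenerate'' product disintegration carries no information whatsoever: the interest of the notion (and the genuine difficulty in what follows) lies in producing product disintegrations with extra structure — e.g.\ \emph{stationary} ones, or ones for which $\sigma(\bs\xi)$ is strictly coarser than $\sigma(\bs{X})$ — which is precisely the content of Theorem~\ref{thm:prop1}. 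One could alternatively attempt to construct $\bs\xi$ by disintegrating $\Prob$ along some sub-$\sigma$-field on an enlarged space, but for the bare existence statement the Dirac construction above is the most economical route.
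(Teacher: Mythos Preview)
Your proof is correct and follows essentially the same route as the paper: both define $\xi_i\coloneqq\delta_{X_i}$, check that the product $\prod_i\xi_i(A_i)=\I_{[X_0\in A_0,\dots,X_n\in A_n]}$ is a version of the conditional probability given $\sigma(\bs\xi)$, and then invoke Lemma~\ref{thm:product-measure-disintegration}. The only cosmetic difference is that you pass through the identity $\sigma(\bs\xi)=\sigma(\bs X)$ explicitly, whereas the paper verifies $\sigma(\bs\xi)$-measurability of the product and the defining integral directly; also, the correct cross-reference for Borel measurability of $\mu\mapsto\mu(A)$ is Theorem~\ref{thm:KAL-A2-3} rather than Theorem~\ref{thm:bogachev-8-3-2}.
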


\begin{proof}
For $n\in\N$ and $\omega\in\Omega$, let $\xi_n^\omega = \delta_{X_n(\omega)}$, where $\delta_x$ is the Dirac measure at $x\in S$. Now fix $n\in\N$ and let $A_0,\dots,A_n$ be measurable subsets of $S$. We first prove that the map
\begin{equation}\label{eq:canonical-disitegration-is-measurable}
\omega\mapsto \xi_0^\omega\left(A_0\right)\cdots\xi_n^\omega\left(A_n\right)\equiv \I_{\left[X_0\in A_0,\dots,X_n\in A_n\right]}\left(\omega\right)
\end{equation}
is $\sigma\left(\bs{\xi}\right)$-measurable and integrable: by Theorem~\ref{thm:KAL-A2-3}, the maps $f_{A_i}\colon M_1(S)\to \R$ defined by $f_{A_i}(\mu)\coloneqq\mu(A_i)$, are measurable and thus, by the Doob-Dynkin Lemma~\ref{thm:doob-dynkin}, the map $\omega \mapsto \xi_i^\omega(A_i) = f_{A_i}\circ\xi_i(\omega)$ is measurable with respect to $\sigma(\xi_i)\subseteq\sigma(\bs{\xi})$. Thus \eqref{eq:canonical-disitegration-is-measurable} defines a $\sigma(\bs{\xi})$-measurable map, as stated. Moreover, for $B\in\sigma\left(\bs{\xi}\right)$ we have
\begin{equation*}
\E\left\{\xi_0\left(A_0\right)\cdots\xi_n\left(A_n\right)\I_B\right\} = \E\left\{\I_{\left[X_0\in A_0,\dots,X_n\in A_n, B\right]}\right\} = \Prob\left\{X_0\in A_0,\dots,X_n\in A_n, B\right\},
\end{equation*}
and therefore $\xi_0\left(A_0\right)\cdots\xi_n\left(A_n\right)$ is a version of $\Prob\left[X_0\in A_0,\dots,X_n\in A_n\given\bs{\xi}\right]$. Now it is only a matter of applying Lemma~\ref{thm:product-measure-disintegration}.
\end{proof}

We shall call the sequence $\bs\delta = \left(\delta_{X_0},\delta_{X_1},\dots\right)$ appearing in the above lemma the \defin{canonical product disintegration of $\bs{X}$}. Notice, in particular, that product disintegrations are not unique (see Example~\ref{example:non-unique}). Also, it is clear that stationarity of $\bs X$ entails stationarity of $\bs\delta$.

\textbf{We now argue that}, without loss of generality, one can take the underlying probability space $\Omega$ to be the compact metric space $S^\N\otimes M_1(S)^\N$, endowed with its Borel $\sigma$-field $\mathscr{F}$, and equipped with the probability measure defined, for Borel subsets $\bs{A}\subseteq S^\N$ and $\bs{B}\subseteq M_1(S)^\N$, by
\begin{equation}\label{eq:kernel-equation}
\Prob(\bs{A}\times\bs{B}) = \int_{\bs{B}} \rho(\bs{\lambda},\bs{A})\,Q(\dd \bs{\lambda})
\end{equation}
where $Q$ is a probability measure defined on $M_1(S)^\N$ (that is, $Q\in M_1(M_1(S)^\N)$) and
\begin{equation*}
\rho(\bs{\lambda},\bs{A}) \coloneqq \left(\prod\nolimits_{i\in\N}\lambda_i\right)(\bs{A}),\qquad \bs\lambda\in M_1(S)^\N,\quad \bs A\subseteq S^\N\,\text{measurable}.
\end{equation*}
In this construction, the random variables $\bs{X}$ and $\bs{\xi}$ can be defined as projections by putting, for $\omega = (\bs{x},\bs{\lambda})\in\Omega$, $\bs{X}(\omega) \coloneqq \bs{x}$ and $\bs{\xi}(\omega) \coloneqq \bs{\lambda}$, where $\bs{x} = (x_0, x_1, \dots)$ and $\bs{\lambda} = (\lambda_0, \lambda_1,\dots)$. The next lemma ensures that, in the probability space $(\Omega, \mathscr{F},\Prob)$, indeed $\bs{\xi}$ is a product disintegration of $\bs{X}$, with $\Prob_{\bs{\xi}}=Q$. For convenience, we shall call this the \defin{standard construction}.

\begin{lemma}\label{thm:rho-is-measurable}
$\rho$ is a probability kernel from $M_1(S)^\N$ to $S^\N$.
\end{lemma}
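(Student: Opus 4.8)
The plan is to verify the two defining properties of a probability kernel: (i) for each fixed $\bs\lambda\in M_1(S)^\N$, the set function $\bs A\mapsto\rho(\bs\lambda,\bs A)$ is a probability measure on the product $\sigma$-field $\mathscr S^\N$; and (ii) for each fixed measurable $\bs A\subseteq S^\N$, the map $\bs\lambda\mapsto\rho(\bs\lambda,\bs A)$ is measurable with respect to the Borel $\sigma$-field on $M_1(S)^\N$. Property (i) is essentially immediate: $\rho(\bs\lambda,\cdot)$ is by definition the countable product measure $\bigl(\prod_{i\in\N}\lambda_i\bigr)$, whose existence and $\sigma$-additivity on $\mathscr S^\N$ is the classical Ionescu-Tulcea / product-measure construction, and it assigns mass $1$ to $S^\N$ since each $\lambda_i$ is a probability measure. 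So the entire content of the lemma is property (ii), the measurability of $\bs\lambda\mapsto\rho(\bs\lambda,\bs A)$.

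For (ii) I would use a monotone-class (Dynkin $\pi$--$\lambda$) argument. Let $\mathscr D$ be the collection of measurable $\bs A\subseteq S^\N$ for which $\bs\lambda\mapsto\rho(\bs\lambda,\bs A)$ is Borel measurable. First check that $\mathscr D$ contains every measurable rectangle (cylinder) $\bs A = B_0\times\cdots\times B_n\times S\times S\times\cdots$ with $B_i\in\mathscr S$: for such a set $\rho(\bs\lambda,\bs A)=\prod_{i=0}^n\lambda_i(B_i)$, and each factor $\bs\lambda\mapsto\lambda_i(B_i)$ is the composition of the coordinate projection $M_1(S)^\N\to M_1(S)$ (continuous, hence measurable) with the evaluation map $\mu\mapsto\mu(B_i)$, which is measurable by Theorem~\ref{thm:KAL-A2-3}; a finite product of measurable real-valued maps is measurable, so the rectangle lies in $\mathscr D$. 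The cylinders form a $\pi$-system generating $\mathscr S^\N$ (by Corollary~\ref{thm:subbasis} / Lemma~\ref{thm:product-topology}). Next verify that $\mathscr D$ is a $\lambda$-system: $S^\N\in\mathscr D$; if $\bs A\subseteq\bs A'$ are in $\mathscr D$ then $\rho(\bs\lambda,\bs A'\setminus\bs A)=\rho(\bs\lambda,\bs A')-\rho(\bs\lambda,\bs A)$ is measurable as a difference (finite since these are probabilities); and if $\bs A_k\uparrow\bs A$ with each $\bs A_k\in\mathscr D$, then $\rho(\bs\lambda,\bs A)=\lim_k\rho(\bs\lambda,\bs A_k)$ is measurable as a pointwise limit, using continuity from below of the measure $\rho(\bs\lambda,\cdot)$ established in (i). By Dynkin's theorem $\mathscr D\supseteq\sigma(\text{cylinders})=\mathscr S^\N$, which is the Borel $\sigma$-field on $S^\N$ by Lemma~\ref{thm:product-topology}. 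This gives (ii) and completes the proof.

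The only mildly delicate point — and the one place to be careful rather than glib — is the interface between the \emph{two} roles of measurability here: the product measure $\rho(\bs\lambda,\cdot)$ must be a genuine $\sigma$-additive measure on the \emph{product $\sigma$-field} (not merely finitely additive on cylinders), so that continuity from below is available in the $\lambda$-system step; and one must ensure the evaluation maps $\mu\mapsto\mu(B)$ are measurable for \emph{all} Borel $B$, not just open ones, which is exactly the content of Theorem~\ref{thm:KAL-A2-3}. Once those two inputs are in hand, the $\pi$--$\lambda$ argument is routine. I would also remark that compactness of $S$ is not really needed for this particular lemma — Borel-measurability of $S$ suffices — but since $S$ is assumed compact metrizable throughout, there is nothing extra to arrange.
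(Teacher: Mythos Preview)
Your proof is correct, but it takes a genuinely different route from the paper's. The paper argues that the map $\bs\lambda\mapsto\rho(\bs\lambda,\cdot)$ is in fact \emph{continuous} from $M_1(S)^\N$ to $M_1(S^\N)$: given $\bs\lambda^n\to\bs\lambda$ coordinatewise and an open cylinder $\bs A=A_0\times\cdots\times A_L\times S\times\cdots$, the Portmanteau theorem applied to each factor gives $\liminf_n\lambda_i^n(A_i)\ge\lambda_i(A_i)$, whence $\liminf_n\rho(\bs\lambda^n,\bs A)\ge\rho(\bs\lambda,\bs A)$, and continuity (hence measurability) follows. Your approach instead fixes $\bs A$ and runs a purely measure-theoretic $\pi$--$\lambda$ argument, checking measurability on rectangles via Theorem~\ref{thm:KAL-A2-3} and then extending by Dynkin's theorem. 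What each buys: the paper's argument yields the stronger conclusion of continuity and exploits the topological setup already in place; your argument is more elementary, avoids Portmanteau entirely, and --- as you correctly note --- does not actually require compactness of $S$, only that $S$ be a Borel space so that the evaluation maps $\mu\mapsto\mu(B)$ are measurable and the product $\sigma$-field coincides with the Borel $\sigma$-field on $S^\N$.
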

\begin{proof}
It is sufficient to prove that the map
\(\bs\lambda\mapsto\rho(\bs\lambda,\cdot)\equiv \prod_{i\in\N}\lambda_i\)
from $M_1(S)^\N$ to $M_1(S^\N)$ is measurable. Let $(\bs{\lambda^n})$ be a sequence in $M_1(S)^{\mathbb{N}}$, i.e., for each $n$, $\bs{\lambda^n} = (\lambda^n_0, \lambda^n_1, \dots)$ with $\lambda^n_i \in M_1(S)$, for each $i$, such that $\lim_{n \to \infty} \bs{\lambda^n} = \bs{\lambda} = (\lambda_0, \lambda_1, \dots) \in M_1(S)^{\mathbb{N}}$; that is, $\lim_{n \rightarrow +\infty} \lambda^n_i = \lambda_i$, for all $i$. Also, let $\bs A = A_0 \times A_1 \times \dots \times A_L \times S \times S \times \dots$ be an open set in $S^{\N}$. Since $\lim_{n \rightarrow +\infty} \lambda^n_i = \lambda_i$, we know, by the Portmanteau Theorem, that $\liminf_{n\rightarrow+\infty} \lambda^n_i (A_i) \geq \lambda_i(A_i)$. Now,
\(
\rho\left(\bs{\lambda}^n, \bs A\right) = \left(\prod_{j \in\N}\lambda^n_j\right)(\bs A) = \prod_{j=0}^{L} \lambda^n_j(A_j).
\)
This implies
\begin{equation*}
\liminf_{n \to +\infty} \rho\left(\bs{\lambda}^n, \bs A\right) = \liminf_{n \to +\infty} \prod_{j=0}^{L} \lambda^n_j(A_j) = \prod_{j =0}^{L} \liminf_{n \to +\infty} \lambda^n_j(A_j) \geq \prod_{j=0}^{L} \lambda_j(A_j) = \rho\left(\bs{\lambda}, \bs A\right),
\end{equation*}
which proves that $\bs\lambda\mapsto\rho(\bs\lambda,\cdot)$ is continuous and, \emph{a fortiori}, measurable.
\end{proof}

Interestingly, the standard construction evinces the fact that the joint law of a sequence of random variables with values in $S$ can \emph{always} be written as the baricenter of a random product measure on $S^\N$. Indeed, as product disintegrations always exist (Lemma~\ref{thm:product-disintegration-existence}), if we let $\bs{X} = (X_0, X_1,\dots)$ be such a sequence (and seeing $\bs X$ as a $S^\N$-valued random variable) with product disintegration $\bs\xi = (\xi_0,\xi_1,\dots)$, then, writing $\rho(\bs\lambda)\equiv\rho(\bs\lambda,\cdot)$,  we have
\begin{align*}
\Prob_{\bs{X}} = \E\left( \prod\nolimits_{n=0}^\infty \xi_n \right) = \int \rho\circ\bs{\xi}(\omega)\,\Prob(\dd\omega) = \int \rho(\bs{\lambda})\,\Prob_{\bs{\xi}}(\dd\bs{\lambda})
\end{align*}
and, of course, 
\(\Prob_{\bs\xi}\lbrace \bs\lambda\colon\,\text{$\rho(\bs\lambda)$ is a product measure}\rbrace = 1.\) Moreover, the standard construction justifies the adoption of the terminology \emph{product disintegration}; indeed, in this setting the family of probability measures $(\eta^\omega\colon \omega\in\Omega)$ defined on $(\Omega,\mathscr F)$ via
\[
\eta^\omega(\bs A\times\bs B) \coloneqq \rho\big(\bs\xi(\omega),\bs A\big)\,\I_{[\bs\xi\in\bs B]}(\omega)\equiv \Prob(\bs A\times\bs B\given \bs\xi)_\omega,
\]
for measurable sets $\bs A\subseteq S^\N$ and $\bs B\subseteq M_1(S)^\N$, provides a disintegration of $\Prob$ with respect to $\sigma(\bs\xi)$. See the definition 10.6.1 in \cite{bogachev2007measure} and also the proof of Theorem~\ref{thm:prop1} for more details.

Theorem~\ref{thm:LLN-characterization} is a direct consequence of Theorem~\ref{thm:prop1} below. The `if' part of this proposition is inspired by a similar result that has appeared --- albeit in a different framework --- in \citep[Theorem~1]{horta2018conjugate}.\footnote{The reasoning used by the authors in their proof is essentially the same as the one we apply here, although their statement corresponds to a weak law whereas ours is a strong law. We also made an effort to provide the measure theoretic details in the argument.} Its proof relies on the following \emph{disintegration theorem}.

\begin{theorem}\label{thm:bogachev-rcp}
Let $\Omega$ and $\Lambda$ be compact metric spaces, let $\Prob$ be a Borel probability measure on $\Omega$, and let $\bs\xi\colon\Omega\to \Lambda$ be a Borel mapping. Then there exists a collection $(\eta^{\bs\lambda}\colon\, \bs\lambda\in \Lambda)$ of Borel probability measures on $\Omega$ such that
\begin{enumerate}
\item the functions $\bs\lambda\mapsto\eta^{\bs\lambda}(E)$ are Borel measurable, for each measurable subset $E\subseteq \Omega$.
\item one has $\eta^{\bs\lambda} \{\omega:\,\bs\xi\left(\omega\right)\neq \bs\lambda\} = 0$, for every $\bs\lambda\in \mathrm{range}(\bs\xi)$.
\item for all measurable subsets $E\subseteq\Omega$ and $\bs L\subseteq \Lambda$ one has
\(
\Prob(E\cap\bs\xi^{-1}(\bs L)) = \int_{\bs L} \eta^{\bs\lambda}(E)\,\Prob_\xi(\dd \bs\lambda).
\)
\end{enumerate}
\end{theorem}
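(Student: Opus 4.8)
The plan is to obtain $(\eta^{\bs\lambda}\colon\bs\lambda\in\Lambda)$ as a regular conditional distribution of the identity map on $\Omega$ given the sub-$\sigma$-field $\sigma(\bs\xi)$, factored through $\bs\xi$, and then to make this kernel \emph{proper} so that property~(2) holds at every point of the range of $\bs\xi$. First, since $\Omega$ is compact metric it is a Borel space, so Theorem~\ref{thm:regular-conditional-distribution}, applied to the $\Omega$-valued random variable $\mathrm{id}_\Omega$, produces a map $\omega\mapsto\kappa^\omega\in M_1(\Omega)$, measurable with respect to $\sigma(\bs\xi)$, such that $\int_B\kappa^\omega(E)\,\Prob(\dd\omega)=\Prob(E\cap B)$ for every Borel $E\subseteq\Omega$ and every $B\in\sigma(\bs\xi)$. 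Because $\omega\mapsto\kappa^\omega$ is $\sigma(\bs\xi)$-measurable and $M_1(\Omega)$ is itself a compact metric --- hence Borel --- space, the Doob--Dynkin Lemma~\ref{thm:doob-dynkin} yields a Borel map $h\colon\Lambda\to M_1(\Omega)$ with $\kappa^\omega=h(\bs\xi(\omega))$ for all $\omega$, and I would set $\eta^{\bs\lambda}\coloneqq h(\bs\lambda)$.

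Properties~(1) and~(3) then come almost for free. For~(1), $\bs\lambda\mapsto\eta^{\bs\lambda}(E)$ is the composition of the Borel map $h$ with the evaluation $\mu\mapsto\mu(E)$, which is Borel measurable by Theorem~\ref{thm:KAL-A2-3}. For~(3), fix Borel sets $E\subseteq\Omega$ and $\bs L\subseteq\Lambda$; since $\bs\xi^{-1}(\bs L)\in\sigma(\bs\xi)$, the defining identity of $\kappa$ gives $\Prob\big(E\cap\bs\xi^{-1}(\bs L)\big)=\int_{\bs\xi^{-1}(\bs L)}\kappa^\omega(E)\,\Prob(\dd\omega)=\int_\Omega\I_{\bs L}(\bs\xi(\omega))\,\eta^{\bs\xi(\omega)}(E)\,\Prob(\dd\omega)$, and the change-of-variables formula for the image measure $\Prob_{\bs\xi}$ rewrites the last integral as $\int_{\bs L}\eta^{\bs\lambda}(E)\,\Prob_{\bs\xi}(\dd\bs\lambda)$, which is~(3).

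Property~(2) is the part that takes work. Let $\{U_k\}_{k\in\N}$ be a countable base for the topology of $\Lambda$. Applying~(3) with $E=\bs\xi^{-1}(\Lambda\setminus U_k)$ and $\bs L=U_k$ gives $\int_{U_k}\eta^{\bs\lambda}\big(\bs\xi^{-1}(\Lambda\setminus U_k)\big)\,\Prob_{\bs\xi}(\dd\bs\lambda)=\Prob\big(\bs\xi^{-1}(U_k)\cap\bs\xi^{-1}(\Lambda\setminus U_k)\big)=0$, so the Borel set $N_k\coloneqq\{\bs\lambda\in U_k\colon\eta^{\bs\lambda}(\bs\xi^{-1}(U_k))<1\}$ is $\Prob_{\bs\xi}$-null; put $N\coloneqq\bigcup_kN_k$, again Borel and $\Prob_{\bs\xi}$-null. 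If $\bs\lambda\notin N$, then $\eta^{\bs\lambda}(\bs\xi^{-1}(U_k))=1$ whenever $\bs\lambda\in U_k$; since $\Lambda$ is metric, $\{\bs\lambda\}=\bigcap\{U_k\colon\bs\lambda\in U_k\}$ is a countable intersection, so $\eta^{\bs\lambda}(\bs\xi^{-1}(\{\bs\lambda\}))=1$, i.e.\ $\eta^{\bs\lambda}\{\omega\colon\bs\xi(\omega)\neq\bs\lambda\}=0$. Thus~(2) already holds for every $\bs\lambda\in\mathrm{range}(\bs\xi)\setminus N$.

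The remaining step --- repairing $\eta^{\bs\lambda}$ on $N$ --- is the one I expect to be the main obstacle. For $\bs\lambda\in N\cap\mathrm{range}(\bs\xi)$ one wants to replace $\eta^{\bs\lambda}$ by a probability measure carried by the nonempty fibre $\bs\xi^{-1}(\{\bs\lambda\})$, say a Dirac mass at a point of it, and for $\bs\lambda\in N\setminus\mathrm{range}(\bs\xi)$ one may take any fixed Dirac mass; as $N$ is Borel and $\Prob_{\bs\xi}$-null, this alters nothing in~(3) and, if done measurably, nothing in~(1). The crux is to select that point inside each fibre measurably in $\bs\lambda$, i.e.\ to find a measurable selector for the multifunction $\bs\lambda\mapsto\bs\xi^{-1}(\{\bs\lambda\})$ over $\mathrm{range}(\bs\xi)$. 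When $\bs\xi$ is continuous --- which covers every application of this theorem in the paper, where $\bs\xi$ is a coordinate projection --- the fibres are closed and the Kuratowski--Ryll-Nardzewski selection theorem provides a Borel selector at once. For a general Borel $\bs\xi$ one appeals instead to the Jankov--von Neumann uniformization theorem; the selector it yields is a priori only universally measurable, and reconciling this with the Borel measurability in~(1) --- or, what is enough for every use of the statement, passing to the $\Prob_{\bs\xi}$-completion --- is precisely the delicate point, for which I would follow the treatment of conditional measures in \cite[\S 10.6]{bogachev2007measure}.
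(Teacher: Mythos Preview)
The paper's own proof is a one-line citation: it states that the result is a direct consequence of Proposition~10.4.12 in Bogachev's \emph{Measure Theory}. Your proposal, by contrast, unpacks the construction: build a regular conditional distribution via Theorem~\ref{thm:regular-conditional-distribution}, factor it through $\bs\xi$ by Doob--Dynkin, verify (1) and (3) directly, obtain (2) $\Prob_{\bs\xi}$-almost everywhere by the countable-base argument, and then repair on the exceptional null set via measurable selection. This is the standard route to disintegration theorems and your execution up to the repair step is correct; in particular your derivations of (1), (3), and (2)-a.e.\ are clean. You are also right that the repair step --- upgrading (2) from ``$\Prob_{\bs\xi}$-a.e.'' to ``for every $\bs\lambda\in\mathrm{range}(\bs\xi)$'' while retaining the Borel measurability in (1) --- is the only genuinely delicate point, and that for a merely Borel $\bs\xi$ it ultimately forces one back to the machinery in \cite[\S10.4--10.6]{bogachev2007measure} (note that $\mathrm{range}(\bs\xi)$ is in general only analytic, so even splitting $N$ into $N\cap\mathrm{range}(\bs\xi)$ and its complement is not a Borel operation). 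Your observation that in every use of this theorem within the paper $\bs\xi$ is a coordinate projection, hence continuous, so that fibres are closed and Kuratowski--Ryll-Nardzewski supplies a Borel selector outright, is a worthwhile simplification the paper does not make explicit. In short: the paper buys brevity by citing the full-strength result; your route buys transparency and isolates exactly where the difficulty sits, at the cost of eventually leaning on the same reference for the last step.
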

\begin{proof}
This is a direct consequence of Proposition~10.4.12 in \cite{bogachev2007measure}.
\end{proof}

\begin{remark}
In the context of the above theorem, it is commonplace to write $\eta^{\bs\lambda}(E)=:\Prob(E\given\bs\xi=\bs\lambda)$, in which case the above theorem yields the \defin{substitution principle}, $\Prob\{\omega\colon\,g(\omega,\bs\xi(\omega))=g(\omega,\bs\lambda)\given\bs\xi=\bs\lambda\} = 1$ for all $\bs\lambda\in\mathrm{range}(\bs\xi)$ and all measurable functions $g$ defined on $\Omega\times \Lambda$. The probability kernel appearing in the above theorem is essentially unique: indeed, if $(\eta_1^{\bs\lambda}\colon\bs\lambda\in\Lambda)$ is another such kernel, then it is easy to see that $\eta_1^{\bs\lambda} = \eta^{\bs\lambda}$ for $\bs\lambda$ on a set of total $\Prob_{\bs\xi}$-measure.
\end{remark}

\begin{theorem}\label{thm:prop1}
Let $\bs{X} = \left(X_0, X_1,\dots\right)$ be a sequence of $S$-valued random variables. Assume $\bs{\xi} = (\xi_0, \xi_1,\dots)$ is a product disintegration of $\bs{X}$, and let $f\in C(S)$. Then it holds that
\begin{equation}\label{eq:prop1}
\lim_{n\to\infty} \frac1n\sum_{i=0}^{n-1}\big(f\circ X_i - \xi_i(f)\big) = 0
\end{equation}
almost surely. In particular, the limit 
\[X_\infty(f)\coloneqq \lim_{n\to\infty} n^{-1}\sum_{i=0}^{n-1}f\circ X_i\] 
exists almost surely if and only if the limit 
\[\xi_\infty(f)\coloneqq \lim_{n\to\infty}n^{-1}\sum_{i=0}^{n-1}\xi_i(f)\] exists almost surely, in which case one has $X_\infty(f) = \xi_\infty(f)$ almost surely.
\end{theorem}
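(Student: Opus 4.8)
The plan is to prove the key identity \eqref{eq:prop1} --- that the Cesàro averages of $f\circ X_i - \xi_i(f)$ vanish almost surely --- and then deduce the ``in particular'' part by simple algebra, since $n^{-1}\sum_{i=0}^{n-1} f\circ X_i$ and $n^{-1}\sum_{i=0}^{n-1}\xi_i(f)$ differ by a sequence tending to $0$, so one converges iff the other does, to the same limit. So the whole content is the first display.

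To establish \eqref{eq:prop1}, I would work conditionally on $\bs\xi$ using the standard construction (or, what amounts to the same thing, the disintegration $(\eta^{\bs\lambda})$ of Theorem~\ref{thm:bogachev-rcp} with $\Lambda = M_1(S)^\N$). Fix $f\in C(S)$ and set $Y_i \coloneqq f\circ X_i - \xi_i(f)$. Conditionally on $\bs\xi = \bs\lambda$, the definition of a product disintegration says that $X_0, X_1,\dots$ are independent with $X_i$ distributed as $\lambda_i$; hence, under $\Prob(\cdot\given\bs\xi=\bs\lambda)$, the variables $Z_i \coloneqq f\circ X_i - \lambda_i(f)$ are independent, centered, and uniformly bounded by $2\|f\|_\infty$. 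By the substitution principle, $Y_i = Z_i$ a.s.\ under this conditional law, so it suffices to show $n^{-1}\sum_{i=0}^{n-1} Z_i \to 0$ $\Prob(\cdot\given\bs\xi=\bs\lambda)$-a.s.\ for $\Prob_{\bs\xi}$-almost every $\bs\lambda$. This is now a statement about independent, uniformly bounded, centered random variables, so Kolmogorov's strong law for independent sequences applies: $\sum_i \Var(Z_i)/i^2 \le \sum_i (2\|f\|_\infty)^2/i^2 < \infty$, whence $n^{-1}\sum_{i=0}^{n-1} Z_i \to 0$ almost surely. Finally, integrate over $\bs\lambda$: if $N$ is the $\Prob(\cdot\given\bs\xi=\bs\lambda)$-null set on which convergence fails, then $\Prob(N) = \int \Prob(N\given\bs\xi=\bs\lambda)\,\Prob_{\bs\xi}(\dd\bs\lambda) = 0$, giving \eqref{eq:prop1} unconditionally.

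The main obstacle --- and the reason the paper bothers with the ``standard construction'' and the disintegration theorem --- is making the conditioning rigorous: one must know that $\Prob(\cdot\given\bs\xi=\bs\lambda)$ genuinely makes $\bs X$ a product measure $\prod_i\lambda_i$ for $\Prob_{\bs\xi}$-a.e.\ $\bs\lambda$ (not merely that \eqref{eq:weaker-product-measure-disintegration} holds for each fixed cylinder off a null set), and that the event $\{\,\bs\lambda : n^{-1}\sum Z_i \to 0 \text{ under } \Prob(\cdot\given\bs\xi=\bs\lambda)\,\}$ is measurable so that the final integration is legitimate. Both points are handled by Lemma~\ref{thm:product-measure-disintegration} (upgrading the cylinder-by-cylinder identity to an identity of measures on $\mathscr S^\N$, using a countable generating $\pi$-system) together with Theorem~\ref{thm:bogachev-rcp} and its substitution principle; the measurability of the convergence set follows because it can be written in terms of countably many conditional expectations $\int(n^{-1}\sum_{i<n} Z_i)\,\eta^{\bs\lambda}(\dd\omega)$-type quantities, or more directly via Fubini applied to the joint law on $S^\N\otimes M_1(S)^\N$. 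Once these measure-theoretic niceties are in place, the probabilistic heart of the argument is just Kolmogorov's classical strong law applied pointwise in $\bs\lambda$.
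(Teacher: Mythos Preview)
Your proposal is correct and follows essentially the same route as the paper: reduce to the standard construction, apply the disintegration of Theorem~\ref{thm:bogachev-rcp} together with the substitution principle so that under $\eta^{\bs\lambda}$ the centered variables $f\circ X_i-\lambda_i(f)$ are independent and uniformly bounded, invoke Kolmogorov's strong law pointwise in $\bs\lambda$, and integrate against $\Prob_{\bs\xi}$. The only cosmetic difference is that the paper phrases the Kolmogorov input as $\sup_n\Var(Z_n\given\bs\xi)\leq 4\|f\|_\infty^2$ rather than your series criterion, and it makes the measurability step explicit by writing the convergence event as $\bs A_{\bs\lambda}\times M_1(S)^\N$ and computing $\eta^{\bs\lambda}(E)=\rho(\bs\lambda,\bs A_{\bs\lambda})$ directly.
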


\begin{remark}
Notice that, in the theorem above, no additional assumptions are imposed on the product disintegration $\bs\xi$. In particular, Theorem~\ref{thm:prop1} holds when $\bs\xi$ is the canonical product disintegration of $\bs X$. This is crucial for the `only if' part of Theorem~\ref{thm:LLN-characterization}.
\end{remark}

\begin{remark}
For simplicity, we just ask  $f\in C(S)$ in the statement of Theorem \ref{thm:prop1}, and in fact this is all we need in the following results and also in the examples of section \ref{sec:examples}, but we remark that the result also holds if $f$ is only assumed to be measurable and bounded.
\end{remark}

The corollary below is an immediate consequence of Theorem~\ref{thm:prop1}, by taking $S$ as a compact subset of the real line and $f$ as the identity map (in which case $\xi_i(f) = \int_S x\, {\xi}_i(\dd x) = \E(X_i \given \bs{\xi})$), and shows how the product disintegration can be used to assure the validity of the strong law of large numbers for a sequence of {uniformly bounded} random variables.

\begin{corollary}
Suppose $S$ is a compact subset of the real line, and assume $\bs{\xi}\coloneqq (\xi_0,\xi_1,\dots)$ is a product disintegration of $\bs{X} \coloneqq (X_0, X_1,\dots)$, where the $X_i$ are random variables with values in $S$. Then the limit $X_\infty\coloneqq \lim_{n\to\infty}{n}^{-1}\sum_{i=0}^{n-1}  X_i$ exists almost surely if and only if the limit $\xi_\infty \coloneqq \lim_{n\to\infty} n^{-1}\sum_{i=0}^{n-1} \E(X_i \given \bs{\xi})$ exists almost surely, in which case $X_\infty = \xi_\infty$ a.s. If moreover $\xi_\infty^\omega$ does not depend on $\omega$ (almost surely), then the strong law of large numbers holds for $\bs{X}$.
\end{corollary}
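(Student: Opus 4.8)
The plan is to read off the corollary directly from Theorem~\ref{thm:prop1} applied to the single function $f\colon S\to\R$, $f(x)=x$. Since $S$ is a compact subset of the real line, this identity function is continuous and bounded, so $f\in C(S)$ and Theorem~\ref{thm:prop1} applies verbatim: almost surely,
\[
\lim_{n\to\infty}\frac1n\sum_{i=0}^{n-1}\bigl(X_i-\xi_i(f)\bigr)=0,
\]
where $\xi_i(f)=\int_S x\,\xi_i(\dd x)$. So the only real content is to rewrite $\xi_i(f)$ as $\E(X_i\given\bs\xi)$ and then interpret the conclusion.

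First I would identify $\xi_i(f)$ with $\E(X_i\given\bs\xi)$. Arguing exactly as in the proof of Lemma~\ref{thm:product-disintegration-existence} (taking the $n$ there equal to $i$ and $A_j=S$ for $j\neq i$), one gets that for every measurable $A\subseteq S$ the random variable $\xi_i(A)$ is a version of $\Prob(X_i\in A\given\bs\xi)$, i.e.\ $\E(\I_A\circ X_i\given\bs\xi)=\xi_i(\I_A)$ almost surely. By linearity this extends to simple functions, and then, since $S$ is bounded (so every element of $\mathrm{Meas}_b(S)$ is a uniform, hence bounded, limit of simple functions), the dominated convergence theorem for conditional expectations extends it to all of $\mathrm{Meas}_b(S)$; in particular $\E(X_i\given\bs\xi)=\xi_i(f)$ a.s. Substituting into the display above gives
\[
\lim_{n\to\infty}\Bigl(\frac1n\sum_{i=0}^{n-1}X_i-\frac1n\sum_{i=0}^{n-1}\E(X_i\given\bs\xi)\Bigr)=0\qquad\text{a.s.},
\]
from which the stated equivalence is immediate: one Cesàro average converges almost surely if and only if the other does, and on the event where both converge the limits coincide, so $X_\infty=\xi_\infty$ a.s.

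For the final assertion, suppose $\xi_\infty^\omega=c$ for almost every $\omega$, where $c$ is a deterministic real number. Then $X_\infty=c$ almost surely, i.e.\ the sample means $n^{-1}\sum_{i=0}^{n-1}X_i$ converge almost surely to a constant, which is precisely the strong law of large numbers for $\bs X$. One can moreover identify the constant: the $X_i$ take values in the bounded set $S$, so applying bounded convergence to the Cesàro averages of $\E X_i=\E\bigl(\E(X_i\given\bs\xi)\bigr)$ yields $c=\E\xi_\infty=\lim_{n\to\infty}n^{-1}\sum_{i=0}^{n-1}\E X_i$ (in particular $c=\E X_0$ whenever the $X_i$ are identically distributed).

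I do not anticipate a genuine obstacle: the corollary is essentially a specialization of Theorem~\ref{thm:prop1}. The only point needing a little care is the passage from the defining identity of a product disintegration --- a statement about conditional probabilities of coordinate events --- to the statement $\E(X_i\given\bs\xi)=\xi_i(f)$ about conditional \emph{expectations}, which is handled by the standard approximation argument sketched above; everything else is a formal consequence of Theorem~\ref{thm:prop1} together with the elementary fact that if $a_n-b_n\to0$ then $(a_n)$ converges iff $(b_n)$ does.
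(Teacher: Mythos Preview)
Your proposal is correct and follows exactly the route the paper indicates: specialize Theorem~\ref{thm:prop1} to the identity map $f(x)=x$ on the compact set $S\subseteq\R$ and identify $\xi_i(f)=\E(X_i\given\bs\xi)$. You supply more detail than the paper does---in particular the standard approximation argument for $\E(X_i\given\bs\xi)=\xi_i(f)$ and the identification of the limiting constant---but the approach is identical.
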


\begin{proof}[Proof of Theorem~\ref{thm:prop1}]
Write
\(Z_i \coloneqq f\circ X_i - \xi_i(f).\)
We have
\begin{equation}\label{eq:WLLN-2}
\Prob\bigg(\lim_{n \to \infty}\abs{n^{-1}\sum\nolimits_{i=0}^{n-1} Z_i} = 0 \bigg) = \E\left\{\Prob\left( \lim_{n \to \infty}\abs{n^{-1}\sum\nolimits_{i=0}^{n-1} Z_i} = 0 \,\,\bigg\vert\,\,\bs{\xi}\right)\right\}.
\end{equation}
The idea now is that $\left(Z_n\given\bs{\xi}:\,n\in\N\right)$ is an independent sequence, with $\E\left[Z_n\given\bs{\xi}\right] = 0$ and $\sup_n\Var\left(Z_n\given\bs{\xi}\right)\leq4\Vert f\Vert_\infty^2<\infty$, and therefore Kolmogorov's strong law (Theorem~\ref{thm:kolmogorov-slln}) ensures that, with probability one, the conditional probability inside the expectation in \eqref{eq:WLLN-2} is equal to 1.

To make this argument precise, take $\Omega$, $\Prob$, $\bs X$ and $\bs\xi$ as in the standard construction discussed above, and let $\big(\eta^{\bs\lambda}\colon\,\bs\lambda\in M_1(S)^{\mathbb{N}}\big)$ be given as in Theorem~\ref{thm:bogachev-rcp}, with $\Lambda = M_1(S)^\N$. In this setting it is easy to see that, for $E\subseteq\Omega$ of the form $E = \bs A\times\bs B$, with $\bs A\subseteq S^\N$ and $\bs B\subseteq M_1(S)
^\N$, we have $\eta^{\bs\lambda}(E) = \rho(\bs\lambda,\bs A)\I_{\bs B}(\bs\lambda)$. Indeed, here we have $(\bs A\times\bs B)\cap\bs\xi^{-1}(\bs L) = \bs A\times(\bs B\cap \bs L)$ and then, by \eqref{eq:kernel-equation}, 
\begin{equation*}
\Prob\big((\bs A\times\bs B)\cap\bs\xi^{-1}(\bs L)\big) = \int_{\bs B\cap \bs L}\rho(\bs\lambda,\bs A)\,\Prob_{\bs\xi}(\dd \bs\lambda) = \int_{\bs L}\rho(\bs\lambda,\bs A)\,\I_{\bs B}(\bs \lambda)\,\Prob_{\bs\xi}(\dd \bs\lambda).
\end{equation*}
In particular,
\begin{equation}\label{eq:eta-in-terms-of-rho-2}
\eta^{\bs\lambda}(\bs A\times M_1(S)) = \rho(\bs\lambda, \bs A).
\end{equation}
Now let $E = \big\{\omega\colon\,\lim\abs{n^{-1}\sum\nolimits_{i=0}^{n-1} Z_i(\omega)} = 0\big\}$. By Theorem~\ref{thm:bogachev-rcp}, we have
\(
\Prob(E) = \int \eta^{\bs\lambda}(E)\,\Prob_{\bs\xi}(\dd\bs\lambda)
\)
and
\begin{equation}\label{eq:eta-lambda-equality}
\eta^{\bs\lambda}(E) = \eta^{\bs\lambda}\big\{\omega\colon\,\lim \vert n^{-1}\sum\nolimits_{i=0}^{n-1} f\circ X_i(\omega) - \lambda_i(f)\vert = 0\big\},
\end{equation}
Thus, writing $\bs A_{\bs\lambda} = \big\{\bs x\in S^\N\colon\,\lim \vert  n^{-1}\sum\nolimits_{i=0}^{n-1} f(x_i) - \lambda_i(f)\vert =0\big\}$, we see that the following equality of events holds
\[
\bs A_{\bs\lambda} \times M_1(S)^\N = \big\{\omega\colon\, \lim \vert n^{-1}\sum\nolimits_{i=0}^{n-1} f\circ X_i(\omega) - \lambda_i(f)\vert = 0\big\}. 
\]
Therefore, by \eqref{eq:eta-in-terms-of-rho-2} and \eqref{eq:eta-lambda-equality}, we obtain
\(\eta^{\bs\lambda}(E) = \rho(\bs\lambda, \bs A_{\bs\lambda}) = 1,\)
where the rightmost equality follows from Kolmogorov's strong law, as $\rho(\bs\lambda,\cdot)$ is the law of a sequence of independent, zero mean random variables with uniformly bounded variances. This establishes \eqref{eq:prop1}. The second part of the statement now follows trivially.
\end{proof}

\begin{proof}[Proof of Theorem~\ref{thm:LLN-characterization}]
Recall that $S=\{0,1\}$. The idea is that in this setting $M_1(S)$ is isomorphic to the unit interval. First, notice that given any two probability measures $\lambda,\mu \in M_1(S)$, we have that $\lambda\neq\mu$ iff $\lambda\{1\}\neq\mu\{1\}$. Thus, the mapping $\lambda\mapsto f_1(\lambda):=\lambda\{1\}$ is one-to-one from $M_1(S)$ onto $[0,1]$. As Theorem~\ref{thm:KAL-A2-3} tells us that this mapping is measurable, we can apply Kuratowski's \emph{range and inverse} Theorem~\ref{thm:kuratowski} to conclude that its inverse is also measurable.

For the `if' part of the theorem, let $\xi_n^\omega$ be the unique probability measure in $M_1(S)$ for which $\xi_n^\omega\{1\} = \vartheta_n(\omega)$, $n\in\N$, $\omega\in\Omega$. The reasoning in the preceding paragraph then tells us that $\sigma(\vartheta_n) = \sigma(\xi_n)$ for all $n$ and consequently $\sigma(\bs\vartheta) = \sigma(\bs\xi)$. Therefore, we have that
\(
\Prob(\bs X\in \cdot \given\bs\xi)=\Prob(\bs X\in \cdot\given\bs\vartheta),
\)
which tells us that $\bs\xi$ is a product disintegration of $\bs X$ since the righthandside in this equality is a product measure on $S^\N$ (with probability 1), by assumption. As we have, again by assumption, that
$\lim_{n\to\infty}n^{-1}\sum_{i=0}^{n-1}\xi_i(f) = p$ almost surely, with $f = \I_{\lbrace1\rbrace}$ (which is a continuous function on $S$), Theorem~\ref{thm:prop1} then tells us that \eqref{eq:LLN} holds.

For the `only if' part, let now $\bs\xi=(\xi_0,\xi_1,\dots)$ denote the canonical product disintegration of $\bs X$, and write $\vartheta_n := \xi_n\{1\}$ for all $n$. It is clear (again using the fact that $\sigma(\bs\xi)=\sigma(\bs\vartheta)$) that \eqref{eq:bernoulli-product-disintegration} holds.
Also, we have by assumption that $\lim_{n\to\infty} n^{-1} \sum_{i=0}^{n-1} f\circ X_i = p$, with $f = \I_{\lbrace1\rbrace}$ (as $X_i = \I_{[X_i=1]}$), and since $\xi_i(f) \equiv\xi_i\{1\} = \vartheta_i$, Theorem~\ref{thm:prop1} tells us that the limit in \eqref{eq:LLN-xi} holds. This completes the proof.
\end{proof}

\section{Examples}\label{sec:examples}

\subsection{Product disintegrations \emph{per se}}
\begin{example}[Product disintegrations are not (necessarily) unique]\label{example:non-unique}
Let $\bs\vartheta=(\vartheta_n:\,n\in\mathbb{N})$ be a sequence of independent and identically distributed random variables, uniformly distributed in the unit interval $[0,1]$, and let, for $n\in\N$, $\xi_n$ be the random probability measure on $S\coloneqq\lbrace0,1\rbrace$ defined via $\xi_n^\omega(1) \coloneqq \vartheta_n(\omega)$, where for simplicity we write $\xi_n(x)$, $x\in S$, instead of $\xi_n(\{x\})$. Assume further that $\bs\xi$ is a product disintegration of a given sequence $\bs X$ of Bernoulli random variables --- if necessary, proceed with the standard construction. As argued in the proof of Theorem~\ref{thm:LLN-characterization}, we have $\sigma(\bs\xi) = \sigma(\bs\vartheta)$ and, in particular, it holds that conditionally on $\bs{\xi}$ each $X_n$ is a Bernoulli random variable with parameter $\vartheta_n$. That is, for each $n\in\N$ we have
\(
\Prob(X_n = 1\given \bs{\xi}) = \vartheta_n. 
\)
Now define $\hat{\xi}_n\colon\Omega\rightarrow M_1(S)$ by $\hat{\xi}_n^\omega(1) \coloneqq \delta_{X_n(\omega)}(1) = \I_{[X_n=1]}(\omega)$, so that $\hat{\bs{\xi}}\coloneqq (\hat{\xi}_0,\hat{\xi}_1,\dots)$ is the canonical product disintegration of $\bs{X}$. Clearly $\bs{\xi}$ and $\hat{\bs{\xi}}$ are different since $\hat{\xi}_n^\omega$ is equal either to $\delta_{\lbrace0\rbrace}$ or $\delta_{\lbrace1\rbrace},$ whereas this is not true of $\xi_n$. Indeed, for $\theta\in[0,1)$, we have \(\Prob(\xi_n(1)\leq\theta) = \Prob(\vartheta_n\leq\theta) = \theta\),
whereas
\(
\Prob(\hat{\xi}_n(1)\leq \theta) = \Prob(\I_{[X_n=1]}\leq\theta)
= \Prob(\I_{[X_n=1]} = 0)
= \Prob(X_n = 0).
\)
\end{example}

\begin{example}[Random Walk as a two-stage experiment with random jump probabilities]
In the same setting as Example~\ref{example:non-unique}, let $Z_n \coloneqq 2X_n-1$, $n\in\N$. Clearly $\bs Z = (Z_0, Z_1,\dots)$ is an independent and identically distributed sequence of standard Rademacher random variables, i.e., for each $n \in \mathbb{N}$ it holds that $\Prob(Z_n = +1) = \Prob(Z_n = -1) = \nicefrac{1}{2}$. Indeed,
for any $x_0,x_1,\dots,x_n\in \{0,1\}$, we have
\(
\Prob(Z_0 = 2x_0-1, \dots, Z_n = 2x_n-1) =
\Prob\left(X_0 =  x_0  , \dots, X_n =  x_n \right)
= \E \prod_{j=0}^n \xi_j(x_j)
= \prod_{j=0}^n \E\xi_j(x_j),
\)
where the last equality follows from the assumption that the $\vartheta_n$'s are independent. Moreover, $\E \xi_j(x_j) = 1/2$ since the left-hand side in this equality is either $\E\vartheta_j$ or $1 - \E\vartheta_j$. Now let $S_0\coloneqq 0$ and $S_n = Z_0 + \cdots + Z_{n-1}$ for $n\geq 1$. By the above derivation, $(S_n\colon\,n\geq 0)$ is the symmetric random walk on $\Z$. Therefore, although --- unconditionally --- at each step the process $(S_n)$ jumps up or down with equal probabilities, we have that conditionally on $\bs\xi$ it evolves according to the following rule: at step $n$, sample a Uniform$[0,1]$ random variable $\vartheta_n$ independent of anything that has happened before (and of anything that will happen in the future), and go up with probability $\vartheta_n$, or down with probability $1 - \vartheta_n$.
\end{example}

\begin{example}
Let $\bs{X} = (X_0, X_1, \dots)$ be an exchangeable sequence of Bernoulli$(p$) random variables. In particular, $\bs X$ satisfies equation \eqref{eq:productmeasure} for some random variable $\vartheta$ taking values in the unit interval. Then, defining the random measures $\xi_n$ via $\xi_n(\{1\})\coloneqq \vartheta$ for all $n$, it is clear that $(\xi_0,\xi_1,\dots) =:\bs\xi$ is a stationary product disintegration of $\bs{X}$ --- again using the fact that $\sigma(\bs\xi) = \sigma(\bs\vartheta)$. In particular, in this scenario, an unconditional strong law of large numbers \emph{does not hold} for $\bs{X}$, unless when $\vartheta$ is a constant. See also Theorem~2.2 in \cite{taylor1987laws}, which provides a characterization of the strong law for the class of integrable, exchangeable sequences. This example illustrates that the existence of a product disintegration is not sufficient for the law of large numbers to hold (indeed, by Proposition~\ref{thm:product-disintegration-existence}, any sequence of random variables admits a product disintegration).
\end{example}

\begin{example}[Concentration inequalities]\label{example:concentration}
One important consequence of the notion of a product disintegration is that it allows us to easily translate certain concentration inequalities (such as the Chernoff bound, Hoeffding's inequality, Bernstein's inequality, etc) from the independent case to a more general setting. Recall that the classical Hoeffding inequality says that,
if $\bs X = (X_0, X_1,\dots)$ is a sequence of independent random variables with values in $\left[0, 1\right]$, then one has the bound
\(
\Prob\left(S_n \geq t\right)\leq\exp\left(-{2t^2}/{n}\right)
\)
for all $t>\E S_n$, where $S_n\coloneqq\sum_{i=0}^{n-1} X_i$.

\begin{theorem}[Hoeffding-type inequality]\label{thm:hoeffding-new}
Let $\bs X = \left(X_0, X_1,\dots\right)$ be a sequence of random variables with values in the unit interval $S := \left[0,1\right]$, and let $\bs\xi = (\xi_0,\xi_1,\dots)$ be a product disintegration of $\bs X$. Then, for any $t>0$, it holds that
\(
\Prob\left(S_n \geq t\given\E(S_n\given\bs\xi) < t\right) \leq\exp\left(-{2t^2}/{n}\right),
\)
where $S_n\coloneqq\sum_{i=0}^{n-1} X_i$.
\end{theorem}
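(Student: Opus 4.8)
The plan is to condition on $\bs\xi$ and fall back on the ordinary Hoeffding inequality. By Definition~\ref{def:product-disintegration-simple-version}, conditionally on $\bs\xi$ the coordinates $X_0,\dots,X_{n-1}$ form an independent family of $[0,1]$-valued random variables; moreover, arguing exactly as in the proof of Lemma~\ref{thm:product-disintegration-existence} but with the (continuous) identity map $x\mapsto x$ in place of the indicators $\I_{A_i}$, one checks that $\sum_{i=0}^{n-1}\int_S x\,\xi_i(\dd x)$ is a version of $\E(S_n\given\bs\xi)$. Hence the event $A\coloneqq\{\E(S_n\given\bs\xi)<t\}$ is $\sigma(\bs\xi)$-measurable, and on $A$ the conditional mean of $S_n$ is strictly less than $t$; applying the classical Hoeffding inequality to the conditional law should then give $\Prob(S_n\geq t\given\bs\xi)\leq\exp(-2t^2/n)$ on $A$. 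Integrating over $A$ yields $\Prob(\{S_n\geq t\}\cap A)=\E[\I_A\,\Prob(S_n\geq t\given\bs\xi)]\leq\exp(-2t^2/n)\,\Prob(A)$, and dividing by $\Prob(A)$ gives the claim (when $\Prob(A)=0$ there is nothing to prove, the conditional probability being vacuous).

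To make rigorous the step ``apply Hoeffding to the conditional law'', I would reuse the device from the proof of Theorem~\ref{thm:prop1}: pass to the standard construction, so that $\bs X$ and $\bs\xi$ become coordinate projections, and invoke the disintegration theorem (Theorem~\ref{thm:bogachev-rcp}) with $\Lambda=M_1(S)^\N$ to obtain a kernel $(\eta^{\bs\lambda}\colon\bs\lambda\in M_1(S)^\N)$. As established there (see \eqref{eq:eta-in-terms-of-rho-2}), $\eta^{\bs\lambda}(\bs A\times M_1(S)^\N)=\rho(\bs\lambda,\bs A)$, so under $\eta^{\bs\lambda}$ the variables $X_0,\dots,X_{n-1}$ are independent with $X_i\sim\lambda_i$, and their $\eta^{\bs\lambda}$-expectation equals $m(\bs\lambda)\coloneqq\sum_{i=0}^{n-1}\int_S x\,\lambda_i(\dd x)$. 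The set $\bs L\coloneqq\{\bs\lambda\colon m(\bs\lambda)<t\}$ is Borel, since each $\bs\lambda\mapsto\int_S x\,\lambda_i(\dd x)$ is measurable (Theorem~\ref{thm:KAL-A2-3}). For $\bs\lambda\in\bs L$ the ordinary Hoeffding inequality gives $\eta^{\bs\lambda}(S_n\geq t)\leq\exp(-2t^2/n)$, whence, by Theorem~\ref{thm:bogachev-rcp},
\[
\Prob\big(\{S_n\geq t\}\cap\bs\xi^{-1}(\bs L)\big)=\int_{\bs L}\eta^{\bs\lambda}(S_n\geq t)\,\Prob_{\bs\xi}(\dd\bs\lambda)\leq\exp(-2t^2/n)\,\Prob\big(\bs\xi^{-1}(\bs L)\big).
\]
Since $m(\bs\xi)$ is a version of $\E(S_n\given\bs\xi)$, one has $\bs\xi^{-1}(\bs L)=A$ up to a $\Prob$-null set, so this is precisely $\Prob(\{S_n\geq t\}\cap A)\leq\exp(-2t^2/n)\,\Prob(A)$, and dividing by $\Prob(A)$ finishes the proof.

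I expect the bookkeeping just described — rather than any conceptual difficulty — to be the only real obstacle: the crux is passing from ``$\bs X$ is conditionally independent given $\bs\xi$'' to an honest unconditional statement about the independent family realized under each $\eta^{\bs\lambda}$, to which the classical Hoeffding bound applies verbatim. This is exactly the manoeuvre already carried out for Theorem~\ref{thm:prop1} (there in order to apply Kolmogorov's strong law), so here it reduces to repeating that construction with Hoeffding in place of Kolmogorov and with the additional, harmless restriction to the $\sigma(\bs\xi)$-measurable event on which the conditional mean lies below $t$; the degenerate case $\Prob(A)=0$ is dispatched by convention.
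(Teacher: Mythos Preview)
Your proposal is correct and follows essentially the same route as the paper: apply the classical Hoeffding inequality to the conditional law $\Prob(\cdot\given\bs\xi)$ on the event $\{\E(S_n\given\bs\xi)<t\}$, take expectations, and divide by $\Prob(A)$. The paper's proof is terser --- it simply asserts the conditional bound $\Prob(S_n\geq t\given\bs\xi)\,\I_A\leq\exp(-2t^2/n)\,\I_A$ without passing to the standard construction --- whereas you spell out that rigorous justification by recycling the disintegration device from the proof of Theorem~\ref{thm:prop1}.
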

\begin{proof}
From the classical Hoeffding inequality applied to the probability measures $\Prob(\cdot\given\bs\xi)_\omega$, we have
\(
\Prob\left( S_n \geq t\given\bs{\xi}\right)\I_{\lbrace\E(S_n \given\bs\xi) < t\rbrace}
\leq \exp\left(-{2t^2}/{n}\right)\I_{\lbrace\E(S_n \given\bs\xi) < t\rbrace}.
\)
Taking the expectation on both sides of the above inequality, and dividing by $\Prob(\E(S_n\,\given\,\bs\xi)< t)$, yields the stated result.
\end{proof}

Notice that if $\bs\xi$ is the canonical product disintegration of $\bs X$, then the above theorem is not very useful: indeed in this case we have $\E(S_n\given\bs\xi) = S_n$, so the left-hand side in the inequality is zero. The above theorem also tells us that, for $t>0$,
\begin{align*}
\Prob\big(S_n \geq t\big) &= \Prob\big(S_n \geq t\,\big\vert\,\E(S_n\given\bs\xi) < t\big)\times\Prob\big(\E(S_n\given\bs\xi) < t\big)\\
&\qquad\qquad +\quad\Prob\big(S_n \geq t\,\big\vert\,\E(S_n\given\bs\xi) \geq t\big)\times\Prob\big(\E(S_n\given\bs\xi) \geq t\big)\\
&\qquad\qquad\qquad\leq\quad \exp\left(-\frac{2t^2}{n}\right) \,+\, \Prob\big(\E(S_n\given\bs\xi) \geq t\big)
\end{align*}
so the rate at which $\Prob\big(S_n \geq t\big)\to0$ as $t\to\infty$ is governed by the rate at which $\Prob\left(\E(S_n\,\big\vert\,\bs\xi) \geq t\right)\to\infty$ as $t\to\infty$. To illustrate, let us consider two extreme scenarios, one in which $\xi_n = \xi_0$ for all $n$ (so that $\bs X$ is exchangeable) and one in which the $\xi_n$'s are all mutually independent: in the first case, we have that $\E\big(S_n\given\bs\xi\big) = n \int_0^1 x\,\xi_0(\dd x)$, and thus the rate at which $\Prob\big(S_n \geq t\big)\to0$ as $t\to\infty$ depends only on the distribution of the random variable $\int_0
^1 x\,\xi_0(\dd x)$. On the other hand, if the $\xi_n$'s are independent, then we have that $\E\big(S_n\given\bs\xi\big) = \sum_{i=0}^{n-1}\int_0^1 x\,\xi_n(\dd x)$, and in this case the summands are independent random variables with values in the unit interval. Therefore, we can apply the classical Hoeffding inequality to these random variables to obtain the upper bound $\Prob(S_n\geq t)\leq 2\exp(-2t
^2/n)$ for $t>\E S_n$ (in fact, we already know that the upper bound $\exp(-2t^2/n)$ holds, since independence of the $\xi_n$'s entails independence of the $X_n$'s).
\end{example}

\begin{example}
Let $S\coloneqq [a,b]^d$ where $d$ is a positive integer and $a<b\in\R$. Given a sequence $\bs{X} = (X_1, X_2, \dots)$ of $S$-valued random variables, we shall write $X_n = (X_n^{1},\dots,X_n^{d})$. Suppose $\bs\xi = (\xi_0,\xi_1,\dots)$ is a product disintegration of $\bs X$. Equation \eqref{eq:weaker-product-measure-disintegration} then yields, for all measurable sets $A_i^j\subseteq [a,b]$, with $i\in\lbrace0,\dots,n\rbrace$ and $j\in\lbrace1,\dots,d\rbrace$, the equality
\begin{equation*}
\Prob(X_0^1\in A_0^1,\dots X_0^d\in A_0^d,\dots, X_n^1\in A_n^1,\dots,X_n^d\in A_n^d\given\bs{\xi})
=\xi_0(A_0^1\times\cdots\times A_0^d)\cdots\xi_n(A_n^1\times\cdots\times A_n^d).
\end{equation*}
An identity as above appears naturally in statistical applications, for instance when one observes samples of size $d$, $(X_n^1,\dots,X_n^d)$, $n=0,1,\dots$, from distinct ``populations'' $\xi_0, \xi_1,\dots$ --- we refer the reader to \cite{petersen2016functional} and references therein for details.
\end{example}

\subsection{Convergence}
\begin{example}[Regime switching models]\label{exmp:regime-switch}
Let $S = \{-1,1\}$ and put $M'\coloneqq \{\mu,\lambda\}\subseteq M_1(S)$ with $\mu(1)>\lambda(1)$. The measures $\mu$ and $\lambda$ are to be interpreted as $2$ distinct ``{regimes}'' (for example, expansion and contraction, in which case one would likely assume $\mu(1)>1/2>\lambda(1)$). Let $(Q_{ij}\colon\,i,j\in M')$ be a row stochastic matrix with stationary distribution $\pi = (\pi_\mu, \pi_\lambda)$. Let $\bs{\xi}\coloneqq (\xi_0,\xi_1,\dots)$ be a Markov chain with state space $M'$, initial distribution $\pi$ and transition probabilities $(Q_{ij})$. Notice that $\E\xi_n = \mu \pi_\mu + \lambda \pi_\lambda$ for all $n$.

Assume $\bs{X}\coloneqq (X_0, X_1,\dots)$ is a sequence of $S$-valued random variables   and that $\bs\xi$ is a product disintegration of $\bs X$. Then we have, for $x\in \{-1,1\}$, that $\Prob(X_n = x) = \E\xi_n(x) = \mu(x)\pi_\mu + \lambda(x)\pi_\lambda$. We also have, for $x_0,x_1\in \{-1,1\}$,
\begin{align}\label{eq:markov-switching}
\begin{split}
\Prob(X_0 = x_0, X_1 = x_1) &= \E \xi_0(x_0)\xi_1(x_1)\\
&= \mu(x_0)\mu(x_1)\pi_{\mu}Q_{\mu\mu} + \mu(x_0)\lambda(x_1)\pi_{\mu}Q_{\mu\lambda}\\
&\qquad +\lambda(x_0)\mu(x_1)\pi_\lambda Q_{\lambda\mu} + \lambda(x_0)\lambda(x_1)\pi_\lambda Q_{\lambda\lambda}
\end{split}
\end{align}
This shows that in general it may be difficult to compute the finite-dimensional distributions of the process $(X_0, X_1,\dots)$ --- although this process inherits stationarity from $\bs\xi$. Also, an easy check tells us that generally speaking $\bs X$ is not a Markov chain. 

Nevertheless, assuming $Q$ is irreducible and positive recurrent (i.e., $\pi_\mu\notin\lbrace0,1\rbrace$), we have by the ergodic theorem for Markov chains, that
\begin{equation}\label{eq:markov-chain-ergodic-theorem}
\lim_{n\to\infty}\frac{1}{n}\sum_{k=0}^{n-1}h\circ\xi_k = \pi_\mu h(\mu) + \pi_\lambda h(\lambda) = \E h\circ\xi_0,\qquad{\mathrm{a.s}},
\end{equation}
for any bounded $h\colon M'\to\R$. 
Now let $f\colon S\to \R$ and consider the particular case where $h\circ \xi := \xi (f)$. Equation \ref{eq:markov-chain-ergodic-theorem} becomes
\begin{equation}\label{eq:markov-chain-ergodic-theorem-nova}
\lim_{n\to\infty}\frac{1}{n}\sum_{k=0}^{n-1} \xi_k (f) = \pi_\mu \mu(f) + \pi_\lambda \lambda(f) = 
\E \xi_0(f) ,\qquad{\mathrm{a.s}}.
\end{equation}
Therefore, using  Theorem~\ref{thm:prop1} and then \eqref{eq:markov-chain-ergodic-theorem-nova}, we have that
\begin{align*}
\lim_{n\to\infty}\frac{1}{n}\sum_{k=0}^{n-1}f\circ X_k & = \E \xi_0 (f) \\ 
& = \mu(f) \Prob[\xi_0=\mu]+\lambda(f)\Prob[\xi_0=\lambda] \\ 
& = \left( f(1)\mu(1) + f(-1)\mu(-1)\right)\pi_\mu 
+ \left( f(1)\lambda(1) + f(-1)\lambda(-1)\right)\pi_\lambda \\ 
& = f(1)\big(\mu(1)\pi_\mu +\lambda(1)\pi_\lambda\big) + f(-1)\big(\mu(-1)\pi_\mu +\lambda(-1)\pi_\lambda\big)
\end{align*}
holds almost surely.
In particular this is true with $f = \I_{\lbrace1\rbrace}$; thus, even though the `ups and downs' of $\bs X$ are governed by a law which can be rather complicated (as one suspects by inspecting equation \eqref{eq:markov-switching}), we can still estimate the overall (unconditional) probability of, say, the expansion regime by computing the proportion of ups in a sample $(X_0,\dots,X_n)$:
\[  
\lim_{n\to\infty}\frac{1}{n}\sum_{k=0}^{n-1} \I_{\lbrace1\rbrace}(X_k) = \mu(1)\pi_\mu +\lambda(1)\pi_\lambda.
\]
\end{example}

\begin{example}
Suppose $\bs\vartheta = (\vartheta_0,\vartheta_1,\dots)$ is a submartingale, with $\mathrm{range}(\vartheta_n)\subseteq[0,1]$ for all $n$. By the Martingale Convergence Theorem, there exists a random variable $\vartheta_\infty$ such that $\lim \vartheta_n = \vartheta_\infty$ almost surely (thus, we can assume without loss of generality that $0\leq\vartheta_\infty\leq1$). Furthermore, let $S\coloneqq\{0,1\}$ and, for $n\in\N$, let $\xi_n$ denote the random probability measure on $S$ defined via $\xi_n(\{1\}) = \vartheta_n$, and $\xi_n(\{0\}) = 1-\vartheta_n$. We have $\xi_n(\I_{\lbrace1\rbrace}) = \vartheta_n \to\vartheta_\infty$ a.s. Assume further that $\bs\xi = (\xi_0, \xi_1,\dots)$ is a product disintegration of a sequence $\bs X = (X_0, X_1,\dots)$ of random variables with values in $S$. Using Theorem~\ref{thm:prop1} we have 
\[ \lim_{n \to \infty}  \frac{1}{n}\sum_{i=0}^{n-1} \I_{\lbrace1\rbrace}(X_i) = 
\lim_{n \to \infty}  \frac{1}{n}\sum_{i=0}^{n-1} \xi_i(\I_{\lbrace1\rbrace})
= \lim_{n \to \infty} \frac{1}{n}\sum_{i=0}^{n-1}\vartheta_i=\vartheta_\infty \quad \text{a.s.}
\]
which means, the proportion of 1's in $(X_0,\dots,X_n)$ approaches $\vartheta_\infty$ with probability one.

To illustrate, let $(U_n\colon n\geq1)$ be a sequence of independent and identically distributed Uniform$[0,1]$ random variables. Let $\vartheta_0 = U_0/2$ and, for $n\geq1$, define $\vartheta_{n} \coloneqq \vartheta_{n-1} + 2^{-(n+1)}U_{n}$. Figure \ref{fig:submartingale} displays, in blue, a simulated sample path of the submartingale $(\vartheta_n\colon n\in\N)$ up to $n=20$. The $\circ$'s represent the successive outcomes of the coin throws (where the probability of `heads' in the $n$th throw is $\vartheta_n$). In purple are displayed the sample path of the means $(n^{-1}S_n\colon n\in \N)$, where $S_n$ is the partial sum $\sum_{i=0}^{n-1}X_n$. In this model, even if we only observe the outcomes of the coin throws, we can still estimate the value of $\vartheta_\infty$: all we need to do is to compute the proportion of heads in $X_0,\dots,X_n$, with $n$ large.
\begin{figure}
\begin{center}
\includegraphics{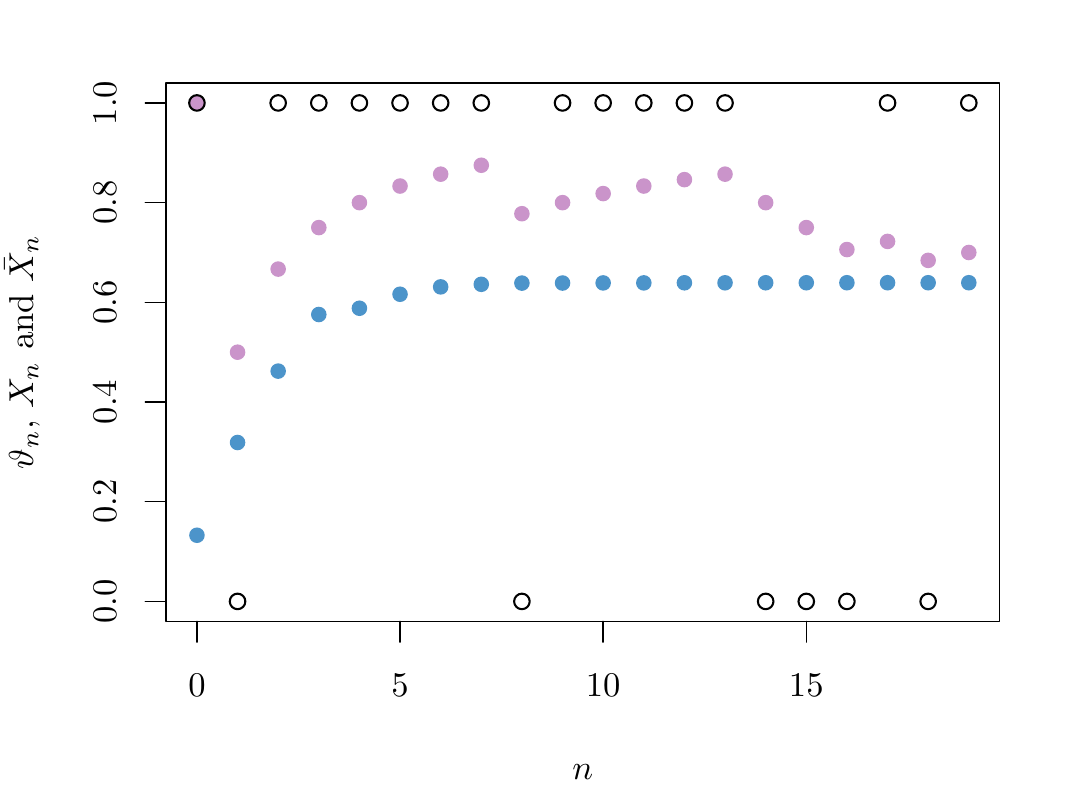}
\caption{A sample path of the submartingale $(\vartheta_n)$.}\label{fig:submartingale}
\end{center}
\end{figure}
\end{example}

\begin{example}
We now show that a certain class of stochastic volatility models can be accommodated into our framework of product disintegrations. Stochastic volatility models are widely used in the financial econometrics literature, as they provide a parsimonious approach for describing the volatility dynamics of a financial asset's return --- see \cite{shephard2009overview} and \cite{davis2009svmodels} and references therein for an overview. A basic specification of the model\footnote{Which can be relaxed by putting $g\circ H_t$ in place of $\ee^{H_t/2}$, and allowing $\bs H$ to evolve according to more flexible dynamics.} is as follows: let $\bs Z = (Z_t\colon t\in\Z)$ and $\bs W = (W_t\colon t\in \Z)$ be centered iid sequences, independent from one another, and define $\bs X$ and $\bs H$ via the stochastic difference equations
\[
X_t = \ee^{H_t/2}Z_t,\quad t\in\N,
\quad\mbox{and}\quad
H_t = \alpha + \beta H_{t-1} + W_t,\quad t\geq 1,
\]
where $\alpha$ and $\beta$ are real constants and where $H_0$ follows some prescribed distribution. The random variable $X_t$ is interpreted as the return (log-price variation) on a given financial asset at date $t$, and the $H_t$'s are latent (i.e, unobservable) random variables that conduct the volatility of the process $\bs X$. Usually this process is modelled with Gaussian innovations, that is, with $W_t$ and $Z_t$ normally distributed for all $t$. In this case the random variables $X_t$ are supported on the whole real line, so we need to consider other distributions for $\bs Z$ and $\bs W$ if we want to ensure that the $X_t$'s are compactly supported. 

Our objective is to show how Theorem~\ref{thm:prop1} can be used to estimate certain functionals of the latent volatility process $\bs H$ in terms of the observed return process $\bs X$. To begin with, notice that if $|\beta|<1$ and if $H_0$ is defined via the series $H_0 \coloneqq {(1-\beta)}^{-1} \alpha+\sum_{k=0}^{\infty} \beta^k W_{-k}$, then $\bs H$ (and $\bs X$) is strictly stationary and ergodic, in which case we have that
\begin{equation}
\lim_{n\to\infty} \frac1n\sum_{t=0}^{n-1}
g\circ H_t = \E g\circ H_0
\end{equation}
almost surely, for any $\Prob_{H_0}$-integrable $g\colon S_H\to\R$, where we write $S_H \coloneqq \mathrm{supp}\, H_0$. Also, notice that, by construction, we have for all $n$, all measurable $A_0,\dots,A_n\subseteq S \coloneqq \mathrm{supp}\,X_0$ and all $\bs h = (h_0,h_1,\dots)\in S_H^{\N}$,
\begin{align}
\nonumber\Prob\big(X_0 \in A_0,\dots,X_n\in A_n\,|\,\bs H = \bs h\big)  &=
\Prob\big(\ee^{H_0/2}Z_0\in A_0,\dots, \ee^{H_n/2}Z_n\in A_n\given \bs H = \bs h\big)\\
\nonumber&\overset{(*)}=\Prob\big(\ee^{h_0/2}Z_0\in A_0,\dots, \ee^{h_n/2}Z_n\in A_n\given \bs H = \bs h\big)\\
\nonumber&\overset{(**)}=\Prob\big(\ee^{h_0/2}Z_0\in A_0,\dots, \ee^{h_n/2}Z_n\in A_n\big)\\
\nonumber&= \prod_{t=0}^n
\Prob\big(\ee^{h_t/2}Z_t\in A_t\big)\\
&\overset{(***)}= \prod_{t=0}^n
\Prob\big(X_t\in A_t\,|\,\bs H = \bs h\big).\label{eq:product-disintegraion-H}
\end{align}
Where $(*)$ is yielded by the substitution principle, $(**)$ follows from the fact that $\bs Z$ and $\bs H$ are independent (as $\bs H$ only depends on $\bs W$), and $(***)$ is just a matter of repeating the previous steps. A reasoning similar to the one used in the proof of Lemma~\ref{thm:product-measure-disintegration} then tells us that $\Prob(\bs X\in\cdot\,|\,\bs H)_\omega$ is a product measure on $S^{\N}$ for almost all $\omega$. Also, notice that in particular we have that $\Prob(X_t\in A\given \bs H = \bs h) = \Prob(\ee^{h_t/2}Z_t\in A)$ for all $t$. In fact, let $\varphi\colon S_H\to M_1(S)$ be defined via $\varphi(h,A):= \Prob(\ee^{h/2}Z_0\in A)$, for $h\in S_H$ and measurable $A\subseteq S$, where we write $\varphi(h,A)$ in place of $\varphi(h)(A)$ for convenience. Since the $Z_t$'s are identically distributed, we have in particular that $\varphi(h,A) = \Prob(\ee^{h/2}Z_t\in A)$ for all $t$. The preceding derivations now allow us to conclude that
\begin{equation}\label{eq:product-disintegraion-H-2}
\varphi(H_t(\omega),A) = \Prob(X_t\in A\given H_t)_\omega = \Prob(X_t\in A\given \bs H)_\omega.
\end{equation}

We are now in place to introduce a product disintegration of $\bs X$, by defining $\xi_t^\omega(A):= \Prob(X_t\in A\given \bs H)_\omega$ for measurable $A\subseteq S$, $t\in\Z$ and $\omega\in \Omega$. To see that $\bs{\xi} = (\xi_0,\xi_1,\dots)$ is indeed a product disintegration of $\bs X$, first notice that $\xi_0(A_0)\cdots\xi_n(A_n)$ is  $\sigma(\bs \xi)$-measurable for every $n$ and every $(n+1)$-tuple $A_0,\dots,A_n$ of measurable subsetes of $S$. Moreover, defining $\psi\colon S_H^\N\to M_1(S)^\N$ via $\psi(h_0,h_1,\dots) = (\varphi(h_0), \varphi(h_1),\dots)$, we obtain, by equations~\eqref{eq:product-disintegraion-H} and \eqref{eq:product-disintegraion-H-2},
\begin{align*}
\E\big(\xi_0(A_0)\cdots\xi_n(A_n)\I_{[\bs\xi\in\bs B]}\big) &= \E\big(\varphi(H_0,A_0)\cdots\varphi(H_n,A_n)\I_{[\bs H \in \psi^{-1}(\bs B)]}\big)\\
&= \Prob\big(X_0\in A_0,\dots, X_n\in A_n, \bs H\in \psi^{-1}(\bs B)\big)\\
&= \Prob\big(X_0\in A_0,\dots, X_n\in A_n, \bs\xi\in \bs B\big),
\end{align*}
whence $\Prob(X_0\in A_0,\dots,X_n\in A_n\,|\,\bs\xi) = \xi_0(A_0)\cdots\xi_n(A_n)$, and then Lemma~\ref{thm:product-measure-disintegration} tells us that $\bs\xi$ is --- voilà --- a product disintegration of $\bs X$.

Now, since $\varphi$ is continuous and one-to-one, we have that $\varphi$ is a homeomorphism from $S_H$ onto its range whenever $S_H$ is compact (in particular, $\mathrm{range}(\varphi)$ is compact, hence measurable, in $M_1(S)$). Also, as $\xi_t = \varphi\circ H_t$ for all $t$, we have that $H_t = \varphi^{-1}\circ \xi_t$ is well defined. Suppose now that $f\colon S\to\R$ is a given continuous function. We have \[\xi_t^\omega(f) = \int_S f(x)\,\xi_t^\omega(\dd x) = \int_S f(x)\,\varphi(H_t(\omega),\dd x) =: g(H_t(\omega))\] and, as $\bs H$ is ergodic, it holds that
\(
\lim_{n\to\infty} n^{-1} \sum_{t=0}^{n-1} \xi_t(f) = \E g\circ H_0,
\)
where we know that the expectation is well defined, as 
\(
\E |g\circ H_0| \leq \E \left(\int_S |f(x)|\,\xi_0(\dd  x) \right)< \infty,
\)
with the expected value given by $\E g\circ H_0 = \int_S f(x)\,\Prob_{X_0}(\dd x)$.
We can now apply Theorem~\ref{thm:prop1} to see that
\[
\E g\circ H_0 = \lim_{n\to\infty} n^{-1} \sum_{t=0}^{n-1} f\circ X_t.
\]
The conclusion is that, for suitable $g$ of the form $g(h) = \int_S f(x)\,\varphi(h,\dd x)$, we can estimate $\E g\circ H_0$ by the data $(X_0, X_1, \dots, X_n)$ as long as $n$ is large enough, even if we cannot observe $\bs H$. Of course, this follows from ergodicity of $\bs X$, but it is interesting anyway to arrive at this result from an alternate perspective; moreover, one can use Hoeffding type inequalities as in Example~\ref{example:concentration} to easily derive a rate of convergence for sample means of $\bs X$ based on the rate of convergence of sample means of $\bs H$.
\end{example}

\section{Concluding remarks}
In this paper we prove that a sequence of Bernoulli$(p)$ random variables satisfies the strong law of large number if and only if the sequence is conditionally independent, where the conditioning is on a sequence of $[0,1]$-valued random variables, whose corresponding sequence of sample means converges almost surely. As a byproduct, we introduce the concept of a \emph{product disintegration}, which generalizes exchangeability. Some applications of the concept are illustrated in Section~\ref{sec:examples}. Further applications of product disintegrations and of Theorem~\ref{thm:prop1} appear as a possible path to be pursued in future work.

\medskip
\noindent \textbf{A road not taken.} At some point, during the development of the present paper, we delved into the possibility of translating our approach to the language of Ergodic Theory. This proved more difficult than we first thought, but we did come up with a conjecture: consider the \emph{left--shift operator} $T$ acting on $S^\N$, given by
\(
(T\bs x)_i = x_{i+1}
\)
for $\bs x = (x_0,x_1,\dots)\in S^\N$, and define $\tilde{T}\colon M_1\left(S\right)^\N\rightarrow M_1\left(S\right)^\N$ analogously. Recall that $\rho(\bs\lambda)\coloneqq \prod_{i\in\N}\lambda_i$ for $\bs\lambda\in M_1(S)^\N$.

\begin{conjecture}\label{thm:ergodicity-characterization}
Let $S$ be a compact metric space. A $T$-invariant measure $q\in M_1(S^\N)$ is $T$--ergodic if and only if there exists a $\tilde{T}$--ergodic measure $Q\in M_1(M_1\left(S\right)^\N)$ such that $q = \int \rho\left(\bs{\lambda}\right) Q\left(\dd \bs{\lambda}\right)$.
\end{conjecture}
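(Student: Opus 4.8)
The engine of the proof is the equivariance of $\rho$: for every $\bs\lambda\in M_1(S)^\N$ the product measure $\rho(\tilde T\bs\lambda)=\prod_{i\in\N}\lambda_{i+1}$ is precisely the image of $\rho(\bs\lambda)=\prod_{i\in\N}\lambda_i$ under $T$; equivalently, $\rho(\bs\lambda,T^{-1}\bs A)=\rho(\tilde T\bs\lambda,\bs A)$ for every measurable $\bs A\subseteq S^\N$. I would record this at the outset, together with its immediate corollary: if $Q\in M_1(M_1(S)^\N)$ is $\tilde T$-invariant then $q\coloneqq\int\rho(\bs\lambda)\,Q(\dd\bs\lambda)$ is automatically $T$-invariant --- push $q$ forward by $T$, move the $T$ inside the integral via the displayed relation, and undo the $\tilde T$-invariance of $Q$ --- so the asserted equivalence indeed lives inside the class of $T$-invariant measures, as stated.

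For the ``only if'' direction I would exhibit the required $Q$ explicitly as the law of the \emph{canonical} product disintegration. Take $\bs X$ with law $q$, let $\bs\delta=(\delta_{X_0},\delta_{X_1},\dots)$ be its canonical product disintegration (Lemma~\ref{thm:product-disintegration-existence}), and set $Q\coloneqq\Prob_{\bs\delta}$. The identity displayed right after Lemma~\ref{thm:rho-is-measurable} gives $q=\int\rho(\bs\lambda)\,Q(\dd\bs\lambda)$ for free, so only $\tilde T$-ergodicity of $Q$ remains. For that, consider $\iota\colon S^\N\to M_1(S)^\N$, $\iota(\bs x)\coloneqq(\delta_{x_0},\delta_{x_1},\dots)$: it is continuous (since $x\mapsto\delta_x$ is), it satisfies $\iota\circ T=\tilde T\circ\iota$, and --- because $S$ is compact metric, so $x\mapsto\delta_x$ is a homeomorphism of $S$ onto the closed set of Dirac masses --- it is a homeomorphism of $S^\N$ onto the closed, $\tilde T$-invariant subset $\iota(S^\N)\subseteq M_1(S)^\N$, which carries all the $Q$-mass. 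Hence $\iota$ is an isomorphism of the measure-preserving systems $(S^\N,T,q)$ and $(M_1(S)^\N,\tilde T,Q)$, and $\tilde T$-ergodicity of $Q$ follows from $T$-ergodicity of $q$.

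For the ``if'' direction, assume $Q$ is $\tilde T$-ergodic and $q=\int\rho(\bs\lambda)\,Q(\dd\bs\lambda)$ (so $q$ is $T$-invariant by the opening remark); I must show $q$ is $T$-ergodic. I would take an arbitrary $F\in L^\infty(q)$ with $F\circ T=F$ $q$-almost surely and prove it is $q$-a.s.\ constant, in three steps. (i) Since $q(B)=\int\rho(\bs\lambda,B)\,Q(\dd\bs\lambda)$ with $\rho(\cdot,B)\geq0$, every $q$-null set is $\rho(\bs\lambda)$-null for $Q$-a.e.\ $\bs\lambda$; applying this to the countably many sets $\{F\neq F\circ T^n\}$ shows that, for $Q$-a.e.\ $\bs\lambda$, $F$ agrees $\rho(\bs\lambda)$-a.s.\ with a function measurable for the tail $\sigma$-field $\bigcap_n\sigma(x_n,x_{n+1},\dots)$ (a backward-martingale or $\pi$-system argument, in the spirit of the proof of Lemma~\ref{thm:product-measure-disintegration}, turns ``$F=F\circ T^n$ for all $n$'' into tail-measurability). (ii) Since $\rho(\bs\lambda)=\prod_{i\in\N}\lambda_i$ has independent coordinates, Kolmogorov's zero-one law makes this tail $\sigma$-field $\rho(\bs\lambda)$-trivial, so $F$ equals $\rho(\bs\lambda)$-a.s.\ the constant $c(\bs\lambda)\coloneqq\int F\,\dd\rho(\bs\lambda)$, for $Q$-a.e.\ $\bs\lambda$. (iii) By the equivariance of $\rho$ and $F\circ T=F$, one has $c(\tilde T\bs\lambda)=\int F\,\dd\rho(\tilde T\bs\lambda)=\int (F\circ T)\,\dd\rho(\bs\lambda)=c(\bs\lambda)$ for $Q$-a.e.\ $\bs\lambda$, so the bounded measurable function $c$ is $\tilde T$-invariant, hence $Q$-a.s.\ equal to some constant $c_0$ by ergodicity of $Q$. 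Integrating $\rho(\bs\lambda)(\{F\neq c_0\})=0$ against $Q$ yields $q(\{F\neq c_0\})=0$, i.e.\ $F=c_0$ $q$-a.s., which is $T$-ergodicity of $q$.

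The routine parts are genuinely routine ($T$-invariance of $q$, the homeomorphism $\iota$, the isomorphism of systems), and the one nontrivial analytic ingredient is Kolmogorov's zero-one law applied to a \emph{non}-identically-distributed product $\prod_i\lambda_i$. I expect the main obstacle to be the measure-theoretic book-keeping in the ``if'' direction: one must legitimately pass between $q$-null sets and $\rho(\bs\lambda)$-null sets and amalgamate countably many ``$Q$-a.e.\ $\bs\lambda$'' statements into one, which is exactly where one relies on $q=\int\rho(\bs\lambda)\,Q(\dd\bs\lambda)$ being a bona fide regular disintegration --- unproblematic here, since $S^\N$ and $M_1(S)^\N$ are compact metric, so all the relevant spaces are standard Borel and Theorem~\ref{thm:bogachev-rcp} applies.
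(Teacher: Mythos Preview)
The paper does \emph{not} prove this statement: it is explicitly presented as a Conjecture in the ``A road not taken'' paragraph of the Concluding Remarks, where the authors say that translating their approach into ergodic-theoretic language ``proved more difficult than we first thought, but we did come up with a conjecture''. There is thus no proof in the paper to compare against.

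That said, your argument looks essentially correct and, if written out carefully, would actually \emph{settle} the conjecture. The ``only if'' direction via the Dirac embedding $\iota(\bs x)=(\delta_{x_0},\delta_{x_1},\dots)$ is clean: $\iota$ is a topological conjugacy between $(S^\N,T)$ and the closed $\tilde T$-invariant set $\iota(S^\N)\subseteq M_1(S)^\N$, and $\rho\circ\iota(\bs x)=\delta_{\bs x}$ gives the barycentric identity. The ``if'' direction is the substantive part, and your three-step outline (pass from $q$-null to $\rho(\bs\lambda)$-null, apply Kolmogorov's zero--one law fiberwise, then use $\tilde T$-ergodicity of $Q$ on the function $c(\bs\lambda)=\int F\,\dd\rho(\bs\lambda)$) is sound.

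One point to tighten: in step~(i) you invoke a ``backward-martingale or $\pi$-system argument'' to get tail-measurability from $F=F\circ T^n$ for all $n$. This is overcomplicated and, as written, slightly slippery because the equalities are only $\rho(\bs\lambda)$-a.s. The standard fix is to replace $F$ at the outset by the everywhere-defined, strictly $T$-invariant function $G\coloneqq\limsup_N N^{-1}\sum_{k=0}^{N-1}F\circ T^k$; then $G=F$ holds $q$-a.s., $G$ is genuinely tail-measurable (since $G=G\circ T^n$ pointwise for all $n$), Kolmogorov's zero--one law applies to $G$ under every product measure $\rho(\bs\lambda)$ without exceptional sets, and the identity $c\circ\tilde T=c$ in step~(iii) becomes an honest pointwise equality rather than a $Q$-a.s.\ one. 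Measurability of $c(\bs\lambda)=\int G\,\dd\rho(\bs\lambda)$ then follows directly from Lemma~\ref{thm:rho-is-measurable} (kernel property of $\rho$). With that adjustment the bookkeeping you flag in your last paragraph disappears.
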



\begin{appendix}
\section*{Auxiliary results}
Given a topological space $S$, we will write $B\leq S$ to mean that $B$ belongs to the $\sigma$-field generated by the topology of $S$ (i.e., that $B$ is a Borel subset $S$).

\subsection{Spaces of measures}\label{sec:spaces-of-measures}
For a compact metric space $S$ endowed with its Borel $\sigma$-field, let $\mbox{Meas}_b(S)$ denote the set of measurable, bounded maps $f\colon S\to\R$, let $C(S) \subseteq \mbox{Meas}_b(S)$ denote the set of continuous maps from $S$ to $\R$, and let $M(S)$ denote the set of finite Borel measures on $S$. 
As in the main text, $M_1(S)\subseteq M(S)$ denotes the set of Borel probability measures on $S$. For $f\in \mbox{Meas}_b(S)$, we define the evaluation map $\hat{f}\colon M(S)\to\R$ by
\(
\hat{f}(\mu) \coloneqq \int f(x)\,\mu(\dd x),
\) for $\mu\in M(S)$.

There are a few manners through which one can introduce a $\sigma$-field on $M(S)$ (and, a fortiori, on $M_1(S)$). The most commonly adopted approach is to consider in $M(S)$ the weak* topology relative to $C(S)$ (in conventional probabilistic jargon, this is simply called the \emph{weak} topology), that is, the coarsest topology on $M(S)$ for which, for every $f \in C(S)$, its evaluation map $\hat{f}$ is continuous. The following theorem presents some very useful results. Item 2 is related to Prokhorov's compactness criterion, but is not restricted to probability measures. The last three items show that, if the aim is to obtain a $\sigma$-field in $M(S)$, there is no need for topological considerations (on $M(S)$).

\begin{theorem}
\label{thm:KAL-A2-3}
Let $S$ be a compact metric space. Then
\begin{enumerate}
\item $M(S)$ is Polish (i.e. is separable and admits a complete metrization) in the weak* topology.
\item\label{thm:KAL-A2-3-itm-2} A set $K\subseteq M(S)$ is weakly* relatively compact if and only if 
\(
\sup_{\mu\in K}\hat{f}(\mu) < \infty
\)
for all nonnegative $f\in C(S)$.
\item The Borel $\sigma$-field relative to the weak* topology on $M(S)$ coincides with the $\sigma$-field $\sigma(\mathscr{C})$, where $\mathscr{C}$ can be taken as any one of the following classes:
\begin{enumerate}
\item[i.] $\mathscr{C}=\{\hat{f}\colon\, f\in C(S),\,f\geq 0\}.$
\item[ii.] $ \mathscr{C}=\{\hat{f}\colon\, f = \mathbb{I}_B,\,B\leq S\}.$
\item[iii.] $\mathscr{C}=\{\hat{f}\colon\, f \in \mathrm{Meas}_b(S)\}.$
\end{enumerate}
\end{enumerate}
\end{theorem}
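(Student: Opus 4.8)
The plan is to dispatch the three items in turn, relying on two classical inputs: that $C(S)$ is a separable Banach space when $S$ is compact metric (by Stone--Weierstrass applied to the unital algebra generated by a countable point-separating family, such as $\{x\mapsto d(x,x_n):n\geq1\}$ for a dense sequence $(x_n)$ in $S$), and the Banach--Alaoglu theorem together with the Riesz--Markov Theorem~\ref{thm:riesz-markov}, which identifies $M(S)$ with the cone of positive linear functionals on $C(S)$, for which $\|\hat\mu\|=\mu(S)$. Throughout I would fix a countable $\|\cdot\|_\infty$-dense set $\{f_k:k\geq1\}\subseteq C(S)$ with $f_1\equiv\I_S$ the constant function $1$, and work with the evaluation maps $\hat f_k\colon M(S)\to\R$.

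For item~1, I would realize $M(S)$ as a closed subset of the Polish space $\R^{\mathbb N}$ via $\Phi(\mu):=(\hat f_k(\mu))_{k\geq1}$. Injectivity of $\Phi$ holds because a dense subset of $C(S)$ separates finite measures. The recurring subtlety is that $M(S)$ is not compact, only $\sigma$-compact; the device that handles this is that weak* convergence of a sequence (or net) of measures automatically entails boundedness of their total masses, since $\hat f_1$ is just $\mu\mapsto\mu(S)$. Using this one checks: (a) $\Phi$ is a homeomorphism onto its image for the weak* topology, so that in fact the weak* topology on $M(S)$ is the initial topology of the countable family $(\hat f_k)$ — hence $M(S)$ is second countable; and (b) $\Phi(M(S))$ is closed in $\R^{\mathbb N}$, because any $(\mu_n)$ with $\Phi(\mu_n)$ convergent is mass-bounded, say $\mu_n(S)\leq c$, hence lies in $M_{\leq c}(S):=\{\mu:\mu(S)\leq c\}$, which is weak* compact (being the intersection of the Banach--Alaoglu-compact ball $\{\Phi\in C(S)^*:\|\Phi\|\leq c\}$ with the weak*-closed cone of positive functionals), so that $(\mu_n)$ has a weak* limit point $\mu$ with $\Phi(\mu)=\lim_n\Phi(\mu_n)$. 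A closed subset of a Polish space is Polish.

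For item~2, I would first note the stated condition is equivalent to $\sup_{\mu\in K}\mu(S)<\infty$ (take $f=\I_S$ one way, and use $\hat f(\mu)\leq\|f\|_\infty\mu(S)$ the other; here compactness of $S$ makes the usual tightness requirement vacuous). If $K$ is relatively compact, then the continuous map $\mu\mapsto\mu(S)$ is bounded on the compact set $\overline K$; conversely, if $\sup_{\mu\in K}\mu(S)=c<\infty$ then $K\subseteq M_{\leq c}(S)$, which is weak* compact by the argument just recalled, and since $M(S)$ is weak*-closed in $C(S)^*$ the closure of $K$ taken in $M(S)$ agrees with its closure in $C(S)^*$ and is compact. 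For item~3, write $\mathcal B:=\mathcal B(M(S))$ and chain inclusions: (i) every $\hat f$, $f\in C(S)$, is weak*-continuous, and $\hat f=\widehat{f+\|f\|_\infty}-\|f\|_\infty\widehat{\I_S}$ reduces matters to $f\geq0$, so $\sigma(\mathscr C_{\mathrm i})\subseteq\mathcal B$; (ii) by item~1 the weak* topology is the initial topology of the countable family $(\hat f_k)$, whence $\mathcal B=\sigma(\hat f_k:k\geq1)\subseteq\sigma(\mathscr C_{\mathrm i})$, so $\sigma(\mathscr C_{\mathrm i})=\mathcal B$; (iii) for $f\geq0$ continuous, choosing simple $s_m\uparrow f$ gives $\hat f=\lim_m\hat s_m$ pointwise on $M(S)$ by monotone convergence, each $\hat s_m$ a finite linear combination of maps $\mu\mapsto\mu(B)$, so $\sigma(\mathscr C_{\mathrm i})\subseteq\sigma(\mathscr C_{\mathrm{ii}})$; (iv) $\sigma(\mathscr C_{\mathrm{ii}})\subseteq\sigma(\mathscr C_{\mathrm{iii}})$ trivially; (v) by the functional monotone class theorem, the vector space $\mathcal H:=\{f\in\mathrm{Meas}_b(S):\hat f\text{ is }\sigma(\mathscr C_{\mathrm i})\text{-measurable}\}$ — which contains the multiplicative class $C(S)$ (and $\sigma(C(S))=\mathcal B(S)$), contains constants, and is closed under bounded monotone limits — equals all of $\mathrm{Meas}_b(S)$, so $\sigma(\mathscr C_{\mathrm{iii}})\subseteq\sigma(\mathscr C_{\mathrm i})$; combining (ii)--(v) yields $\sigma(\mathscr C_{\mathrm i})=\sigma(\mathscr C_{\mathrm{ii}})=\sigma(\mathscr C_{\mathrm{iii}})=\mathcal B$.

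The step I expect to demand the most care is item~1: no individual argument is deep, but the classical packaging of Prokhorov's theorem and of metrizability of the weak* topology is tailored to \emph{probability} measures, whereas $M(S)$ is merely $\sigma$-compact, so one must repeatedly invoke the observation that weak* convergence forces total masses to be bounded in order to transfer those statements. Granting item~1, items~2 and~3 are routine — item~2 being in essence Banach--Alaoglu plus the triviality of tightness on a compact space, and item~3 a standard monotone-class argument.
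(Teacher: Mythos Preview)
Your proposal is correct and considerably more self-contained than the paper's own proof. The paper simply cites Theorem~A2.3 in Kallenberg's \emph{Foundations of Modern Probability} for items~1 and~2 and for sub-items~i and~ii of item~3, and then only proves the remaining equality $\sigma(\mathscr C_{\mathrm{ii}})=\sigma(\mathscr C_{\mathrm{iii}})$ directly, via approximation of a bounded measurable $g$ by simple functions and the Dominated Convergence Theorem. Your argument, by contrast, supplies the full content: the embedding $\Phi\colon M(S)\hookrightarrow\R^{\N}$ and the mass-boundedness trick via $f_1=\I_S$ for item~1, Banach--Alaoglu plus the closedness of the positive cone for item~2, and the chain of inclusions with the functional monotone class theorem for item~3. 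What you gain is an essentially reference-free proof; what the paper gains is brevity. One small remark: in your step~(v) you could equally well bypass the functional monotone class theorem and argue exactly as the paper does (simple-function approximation plus DCT gives $\sigma(\mathscr C_{\mathrm{iii}})\subseteq\sigma(\mathscr C_{\mathrm{ii}})$ directly), which is marginally more elementary.
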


\begin{remark}
In summary, item (3) above says the following: if we write $\tau(\hat{f}\colon\,f\in C(S))$ for the topology on $M_1(S)$ generated by the mappings $(\hat{f}\colon\,f\in C(S))$ (that is, the weak* topology), then
\(
\sigma\left(\tau\big(\hat{f}\colon\,f\in C(S)\big)\right) = \sigma\left(\hat{f}\colon\,f\in C(S)\right),
\)
etc.
\end{remark}

\begin{proof}
For the first two items, and sub-items i. and ii. of the last item, see Theorem~A2.3 in \cite{kallenberg2002foundations}. The proof will be complete once we establish the identity
\begin{equation*}
\sigma\{\hat{f}\colon\, f \in \mathrm{Meas}_b(S)\} = \sigma\{\hat{f}\colon\, f = \mathbb{I}_B,\,B\leq S\}.
\end{equation*}
Clearly the inclusion $\sigma\{\hat{f}\colon\, f \in \mathrm{Meas}_b(S)\} \supseteq \sigma\{\hat{f}\colon\, f = \mathbb{I}_B,\,B\leq S\}$ holds.

For the converse inclusion, it is enough to show that, for every $g \in \mathrm{Meas}_b(S)$, one has $\hat{g} \in \sigma\{\hat{f}\colon\, f = \I_B,\,B\leq S\} =: \Tilde{\mathcal{B}}$. If $g = \I_B$ for some $B\leq S$, then clearly $\hat{g} \in \Tilde{\mathcal{B}}$. If $g$ is simple, with standard representation
\(
g(x) = \sum_{j=1}^n a_j \mathbb{I}_{A_j}(x),
\)
then $\hat{g}(\lambda) = \sum_{j=1}^n a_j \hat{g}_j(\lambda)$, where $g_j = \mathbb{I}_{A_j}$. Thus, $\hat{g} \in \Tilde{\mathcal{B}}$ as it is a linear combination of elements of $\Tilde{\mathcal{B}}$. For the general $g\in \mathrm{Meas}_b(S)$, let $(g_n)$ be a sequence of simple functions with $|g_n|\leq |g|$ and $g_n\to g$. Then the Dominated Convergence Theorem gives $\hat{g}(\lambda) = \lim \hat{g}_n(\lambda)$ and hence $\hat{g} \in \Tilde{\mathcal{B}}$, which concludes the result.
\end{proof}

Since $M_1(S) = \{\mu\in M(S)\colon \hat{f}(\mu) = 1\} = \hat{f}^{-1}(\{1\})$, with ${f} = \mathbb{I}_S\in C(S)$, clearly $M_1(S)$ is a weakly* closed (hence measurable) subset of $M(S)$. By item~\ref{thm:KAL-A2-3-itm-2} in Theorem~\ref{thm:KAL-A2-3}, $M_1(S)$ is weakly* compact. Indeed, more can be said: $M_1(S)$ is a compact metrizable space. Usually, this fact is stated in terms of the so called Lévy-Prokhorov metric, which works for quite general $S$ but suffers from a ``lack of interpretability''. Conveniently, when $S$ is compact there is an equivalent metric generating the weak* topology, given by
\(
d(\mu,\nu)=\sum_{k\geq 1} 2^{-k} \left| \hat{f}_k(\mu)-\hat{f}_k(\nu)\right|,
\)
where $\left\{ f_k \right\}_{k \geq 1}$ is a dense and countable subset of the unit ball in $C(S)$. The following result is an immediate corollary to Theorem 8.3.2 in \cite{bogachev2007measure}:

\begin{theorem}\label{thm:bogachev-8-3-2}
The weak* topology on $M_1(S)$ is metrizable.
\end{theorem}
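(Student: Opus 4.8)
The metric is essentially already on the table: it is the $d$ displayed just above the statement, and the claim amounts to saying that this $d$ generates the weak* topology on $M_1(S)$ — which is the content of Theorem~8.3.2 in \cite{bogachev2007measure}, but in our compact setting admits a short self-contained argument. The plan is to verify (i) that $d$ is a well-defined metric on $M_1(S)$, and (ii) that the topology it induces coincides with the weak* topology, by a double inclusion.

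For step (i): since $S$ is a compact metric space, the Banach space $C(S)$ with the supremum norm is separable (a classical fact; e.g.\ apply the Stone--Weierstrass theorem to the subalgebra generated by a countable family $x\mapsto d_S(x,s_i)$, where $\{s_i\}$ is dense in $S$). Fix $\{f_k\}_{k\geq1}$ dense in the unit ball of $C(S)$; then its linear span is supremum-norm dense in all of $C(S)$. Because $|\hat{f}_k(\mu)|\leq\norm{f_k}_\infty\leq1$ for every $\mu\in M_1(S)$, each summand in $d(\mu,\nu)=\sum_{k\geq1}2^{-k}|\hat{f}_k(\mu)-\hat{f}_k(\nu)|$ is at most $2^{1-k}$, so the series converges (and $d\leq2$); symmetry and the triangle inequality are immediate. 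If $d(\mu,\nu)=0$ then $\hat{f}_k(\mu)=\hat{f}_k(\nu)$ for every $k$, hence $\hat{f}(\mu)=\hat{f}(\nu)$ for every $f$ in the dense linear span, hence --- passing to uniform limits --- for every $f\in C(S)$; the uniqueness part of the Riesz--Markov Theorem~\ref{thm:riesz-markov} then gives $\mu=\nu$.

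For step (ii), write $\tau_d$ for the metric topology and $\tau_{w}$ for the weak* topology. The inclusion $\tau_d\subseteq\tau_{w}$ follows because $d$ is weak*-continuous on $M_1(S)\times M_1(S)$: each map $(\mu,\nu)\mapsto|\hat{f}_k(\mu)-\hat{f}_k(\nu)|$ is weak*-continuous by the very definition of the weak* topology, so the partial sums are continuous, and they converge to $d$ uniformly (the tail past $N$ is bounded by $2^{1-N}$); hence every open $d$-ball is weak*-open. For the reverse inclusion it suffices to show that each weak*-subbasic neighbourhood $U=\{\mu:\,|\hat{f}(\mu)-\hat{f}(\mu_0)|<\varepsilon\}$ of a point $\mu_0$ (where we may assume $f\in C(S)$ is not identically zero) contains some $d$-ball about $\mu_0$. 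Put $h\coloneqq f/\norm{f}_\infty$ and choose $f_k$ with $\norm{f_k-h}_\infty$ small; then $|\hat{h}(\mu)-\hat{f}_k(\mu)|\leq\norm{h-f_k}_\infty$ uniformly in $\mu$, and a three-term estimate shows that once $d(\mu,\mu_0)$ is small enough (a threshold depending on $k$, $\varepsilon$ and $\norm{f}_\infty$) we get $|\hat{h}(\mu)-\hat{h}(\mu_0)|<\varepsilon/\norm{f}_\infty$, i.e.\ $\mu\in U$. A finite intersection of such subbasic sets contains the $d$-ball of the smallest of the resulting radii, so $\tau_{w}\subseteq\tau_d$, and metrizability follows.

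I do not anticipate a genuine obstacle here: the only ingredient that is not a routine computation is the separability of $C(S)$ for compact metric $S$, which is classical; everything else is the bookkeeping of the two inclusions above. One could of course instead simply cite Theorem~8.3.2 of \cite{bogachev2007measure}, of which the statement is an immediate corollary, but recording the explicit metric $d$ seems worthwhile since it is used elsewhere.
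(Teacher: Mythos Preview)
Your argument is correct. The paper, however, does not prove this at all: it merely states that the result is an immediate corollary to Theorem~8.3.2 in \cite{bogachev2007measure}. So your proposal takes a genuinely different route --- you give a self-contained verification that the explicit metric $d$ (displayed just before the statement) generates the weak* topology, via the standard two-inclusion argument and separability of $C(S)$ --- whereas the paper simply defers to Bogachev. Your approach has the advantage of being self-contained and of justifying the specific metric $d$ that the paper writes down; the paper's approach is of course shorter. You yourself note this option at the end of your proposal, and indeed that one-line citation is exactly what the paper does.
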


\subsection{Random probability measures}

\begin{definition}
A \emph{random probability measure on $S$} is defined to be a Borel measurable map $ \xi\colon\Omega\to M_1(S) $. We shall denote the value of a random probability measure $\xi$ at the point $\omega$ by $\xi^\omega$ and, for a Borel subset $B\subseteq S$, we will use the notation $\xi^\omega(B)$ and $\xi(\omega, B)$ undistinguishedly. The latter notation is justified in Theorem~\ref{thm:random-measure-equivalence} below.
\end{definition}

\begin{lemma}[measurability of $\xi$]\label{thm:xi-measurability}
A map $\xi\colon \Omega\to M_1(S)$ is measurable if and only if the map $\omega\mapsto \int f\,\dd\xi^\omega = \hat{f}\circ\xi (\omega)$ is a random variable for every $f\in \mathscr{C}$, where $\mathscr{C}$ can be taken as any one of the sets $C(S)$, $\mathrm{Meas}_b(S)$ or $\{\mathbb{I}_B\colon\,B\leq S\}$.
\end{lemma}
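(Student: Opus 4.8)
The plan is to obtain the lemma as an essentially immediate consequence of item~3 of Theorem~\ref{thm:KAL-A2-3}, combined with the standard fact that a map into a measurable space whose $\sigma$-field is \emph{generated} by a family of real-valued maps is measurable if and only if each of those generating maps, composed with it, is a random variable.

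The first step is to record the generated-$\sigma$-field criterion in the form needed. Let $(M,\mathscr{M})$ be a measurable space with $\mathscr{M}=\sigma(g_\alpha\colon\alpha\in I)$ for some family $g_\alpha\colon M\to\R$. Then a map $h\colon\Omega\to M$ is $\mathscr{F}/\mathscr{M}$-measurable if and only if $g_\alpha\circ h$ is $\mathscr{F}$-measurable for every $\alpha\in I$. The forward implication is just composition of measurable maps. For the converse, note that $\mathscr{G}\coloneqq\{A\subseteq M\colon h^{-1}(A)\in\mathscr{F}\}$ is a $\sigma$-field on $M$ which, by hypothesis, contains every set $g_\alpha^{-1}(B)$ for Borel $B\subseteq\R$; hence $\mathscr{G}\supseteq\sigma(g_\alpha\colon\alpha\in I)=\mathscr{M}$, which is exactly measurability of $h$.

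The second step disposes of the codomain. Since $M_1(S)=\hat{f}^{-1}(\{1\})$ with $f=\I_S$ is a Borel (indeed weakly* closed) subset of $M(S)$, a map $\xi\colon\Omega\to M_1(S)$ is Borel measurable into $M_1(S)$ if and only if it is Borel measurable when viewed as a map into $M(S)$ (because the Borel subsets of $M_1(S)$ are precisely the Borel subsets of $M(S)$ contained in $M_1(S)$). It therefore suffices to apply the first step with $M=M(S)$, $h=\xi$, and $(g_\alpha)=(\hat{f}\colon f\in\mathscr{C})$: by item~3 of Theorem~\ref{thm:KAL-A2-3}, the Borel $\sigma$-field of $M(S)$ equals $\sigma(\hat{f}\colon f\in\mathscr{C})$ for each of $\mathscr{C}\in\{C(S),\ \mathrm{Meas}_b(S),\ \{\I_B\colon B\leq S\}\}$, so the criterion applies for each such choice.

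Putting the two steps together gives the claimed equivalence: $\xi$ is Borel measurable if and only if $\omega\mapsto\hat{f}\circ\xi(\omega)=\int f\,\dd\xi^\omega$ is a random variable for every $f\in\mathscr{C}$; for $\mathscr{C}=\{\I_B\colon B\leq S\}$ this is the statement that $\omega\mapsto\xi^\omega(B)$ is measurable for every $B\leq S$, since $\hat{f}(\mu)=\mu(B)$ when $f=\I_B$. I do not anticipate a genuine obstacle here: the real content is imported wholesale from Theorem~\ref{thm:KAL-A2-3}, and the only point demanding a moment's care is the harmless reduction from the subspace $M_1(S)$ to $M(S)$ in the second step.
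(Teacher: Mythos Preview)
Your proof is correct and follows essentially the same approach as the paper's: both invoke Theorem~\ref{thm:KAL-A2-3} to identify the Borel $\sigma$-field as $\sigma(\hat f\colon f\in\mathscr{C})$ and then apply the standard criterion that measurability into a space with a generated $\sigma$-field is equivalent to measurability of the compositions with the generators. The only minor difference is that you make the reduction from $M_1(S)$ to $M(S)$ explicit (the paper simply asserts the analogue of item~3 for $M_1(S)$ directly), and you state the generated-$\sigma$-field criterion as a separate lemma rather than arguing it inline; neither changes the substance.
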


\begin{proof}
By Theorem~\ref{thm:KAL-A2-3}, the Borel $\sigma$-field on $M_1(S)$ is given by
\begin{equation*}
\sigma\{\hat{f}\colon f\in C(S)\} = \sigma\{\hat{f}\colon f\in \mathrm{Meas}_b(S)\} = \sigma\{\hat{f}\colon f = \mathbb{I}_B,\,B\leq S\}.
\end{equation*}
The `only if' part follows immediately. For the `if' part, notice that $\sigma\{\hat{f}\colon f\in \mathscr{C}\}$ is the smallest $\sigma$-field containing the sets $\hat{f}^{-1}(E)$, with $f\in\mathscr{C}$ and $E\leq\R$. Now $\hat{f}\circ\xi$ is measurable for every $f\in\mathscr{C}$ iff $(\hat{f}\circ\xi)^{-1}(E)\in\mathscr{F}$ for every $f\in\mathscr{C}$ and every $E\leq \R$ iff $\xi^{-1}(G)\in\mathscr{F}$ for every $G$ of the form $\hat{f}^{-1}(E)$ with $f\in\mathscr{C}$ and $E\leq\R$. Since the class of such $G$ generates the Borel $\sigma$-field on $M_1(S)$, the result follows. 
\end{proof}

\begin{theorem}[existence of Baricenter]\label{thm:baricenter-existence}
Let $(\Omega,\mathscr{F},\Prob)$ be a probability space, $S$ a compact metric space, and let $\xi$ be a random probability measure on $S$. Then there exists a unique element $\bar{\mu} \in M_1(S)$ such that the equality
\(
\int_S f(x)\,\bar{\mu}(\dd x) = \int_\Omega\,\int_S f(x)\,\xi^\omega(\dd x)\,\Prob(\dd \omega)
\)
holds for all $f\in C(S)$.
\end{theorem}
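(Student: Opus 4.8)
The plan is to realize the baricenter as the Riesz representation of a suitable positive linear functional on $C(S)$. First I would define $L\colon C(S)\to\R$ by
\[
L(f) \coloneqq \int_\Omega \hat f\circ\xi(\omega)\,\Prob(\dd\omega) = \int_\Omega \Big(\int_S f(x)\,\xi^\omega(\dd x)\Big)\,\Prob(\dd\omega).
\]
For this to be meaningful I need the integrand to be a genuine bounded random variable: measurability of $\omega\mapsto \hat f\circ\xi(\omega)$ is precisely the content of Lemma~\ref{thm:xi-measurability} (applied with $\mathscr C = C(S)$), and boundedness is immediate, since $|\hat f\circ\xi(\omega)| \le \|f\|_\infty$ because each $\xi^\omega$ is a probability measure. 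Hence $\hat f\circ\xi\in L^1(\Prob)$ and $L$ is well defined. Note that no Fubini-type interchange is required at this stage, because $L$ is defined directly as the outer integral.

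Next I would verify that $L$ is a positive, normalized linear functional. Linearity follows from linearity of the inner integral against $\xi^\omega$ together with linearity of the outer integral against $\Prob$. If $f\geq0$ then $\hat f\circ\xi\geq0$ pointwise, so $L(f)\geq0$; and $L(\I_S) = \int_\Omega \xi^\omega(S)\,\Prob(\dd\omega) = 1$. By the Riesz--Markov Theorem~\ref{thm:riesz-markov} there is a unique (regular) Borel measure $\bar\mu$ on $S$ with $L(f) = \int_S f\,\dd\bar\mu = \hat f(\bar\mu)$ for every $f\in C(S)$; taking $f=\I_S$ gives $\bar\mu(S) = L(\I_S) = 1$, so $\bar\mu\in M_1(S)$. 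This is exactly the asserted identity.

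For uniqueness I would argue that any finite Borel measure on a compact metric space is automatically Radon, so the uniqueness clause already built into Riesz--Markov pins $\bar\mu$ down among all of $M(S)$; alternatively, two probability measures on a metric space that integrate every $f\in C(S)$ the same way must coincide, since $C(S)$ is measure-determining (one approximates indicators of closed sets monotonically by continuous functions and applies dominated convergence). There is no real obstacle in this proof: the only points that deserve a moment's care are citing the version of Riesz--Markov appropriate to a compact metric $S$ (positive linear functionals on $C(S)$ $\leftrightarrow$ finite Borel measures) and checking the measurability/integrability of $\hat f\circ\xi$, which is handled by Lemma~\ref{thm:xi-measurability}.
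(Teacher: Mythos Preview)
Your proposal is correct and follows essentially the same route as the paper: define the functional $f\mapsto \int_\Omega \hat f\circ\xi\,\dd\Prob$, check positivity, linearity and normalization, and invoke Riesz--Markov. You are simply more explicit than the paper about measurability (via Lemma~\ref{thm:xi-measurability}) and uniqueness, but the argument is the same.
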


\begin{proof}
Let $\varphi\colon C(S)\to \R$ be defined by
\(
\varphi(f)\coloneqq \int_\Omega\,\int_S f(x)\,\xi^\omega(\dd x)\,\Prob(\dd \omega).
\)
Clearly $\varphi(f)\geq 0$ if $f\geq 0$, $\varphi(\alpha f+g) = \alpha\varphi(f)+\varphi(g)$, and $\varphi(1)=1$. Thus, by the Riesz-Markov Theorem, there is an element $\bar{\mu}\in M_1(S)$ such that the stated equality holds.
\end{proof}

\begin{definition}
The unique element $\bar{\mu}$ yielded by Theorem~\ref{thm:baricenter-existence} is called the \emph{baricenter of $\xi$} (analogously: the \emph{$\Prob$-expectation of $\xi$}; analogously: the \emph{baricenter of $\Prob_\xi$}). Notation:
\begin{equation*}
\bar{\mu} =: \int \xi\,\dd \Prob =: \E_\Prob\xi.
\end{equation*}
\end{definition}
We also write simply $\E\xi$ in place of $\E_\Prob\xi$ when $\Prob$ is understood from context.

\begin{lemma}\label{thm:random-measure-basics}
Let $\xi$ be a random probability measure on $S$ and let $\E\xi$ be its baricenter. Then
\begin{enumerate}
\item (commutativity) For each measurable subset $B\subseteq S$, the equality $\E\xi\left(B\right) = \E\left(\xi\left(B\right)\right)$ holds; \label{thm:random-measure-basics-item-ii}
\item (maximal support) there exists a set $\Omega_0$ with $\Prob\left(\Omega_0\right)=0$ such that, for $\omega\notin\Omega_0$, the relation $\supp\xi^\omega\subseteq\supp\E\xi$ holds.
\end{enumerate}
\end{lemma}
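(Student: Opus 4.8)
The plan is to prove item (1) first and then bootstrap item (2) from it. For item (1), observe that both set functions $B\mapsto\E\xi(B)$ and $B\mapsto\E(\xi(B))$ define finite Borel measures on $S$: the first by definition of the baricenter, and the second because $\omega\mapsto\xi^\omega(B)$ is a bounded random variable by Lemma~\ref{thm:xi-measurability}, so $B\mapsto\int_\Omega\xi^\omega(B)\,\Prob(\dd\omega)$ is countably additive by monotone convergence. These two measures agree when integrated against any $f\in C(S)$, again by the defining property of the baricenter. I would then invoke the functional monotone class theorem: the set $\mathscr{H}$ of $f\in\mathrm{Meas}_b(S)$ for which $\hat{f}(\E\xi)=\E(\hat{f}\circ\xi)$ is a vector space containing the constants, is closed under uniformly bounded monotone limits (dominated convergence applied to both integrals, using that $\hat{f}\circ\xi$ is a random variable), and contains the multiplicative class $C(S)$, which generates the Borel $\sigma$-field of $S$ since $S$ is metric. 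Hence $\mathscr{H}=\mathrm{Meas}_b(S)$, and taking $f=\I_B$ yields $\E\xi(B)=\E(\xi(B))$. Equivalently, one may bypass the monotone class theorem and simply quote that two finite Borel measures on a compact metric space which agree against every continuous function coincide, by regularity.

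For item (2), let $U\coloneqq S\setminus\supp\E\xi$. Since $S$ is a compact metric space it is second countable, so the union of all open $\E\xi$-null sets — which equals $U$ by the very definition of the support — is in fact a countable union of such sets, and therefore $\E\xi(U)=0$. Applying item (1) with $B=U$ gives $\E(\xi(U))=\E\xi(U)=0$. As $\xi^\omega(U)\geq0$ for every $\omega$, it follows that $\xi^\omega(U)=0$ for all $\omega$ outside the $\Prob$-null set $\Omega_0\coloneqq\{\omega\colon\,\xi^\omega(U)>0\}$, which is measurable because $\omega\mapsto\xi^\omega(U)$ is a random variable. For $\omega\notin\Omega_0$ the measure $\xi^\omega$ assigns zero mass to the open set $U$, hence $U\subseteq S\setminus\supp\xi^\omega$, i.e. $\supp\xi^\omega\subseteq S\setminus U=\supp\E\xi$, as desired.

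The only genuinely delicate points are bookkeeping ones: ensuring the relevant maps $\omega\mapsto\xi^\omega(B)$ are measurable so that the expectations are well defined (handled by Lemma~\ref{thm:xi-measurability}), and, in item (2), using second countability of $S$ to guarantee that the complement of the support is itself a null set rather than merely a union of null sets. Neither step presents a real obstacle; the crux of the argument is simply the reduction of item (2) to the set identity furnished by item (1).
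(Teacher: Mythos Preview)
Your proof is correct and follows essentially the same route as the paper: for item~(1) you verify that $B\mapsto\E(\xi(B))$ is a measure agreeing with $\E\xi$ against continuous functions and then invoke regularity/monotone class, whereas the paper carries out the regularity step by hand (approximating $\I_K$ for closed $K$ by continuous $f_n$ and applying dominated convergence); for item~(2) both arguments set $U=S\setminus\supp\E\xi$ and use item~(1) to get $\E(\xi(U))=0$, though you are more careful in spelling out why $\E\xi(U)=0$ via second countability, a point the paper leaves implicit.
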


\begin{proof}
For the first item, let $\lambda(B)\coloneqq \E(\xi(B))$, $B\subseteq S$ measurable. Clearly we have $\lambda(B) \geq 0$ and $\lambda(\Omega) = 1$. Moreover, if $(B_j)$ is a sequence of measurable subsets of $S$ which are pairwise disjoint such that $B = \bigcup_{j=1}^\infty B_j$, then for each $\omega$ we have
\(
\xi^\omega(B) = \lim_{n\to\infty}\sum_{j=1}^n \xi^\omega(B_j)\leq 1.
\)
Thus, by the Dominated Convergence Theorem (DCT), we have $\lambda(B) = \sum_{j=1}^\infty \lambda(B_j)$. Therefore $\lambda$ is a probability measure on $S$. Now let $K\subseteq S$ be closed and let $(f_n)$ be a sequence of continuous functions on $S$ such that $1\geq f_n(x)\rightarrow \I_K(x)$, $x\in S$. On the one hand we have $\E\xi(K) = \lim_{n\to\infty} \int f_n(x)\, \E\xi(\dd x)$, by DCT. On the other hand, for each $\omega$ it holds that
\begin{equation*}
0\leq \xi^\omega(K) = \lim_{n\to\infty}\int f_n(x)\,\xi^\omega(\dd x)\leq \Vert f_n\Vert_\infty \leq 1,
\end{equation*}
again by DCT. Applying the DCT once more yields
\begin{equation*}
\lambda(K) = \lim_{n\to\infty}\int \int f_n(x)\,\xi^\omega(\dd x)\, \Prob(\dd\omega) = \lim_{n\to\infty} \int f_n(x)\, \E\xi(\dd x) = \E\xi(K),
\end{equation*}
where the second equality follows from the definition of the baricenter. Thus, $\E\xi$ and $\lambda$ are measures on $S$ whose values on closed sets coincide, and this implies $\E\xi = \lambda$, as asserted.

For the second item, let $U\coloneqq S\setminus\supp\left(\E\xi\right)$. Then $\xi\left(U\right)\geq 0$ and $\E\left(\xi\left(U\right)\right) = \E\xi\left(U\right) = 0$, by item~\ref{thm:random-measure-basics-item-ii}. Hence $\xi\left(U\right) = 0$ almost surely.
\end{proof}

\subsection{Probability Kernels}
\begin{definition}[see \citep{kallenberg2002foundations}, page 20]
Given two measurable spaces $(\Omega, \mathscr{F})$ and $(S,\mathscr{S})$, a map $\xi\colon \Omega\times \mathscr{S}\to \R$ is said to be a \emph{probability kernel from $(\Omega,\mathscr{F})$ to $(S,\mathscr{S})$} iff
\begin{enumerate}
\item[D1] For each $\omega\in \Omega$, the map $B\to \xi(\omega, B)$ is a probability measure on $S$.
\item[D2] For each $B\leq S$, the map $\omega\mapsto \xi(\omega,B)$ is $\mathscr{F}$-measurable.
\end{enumerate}
\end{definition}

Kernels play an important role in probability theory, appearing in many forms, for example random measures, conditional distributions, Markov transition functions, and potentials \citep{kallenberg2002foundations}. Indeed, in many circumstances, one feels more comfortable working with probability kernels instead of random probability measures as defined above, given the prevalence of the former concept in the literature. The following result connects the two concepts, showing that they are indeed equivalent: 
\begin{theorem}
\label{thm:random-measure-equivalence}
Fix two measurable spaces $(\Omega, \mathscr{F})$ and $(S,\mathscr{S})$, and assume $\mathscr{S} = \sigma(\mathscr{C})$ for some $\pi$-system $\mathscr{C}$. Let $\xi\colon \Omega\times\mathscr{S}\to \R$ be such that $\xi(\omega,\cdot)$ is a probability measure on $S$, for every $\omega\in\Omega$. Then the following conditions are equivalent:
\begin{enumerate}
\item $\xi$ is a probability kernel from $(\Omega,\mathscr{F})$ to $(S,\mathscr{S})$.
\item $\omega\mapsto \xi(\omega, \cdot)$ is an $\mathscr{F}$-measurable mapping from $\Omega$ to $M_1(S)$.
\item $\omega\mapsto \xi(\omega,E)$ is an $\mathscr{F}$-measurable mapping from $\Omega$ to $[0,1]$ for every $E\in \mathscr{C}$.
\end{enumerate}
In particular, the above equivalences hold with $\mathscr{C} = \mathscr{F}$.
\end{theorem}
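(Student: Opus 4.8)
The plan is to establish the cycle $(1)\Leftrightarrow(2)$, $(1)\Rightarrow(3)$, and $(3)\Rightarrow(1)$, and then to read off the closing assertion as the special case in which the generating $\pi$-system is taken to be all of $\mathscr S$ — in that case condition~(3) is literally axiom~D2, while D1 is part of the standing hypotheses on $\xi$. Throughout, I regard $M_1(S)$ as carrying the $\sigma$-field $\mathcal M\coloneqq\sigma\{e_B\colon B\leq S\}$ generated by the evaluation maps $e_B(\mu)\coloneqq\mu(B)$; by Theorem~\ref{thm:KAL-A2-3} this coincides with the Borel $\sigma$-field of the weak* topology whenever $S$ is a compact metric space, which covers all uses of the result in the main text.

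For $(1)\Leftrightarrow(2)$ I would invoke the standard criterion — the same one already used in the proof of Lemma~\ref{thm:xi-measurability} — that a map into a measurable space whose $\sigma$-field is generated by a family of functions is measurable if and only if each of those functions, post-composed with the map, is measurable. Applying this with the map $\xi\colon\Omega\to M_1(S)$ and the generating family $(e_B\colon B\leq S)$, and noting $e_B\circ\xi(\omega)=\xi(\omega,B)$, we get that $\xi$ is $(\mathscr F,\mathcal M)$-measurable exactly when $\omega\mapsto\xi(\omega,B)$ is $\mathscr F$-measurable for every $B\leq S$, that is, exactly when D2 holds; together with the assumed D1 this is precisely condition~(1). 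The implication $(1)\Rightarrow(3)$ is immediate since $\mathscr C\subseteq\mathscr S$.

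The one step carrying content is $(3)\Rightarrow(1)$, which I would prove with Dynkin's $\pi$--$\lambda$ theorem. Put $\mathscr D\coloneqq\{B\in\mathscr S\colon\ \omega\mapsto\xi(\omega,B)\text{ is }\mathscr F\text{-measurable}\}$. Condition~(3) gives $\mathscr C\subseteq\mathscr D$, and $\mathscr C$ is a $\pi$-system, so it suffices to check that $\mathscr D$ is a $\lambda$-system: $S\in\mathscr D$ because $\xi(\cdot,S)\equiv1$; if $A,B\in\mathscr D$ with $A\subseteq B$ then $\xi(\omega,B\setminus A)=\xi(\omega,B)-\xi(\omega,A)$ is a difference of $\mathscr F$-measurable functions — the subtraction being legitimate because $\xi(\omega,\cdot)$ is a finite measure — so $B\setminus A\in\mathscr D$; and if $B_n\in\mathscr D$ with $B_n\uparrow B$ then $\xi(\omega,B)=\lim_n\xi(\omega,B_n)$ by continuity from below, a pointwise limit of measurable functions, so $B\in\mathscr D$. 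Hence $\mathscr S=\sigma(\mathscr C)\subseteq\mathscr D$, i.e.\ D2 holds, which with D1 is condition~(1).

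I do not anticipate a real obstacle: the whole argument reduces to the generating-maps criterion for measurability and the $\pi$--$\lambda$ theorem. The only points demanding a little care are the bookkeeping that pins down which $\sigma$-field $M_1(S)$ carries (so that $(1)\Leftrightarrow(2)$ is an honest reformulation), and the appeal to finiteness of the probability measures $\xi(\omega,\cdot)$ to license the subtraction $\xi(\omega,B\setminus A)=\xi(\omega,B)-\xi(\omega,A)$ in the $\lambda$-system verification.
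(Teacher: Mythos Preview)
Your proposal is correct and follows the same approach as the paper, which simply cites Kallenberg's Lemma~1.40 together with the identification of the Borel $\sigma$-field on $M_1(S)$ with $\sigma\{\hat f\colon f=\I_B,\ B\leq S\}$ from Theorem~\ref{thm:KAL-A2-3}; you have effectively written out the standard proof of that lemma (the generating-maps criterion for $(1)\Leftrightarrow(2)$ and the $\pi$--$\lambda$ argument for $(3)\Rightarrow(1)$). Your care in specifying which $\sigma$-field $M_1(S)$ carries in the general measurable-space setting, and in noting that it agrees with the Borel $\sigma$-field in the compact metric case used in the body of the paper, is exactly the bookkeeping the paper's one-line citation sweeps under the rug.
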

\begin{proof}
This is just a restatement of Lemma~1.40 in \cite{kallenberg2002foundations}, by noticing that the Borel $\sigma$-field on $M_1(S)$ coincides with $\sigma(\hat{f}\colon\, f=\I_B,\,B\leq S)$ as ensured by Theorem~\ref{thm:KAL-A2-3}.
\end{proof}

\begin{definition}
A kernel $\xi$ from $(\Omega,\mathscr{F}_0)$ to $(S,\mathscr{S})$ is said to be a \emph{regular conditional distribution} of a random variable $X\colon\Omega\to S$ given a $\sigma$-field $\mathscr{F}_0 \subseteq \mathscr{F}$ iff the equality
\(
\int_F \xi(\omega, B)\,\Prob(\dd\omega) = \Prob([X\in B]\cap F)\)
holds for all $B\leq S$ and $F\in\mathscr{F}_0$. In particular, for each $B\leq S$ the random variable $\xi(\cdot,B)$ is a version of $\Prob(X\in B\,|\,\mathscr{F}_0)$.
\end{definition}

\begin{theorem}[Regular conditional distribution --- \citep{kallenberg2002foundations}, Theorem~6.3]\label{thm:regular-conditional-distribution}
Let $(S,\mathscr{S})$ and $(T,\mathscr{T})$ be measurable spaces, and let ${X}$ and ${\xi}$ be random variables taking values in $S$ and $T$ respectively. Assume further that $S$ is Borel. Then there exists a probability kernel $\eta$ from $T$ to $S$ such that
\(
\Prob({X}\in B\given{\xi})_\omega = \eta({\xi}(\omega),B)
\)
for all $B\in\mathscr{S}$ and all $\omega$ in a set $\Omega^*\subseteq\Omega$ with $\Prob(\Omega^*)=1$. Moreover, $\eta$ is unique almost everywhere-$\Prob_{{\xi}}$.
\end{theorem}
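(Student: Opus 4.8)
The plan is to reproduce the classical construction of a regular conditional distribution via conditional distribution functions. Since $S$ is a Borel space it is Borel isomorphic to a Borel subset of $\R$; fixing such an isomorphism $\phi$, I would transport the problem along $\phi$ and assume without loss of generality that $S$ is a Borel subset of the real line, it being understood that a kernel built on $\R$ which charges $S$ with full mass pushes back along $\phi^{-1}$ to a kernel valued in $M_1(S)$. For each rational $q$ I would pick a version of the conditional expectation $\Prob(X\leq q\given\sigma(\xi))$; being $\sigma(\xi)$-measurable, it equals $G(q,\xi(\cdot))$ for some $\mathscr T$-measurable $G(q,\cdot)\colon T\to[0,1]$, by the Doob--Dynkin lemma. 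This yields a countable family of candidate conditional-CDF values indexed by $q$.

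The next step is to repair this family into a genuine kernel. Monotonicity in $q$, the limits $0$ and $1$ as $q\to\mp\infty$ along rationals, and right-continuity along rationals are each almost sure conditions, so all of them hold simultaneously on a set $L_0\in\mathscr T$ with $\Prob_\xi(L_0)=1$. On $L_0$ I would set $F(\lambda,x)\coloneqq\inf\{G(q,\lambda)\colon q\in\mathbb{Q},\ q>x\}$, which for each $\lambda\in L_0$ is a bona fide distribution function, and off $L_0$ I would let $F(\lambda,\cdot)$ be a fixed distribution function supported in $S$. Taking $\eta(\lambda,\cdot)$ to be the Lebesgue--Stieltjes measure of $F(\lambda,\cdot)$ gives a probability measure for every $\lambda$; measurability of $\lambda\mapsto\eta(\lambda,B)$ follows by a monotone-class argument starting from the half-lines $B=(-\infty,q]$, on which $\eta(\lambda,B)$ equals the (extended) function $G(q,\lambda)$. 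Hence $\eta$ is a probability kernel from $T$ to $\R$.

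It then remains to identify $\eta$ with a conditional distribution, to check concentration on $S$, and to prove uniqueness. For $B=(-\infty,q]$ and $L\in\mathscr T$, unwinding definitions gives $\int_{\xi^{-1}(L)}\eta(\xi,B)\,\dd\Prob=\int_{\xi^{-1}(L)}G(q,\xi)\,\dd\Prob=\Prob(X\in B,\,\xi\in L)$, since $G(q,\xi)$ is a version of $\Prob(X\in B\given\sigma(\xi))$; the family of Borel $B$ for which this identity holds is a $\lambda$-system containing the $\pi$-system of half-lines, hence is all Borel sets, so $\eta(\xi(\cdot),B)$ is a version of $\Prob(X\in B\given\sigma(\xi))$ for every Borel $B$ --- which, $\eta(\xi(\omega),\cdot)$ being an actual probability measure for $\omega$ in the full-measure set $\xi^{-1}(L_0)$, is exactly the asserted regular conditional distribution. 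Taking $B=\R\setminus S$ yields $\E\,\eta(\xi,\R\setminus S)=\Prob(X\notin S)=0$, so $\eta(\lambda,S)=1$ off a further $\Prob_\xi$-null set, on which I reset $\eta(\lambda,\cdot)$ to the fixed measure on $S$; composing with $\phi^{-1}$ returns the sought kernel from $T$ to $S$. For uniqueness, if $\eta_1,\eta_2$ both work then, for each fixed $B$, $\lambda\mapsto\eta_i(\lambda,B)$ are both Radon--Nikodym densities of $L\mapsto\Prob(X\in B,\,\xi\in L)$ with respect to $\Prob_\xi$, hence agree $\Prob_\xi$-a.e.; intersecting over $B$ in a countable generating $\pi$-system gives $\eta_1(\lambda,\cdot)=\eta_2(\lambda,\cdot)$ for $\Prob_\xi$-a.e.\ $\lambda$.

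The main obstacle is this repair step: each $G(q,\cdot)$ is determined only up to a $\Prob_\xi$-null set, so one must collect all of the exceptional sets at once, confirm that the infimum-over-rationals recipe produces a right-continuous distribution function for \emph{every} $\lambda$ in the good set, and secure joint measurability in $\lambda$. The careful bookkeeping of null sets --- together with the reduction to a Borel subset of $\R$ --- is where the substance lies; the verification of the integral identity and of uniqueness is then routine $\pi$-$\lambda$ machinery. (Since this is precisely Kallenberg's Theorem~6.3, in the paper itself one may simply cite it rather than carry out the above.)
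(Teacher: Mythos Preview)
Your sketch is correct and follows the classical construction (reduce to $S\subseteq\R$ via a Borel isomorphism, build a conditional CDF on the rationals using Doob--Dynkin, repair on a $\Prob_\xi$-full set, extend by a $\pi$-$\lambda$ argument, then check concentration on $S$ and uniqueness). This is essentially Kallenberg's own proof of his Theorem~6.3.

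However, the paper does not prove this statement at all: it is listed in the appendix as an auxiliary result and is simply attributed to \cite{kallenberg2002foundations}, Theorem~6.3, with no argument given. You already anticipated this in your final parenthetical remark. So there is nothing to compare --- your proposal supplies a proof where the paper only supplies a citation.
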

\begin{remark}
In the conditions of the above Theorem, one can introduce a probability kernel $\eta'$ from $(\Omega,\sigma(\xi))$ to $(S,\mathscr{S})$ by putting $\eta'(\omega,B)\coloneqq \eta(\xi(\omega),B)$. Also, if $\xi$ is the identity map and $(T,\mathscr{T}) = (\Omega, \mathscr{F}_0)$, where $\mathscr{F}_0 \leq \mathscr{F}$, then automatically $\eta$ is a kernel from $\Omega$ to $S$ which is a regular version of $\Prob(X\in \cdot\,|\,\mathscr{F}_0)$.
\end{remark}

\subsection{Product spaces}

\begin{lemma}[Product and Borel $\sigma$-fields --- \citep{kallenberg2002foundations}, Lemma~1.2]\label{thm:product-topology}
Let $S$ have topology $\tau$ and let $\mathscr{S}\coloneqq\sigma\left(\tau\right)$ be the Borel $\sigma$-field on $S$. Let $\tau^\N$ be the product topology on $S^\N$ and let $\mathscr{S}^\N$ be the product (cylindrical) $\sigma$--field on $S^\N$. If $S$ is metrizable and separable, then $\sigma(\tau^\N)=\mathscr{S}^\N$, that is, $\mathscr{S}^\N$ is the Borel $\sigma$-field on $S^\N$.
\end{lemma}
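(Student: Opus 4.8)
\textbf{Proof proposal for Lemma~\ref{thm:product-topology}.}

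The plan is to prove the two inclusions $\sigma(\tau^\N)\subseteq\mathscr{S}^\N$ and $\mathscr{S}^\N\subseteq\sigma(\tau^\N)$ separately, the second being the one requiring metrizability and separability.

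First I would dispose of the easy inclusion $\mathscr{S}^\N\subseteq\sigma(\tau^\N)$ under no extra hypotheses on $S$ beyond the ones needed for it --- wait, actually it is the reverse that is easy. Let me be careful: the coordinate projections $\pi_i\colon S^\N\to S$ are continuous with respect to $\tau^\N$, hence Borel measurable with respect to $\sigma(\tau^\N)$; since $\mathscr{S}^\N$ is by definition the smallest $\sigma$-field making all the $\pi_i$ measurable, we get $\mathscr{S}^\N\subseteq\sigma(\tau^\N)$ immediately, with no hypothesis on $S$ at all. So the content of the lemma is the opposite inclusion $\sigma(\tau^\N)\subseteq\mathscr{S}^\N$, equivalently that every $\tau^\N$-open subset of $S^\N$ lies in $\mathscr{S}^\N$.

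For this I would use separability and metrizability of $S$ as follows. Fix a metric $d$ on $S$ inducing $\tau$ and a countable dense set $D\subseteq S$. Then the countable family of open balls $\{B_d(x,q)\colon x\in D,\ q\in\mathbb{Q}_{>0}\}$ is a countable base for $\tau$, and hence the collection of finite ``cylinder boxes'' $U_0\times\cdots\times U_k\times S\times S\times\cdots$, with each $U_j$ drawn from this countable base, forms a countable base $\mathcal{B}$ for the product topology $\tau^\N$ (here I use the standard fact that finite products of base elements, with all but finitely many factors equal to the whole space, form a base for the product topology, together with the Lindelöf-type bookkeeping that keeps everything countable). Each such cylinder box belongs to $\mathscr{S}^\N$ since each $U_j\in\mathscr{S}$ and it is a finite-dimensional cylinder. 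Now any $\tau^\N$-open set $G$ is a union of members of $\mathcal{B}$; since $\mathcal{B}$ is countable, $G$ is a \emph{countable} union of elements of $\mathscr{S}^\N$, hence $G\in\mathscr{S}^\N$. Therefore $\sigma(\tau^\N)\subseteq\mathscr{S}^\N$, and combined with the trivial inclusion this gives $\sigma(\tau^\N)=\mathscr{S}^\N$.

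The only real subtlety --- hence the ``main obstacle'', though it is mild --- is the step asserting that the cylinder boxes over a countable base of $S$ form a countable base of $S^\N$: one must check both that they generate the product topology (a standard definition-chase, using that a base for a product is given by finite intersections of preimages $\pi_j^{-1}(U)$ with $U$ open in $S$, and that one may shrink each $U$ to a base element) and that indexing over finitely many coordinates with base elements from a countable family yields only countably many sets (a union over $k\in\N$ of countable families is countable). Since the paper only needs this in the case $S$ compact metric --- which is certainly separable and metrizable --- no further generality is required, and I would simply cite the base-of-product-topology fact rather than reprove it.
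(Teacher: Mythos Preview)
Your argument is correct and is the standard textbook proof; note, however, that the paper does not actually prove this lemma but merely quotes it from Kallenberg~\citep[Lemma~1.2]{kallenberg2002foundations}. The proof you give is essentially the one found there, so there is nothing to compare.
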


\begin{corollary}\label{thm:subbasis}
Let $S$ be a separable metric space with topology $\tau$, and let $\mathscr{B}$ be a countable basis for $\tau$ which is stable under finite intersections. For each $n\in \N$ and each $B_0,\dots,B_n\in \mathscr{B}$, define
\begin{equation}\label{eq:subbasis}
\mathscr{C}(n;B_0,\dots,B_n)\coloneqq B_0\times \cdots \times B_n\times S\times\cdots \subseteq S^\N.
\end{equation}
Let $\mathfrak{C}$ denote the collection of all sets of the form \eqref{eq:subbasis}. Then $\mathfrak{C}$ is a countable $\pi$-system which generates the Borel $\sigma$-field on $S^\N$.
\end{corollary}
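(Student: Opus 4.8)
The plan is to prove Corollary~\ref{thm:subbasis} in two stages: first establish that $\mathfrak{C}$ is a countable $\pi$-system, then establish that it generates the Borel $\sigma$-field on $S^\N$, the latter being essentially a book-keeping consequence of Lemma~\ref{thm:product-topology}.

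\textbf{Step 1: $\mathfrak{C}$ is a countable $\pi$-system.} Countability is immediate: each element of $\mathfrak{C}$ is indexed by a choice of $n\in\N$ together with an $(n+1)$-tuple $(B_0,\dots,B_n)$ from the countable basis $\mathscr{B}$, so $\mathfrak{C}$ injects into $\bigcup_{n\in\N}\mathscr{B}^{n+1}$, a countable union of countable sets. For the $\pi$-system property, I would take two sets $\mathscr{C}(n;B_0,\dots,B_n)$ and $\mathscr{C}(m;B_0',\dots,B_m')$ with, say, $n\leq m$, and compute their intersection coordinatewise: in coordinate $i\leq n$ it is $B_i\cap B_i'$; in coordinate $n<i\leq m$ it is $S\cap B_i' = B_i'$; in coordinate $i>m$ it is $S$. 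Since $\mathscr{B}$ is stable under finite intersections, $B_i\cap B_i'\in\mathscr{B}$ for $i\leq n$, and of course $B_i'\in\mathscr{B}$; hence the intersection is again of the form \eqref{eq:subbasis}, i.e. lies in $\mathfrak{C}$. (One should note the harmless convention that if some $B_i\cap B_i' = \emptyset$ then the whole product is empty, which we may either treat as lying in $\mathfrak{C}$ by allowing $\emptyset\in\mathscr{B}$, or handle by the standard remark that a $\pi$-system need not contain $\emptyset$; either way this is a non-issue.)

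\textbf{Step 2: $\sigma(\mathfrak{C})$ is the Borel $\sigma$-field on $S^\N$.} By Lemma~\ref{thm:product-topology}, since $S$ is separable metric, the Borel $\sigma$-field on $S^\N$ equals the product $\sigma$-field $\mathscr{S}^\N$, i.e. the $\sigma$-field generated by the coordinate projections $\pi_i\colon S^\N\to S$. So it suffices to show $\sigma(\mathfrak{C}) = \mathscr{S}^\N$. For the inclusion $\sigma(\mathfrak{C})\subseteq\mathscr{S}^\N$: each set $\mathscr{C}(n;B_0,\dots,B_n) = \bigcap_{i=0}^n \pi_i^{-1}(B_i)$ is a finite intersection of measurable cylinders, hence in $\mathscr{S}^\N$. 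For the reverse inclusion, it is enough to show every generator $\pi_i^{-1}(U)$, with $U$ open in $S$, lies in $\sigma(\mathfrak{C})$ (open sets generate $\mathscr{S}$, and $\mathscr{S}^\N$ is generated by $\{\pi_i^{-1}(U): i\in\N,\ U\in\tau\}$). Writing the open set $U$ as a countable union $U = \bigcup_k B_{k}$ of basis elements $B_k\in\mathscr{B}$, we get $\pi_i^{-1}(U) = \bigcup_k \pi_i^{-1}(B_k)$, and each $\pi_i^{-1}(B_k) = \mathscr{C}(i; S,\dots,S,B_k)$ — that is, $B_k$ in coordinate $i$ and $S$ elsewhere — is an element of $\mathfrak{C}$, provided $S\in\mathscr{B}$; if the given basis does not contain $S$, simply adjoin it (this preserves countability and stability under finite intersections). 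Hence $\pi_i^{-1}(U)\in\sigma(\mathfrak{C})$, completing the reverse inclusion.

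\textbf{Expected main obstacle.} There is no deep obstacle here; the only things requiring care are the bookkeeping details around the basis $\mathscr{B}$ — namely ensuring $S$ (and possibly $\emptyset$) can be taken to belong to $\mathscr{B}$ without disturbing countability or stability under finite intersections — and being explicit that ``$\mathfrak{C}$ generates the \emph{Borel} $\sigma$-field'' relies on the separability hypothesis through Lemma~\ref{thm:product-topology}, since in general the product $\sigma$-field can be strictly smaller than the Borel $\sigma$-field of the product topology. Apart from that, the argument is a routine unwinding of definitions.
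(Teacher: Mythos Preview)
Your proof is correct and follows the same two-step structure as the paper's. Step~1 (countability and the $\pi$-system property) is essentially identical to the paper's argument. For the reverse inclusion in Step~2, the paper instead shows that every \emph{infinite} product $A_0\times A_1\times\cdots$ with $A_i\in\mathscr{B}$ lies in $\sigma(\mathfrak{C})$, by writing it as the decreasing intersection $\bigcap_{m\geq 0}\mathscr{C}(m;A_0,\dots,A_m)$ of its finite truncations; you work with single-coordinate cylinders $\pi_i^{-1}(B)$ instead. Both routes are standard and equally short. One small point: your ``adjoin $S$ to $\mathscr{B}$'' move is not quite complete as written, since enlarging $\mathscr{B}$ a priori enlarges $\mathfrak{C}$, and the statement concerns the original $\mathfrak{C}$. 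You should add the one-line observation that this does not change $\sigma(\mathfrak{C})$: writing $S=\bigcup_j C_j$ with $C_j\in\mathscr{B}$ (possible since $\mathscr{B}$ is a basis) exhibits each cylinder $\mathscr{C}(i;S,\dots,S,B_k)$ as a countable union $\bigcup_{j_0,\dots,j_{i-1}}\mathscr{C}(i;C_{j_0},\dots,C_{j_{i-1}},B_k)$ of members of the original $\mathfrak{C}$.
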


\begin{remark}
The set $\mathscr{B}$ above can be obtained as follows: let $D$ be a countable, dense subset of $S$, and let $\mathscr{D}$ be the collection of all balls with centers in $D$ and rational radii. Now let $\mathscr{B}_n$, $n\geq1$, be the collection formed by all intersections of $n$ elements of $\mathscr{D}$, that is, $\bs B\in\mathscr{B}_n$ iff there exist $x_1,\dots,x_n\in D$ and $r_1,\dots,r_n\in\mathbb Q$ such that $\bs B = \bigcap\nolimits_{i=1}
^n\mathrm{ball}(x_i; r_i)$. Clearly, each $\mathscr{B}_n$ is countable. Now let $\mathscr{B}:=\bigcup\nolimits_{n\geq1} \mathscr{B}_n$.
\end{remark}

\begin{proof}[Proof of Corollary~\ref{thm:subbasis}] We begin by proving that $\mathfrak{C}$ is indeed a $\pi$-system. Clearly, $\mathfrak{C}$ is non-empty. Now, let $A_0, \dots, A_m, B_0, \dots, B_n \in \mathscr{B}$ and consider $\mathscr{C}(m; A_0, \dots, A_m), \mathscr{C}(n; B_0, \dots, B_n)$. Without loss of generality, suppose $n \geq m$. Then 
\begin{align*}
    \mathscr{C}(m; A_0, \dots, A_m) \cap \mathscr{C}(n; B_0, \dots, B_n) &= A_0 \cap B_0 \times \dots \times A_m \cap B_m \times \dots \times B_n \times S \times \dots\\
    &= \mathscr{C}(n; A_0\cap B_0, \dots, A_m\cap B_m, \dots, B_n ).
\end{align*}
Since $\mathscr{B}$ is stable under finite intersections, $A_i \cap B_i \in \mathscr{B}$, for each $i \in \{0, \dots, m\}$, and the result follows.

It remains to show that $\mathfrak{C}$ generates the Borel $\sigma$-field on $S^{\mathbb{N}}$. Clearly any $\bs{A}\in\mathfrak{C}$ is a Borel set in $S^\N$. For the reverse inclusion, by Lemma \ref{thm:product-topology} and the facts that $\sigma(\tau) = \sigma(\mathscr{B})$ and $\sigma(\tau^\N) = \sigma(\mathscr{B}^\N)$, it suffices to prove that given $\{A_i\}_{i =0}^{+\infty}$ a sequence of elements in $\mathscr{B}$, it holds that $\bs A = A_0 \times A_1 \times \dots \times A_n \times A_{n+1} \times \dots \in \sigma(\mathfrak{C})$. For each $m \in \mathbb{N}$, define 
$\bs A^{(m)} = A_0 \times \dots \times A_m \times S \times \dots$, i.e., $\bs A^{(m)} = \mathscr{C}(m; A_0, \dots, A_m)$. Surely, for each $m \in \mathbb{N}$, $\bs A^{(m)} \in \sigma(\mathfrak{C})$. Furthermore, note that $\bs A = \cap_{m=0}^{+\infty} \bs A^{(m)}$, so that $A \in \sigma(\mathfrak{C})$.
\end{proof}

\subsection{Additional auxiliary results}
\begin{definition}
Two measurable spaces $(M,\mathscr{M})$ and $(N,\mathscr{N})$ are said to be \emph{Borel isomophic} if there exists a bijection $h\colon M\to N$ such that both $h$ and $h^{-1}$ are measurable. A measurable space $(M,\mathscr{M})$ is said to be a \emph{Borel space} if it is Borel isomorphic to a Borel subset of the interval $[0,1]$.
\end{definition}

\begin{definition}
A topological space $M$ is said to be a \emph{Polish space} iff it is separable and admits a complete metrization.
\end{definition}

\begin{theorem}[\citep{kallenberg2002foundations}, Theorem~A1.2]
Let $M$ be a Polish space. Then every Borel subset of $M$ is a Borel space.
\end{theorem}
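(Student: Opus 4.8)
The plan is to reduce the statement to the single case $M=[0,1]^{\N}$ (the Hilbert cube) and then to construct an explicit Borel isomorphism between $[0,1]^{\N}$ and $[0,1]$; everything else is bookkeeping.

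First I would set up an embedding into the Hilbert cube. If $(M,d)$ is Polish we may assume $d\le 1$; fixing a countable dense set $(x_k)_{k\ge1}$, the map $\phi\colon M\to H:=[0,1]^{\N}$, $\phi(x)=(d(x,x_k))_{k\ge1}$, is a homeomorphism onto its image $M':=\phi(M)$ --- injectivity and continuity are immediate, and $\phi^{-1}$ is continuous because $\phi(y_n)\to\phi(y)$ forces $d(y_n,y)\to 0$ after approximating $y$ by points of the dense sequence. By Alexandrov's theorem a completely metrizable subspace of a Polish space is a $G_{\delta}$ set, so $M'$ is a $G_{\delta}$ --- in particular Borel --- subset of the compact metrizable space $H$. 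Hence for Borel $B\subseteq M$ the set $\phi(B)$ is Borel in the subspace $M'$, thus of the form $M'\cap C$ with $C$ Borel in $H$, thus Borel in $H$; and $\phi$ restricts to an isomorphism of measurable spaces between $B$ (with its Borel $\sigma$-field) and $\phi(B)$ (with the trace of the Borel $\sigma$-field of $H$). So it suffices to show that every Borel subset of $H$ is a Borel space.

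The core step is to build a Borel isomorphism $\Phi\colon H\to[0,1]$, which I would do in three stages, each producing a Borel isomorphism. (i) $[0,1]\cong 2^{\N}$: the binary-expansion map $(a_k)\mapsto\sum_k a_k 2^{-k}$ is a continuous surjection $2^{\N}\to[0,1]$ that is injective off a countable set, and one promotes it to a genuine Borel isomorphism via the Borel Schröder--Bernstein theorem (if $X$ and $Y$ embed into one another by injective Borel maps with Borel image and Borel inverse, then they are Borel isomorphic; proof by the usual ``ancestry'' partition, each piece being Borel). (ii) $(2^{\N})^{\N}\cong 2^{\N}$: any bijection $\N\times\N\to\N$ induces a homeomorphism $2^{\N\times\N}\to 2^{\N}$, and $2^{\N\times\N}=(2^{\N})^{\N}$. (iii) $H=[0,1]^{\N}\cong(2^{\N})^{\N}$ by applying (i) in each coordinate, using that a countable product of Borel isomorphisms is a Borel isomorphism. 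Composing yields $H\cong(2^{\N})^{\N}\cong 2^{\N}\cong[0,1]$.

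To finish, given the Borel isomorphism $\Phi\colon H\to[0,1]$, any Borel $B\subseteq H$ has Borel image $\Phi(B)\subseteq[0,1]$ and $\Phi$ restricts to an isomorphism of measurable spaces between $B$ and $\Phi(B)$; hence $B$ is Borel isomorphic to a Borel subset of $[0,1]$, i.e.\ a Borel space. Combined with the reduction of the second paragraph, this proves the theorem. I expect the genuinely delicate points to be (a) a precise statement and proof of the Borel Schröder--Bernstein theorem together with careful treatment of the countable ``bad sets'' on which binary expansion fails to be bijective, and (b) checking at each stage that the constructions respect passage to subspaces and their trace $\sigma$-fields, so that Borel subsets are carried to Borel subsets throughout.
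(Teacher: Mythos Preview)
The paper does not prove this statement at all: it is listed in the appendix as an auxiliary result with a bare citation to Kallenberg, Theorem~A1.2, and no argument is supplied. So there is no ``paper's own proof'' to compare against.

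Your outline is a standard and correct route to the Borel isomorphism theorem: embed the Polish space in the Hilbert cube via the Kuratowski map, invoke Alexandrov/Mazurkiewicz to see the image is $G_\delta$ (hence Borel), and then reduce to constructing a Borel isomorphism $[0,1]^{\N}\cong[0,1]$ via $2^{\N}$ and a Borel Schr\"oder--Bernstein argument. The two points you flag as delicate are exactly the right ones. For (a), note that the binary-expansion surjection $2^{\N}\to[0,1]$ is genuinely two-to-one on a countable set, so you cannot feed it directly into Schr\"oder--Bernstein; the cleanest fix is to use instead the ternary Cantor injection $2^{\N}\hookrightarrow[0,1]$ (a homeomorphism onto its image) in one direction and a Borel section of the binary map in the other, or else to explicitly re-match the countable bad sets. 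For (b), the claim that a countable product of Borel isomorphisms is a Borel isomorphism deserves one line of justification via the product $\sigma$-field characterization. With those details filled in, your argument goes through.
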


\begin{lemma}[Doob-Dynkin Lemma --- \citep{kallenberg2002foundations}, Lemma~1.13]\label{thm:doob-dynkin}
Let $(M,\mathscr{M})$ and $(N,\mathscr{N})$ be measurable spaces, and let $f\colon\Omega\to M$ and $g\colon \Omega\to N$ be any two given functions. If $M$ is Borel, then $f$ is $\sigma(g)$-measurable if and only if there exists a measurable mapping $h\colon N\to M$ such that $f = h\circ g$,
\end{lemma}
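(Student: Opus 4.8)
The plan is to treat the two implications separately, dispatching the `if' direction in one line and devoting the real effort to the `only if' direction, which I would break into a real-valued case followed by a reduction that invokes the Borel hypothesis on $M$. For the `if' part, suppose $f=h\circ g$ with $h\colon N\to M$ measurable. Then for each $A\in\mathscr M$ we have $f^{-1}(A)=g^{-1}\big(h^{-1}(A)\big)$, and since $h^{-1}(A)\in\mathscr N$ this preimage belongs to $\sigma(g)$; hence $f$ is $\sigma(g)$-measurable.

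For the `only if' direction I would first settle the case $M=\R$, assuming $f\colon\Omega\to\R$ is $\sigma(g)$-measurable, and construct $h$ by the usual ascent through simple functions. If $f=\I_A$ with $A\in\sigma(g)$, then by the very definition of $\sigma(g)$ we have $A=g^{-1}(C)$ for some $C\in\mathscr N$, so $f=\I_C\circ g$ and $h=\I_C$ works. By linearity the same holds for simple $\sigma(g)$-measurable $f=\sum_{i=1}^k a_i\I_{A_i}$, with $h=\sum_{i=1}^k a_i\I_{C_i}$ where $A_i=g^{-1}(C_i)$. For nonnegative $f$ I would pick simple $\sigma(g)$-measurable functions $f_n\uparrow f$, obtain $h_n$ with $f_n=h_n\circ g$, and set $h:=\limsup_n h_n$, which is $\mathscr N$-measurable as a $\limsup$ of measurable maps; redefining $h$ to be $0$ on the measurable set where it is infinite changes nothing on $\mathrm{range}(g)$, since there $h\circ g=\limsup_n f_n=f$ is finite. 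The general real-valued case then follows by splitting $f=f^+-f^-$.

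It remains to remove the restriction $M=\R$, and this is where the hypothesis that $M$ is Borel is indispensable: by definition there is a Borel isomorphism $\phi\colon M\to M_0$ onto some $M_0\leq[0,1]$. Then $\phi\circ f\colon\Omega\to[0,1]$ is $\sigma(g)$-measurable, so the real-valued case yields a measurable $h_0\colon N\to\R$ with $\phi\circ f=h_0\circ g$. Since $\phi\circ f$ takes values in $M_0$, the map $h_0$ already lands in $M_0$ on $\mathrm{range}(g)$; redefining $h_0$ to equal a fixed element $m_0\in M_0$ off the measurable set $h_0^{-1}(M_0)$ produces a measurable $\tilde h_0\colon N\to M_0$ with $\tilde h_0\circ g=\phi\circ f$ still, and then $h:=\phi^{-1}\circ\tilde h_0$ is measurable (as $\phi^{-1}$ is measurable) and satisfies $h\circ g=\phi^{-1}\circ\phi\circ f=f$. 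The main obstacle I anticipate is not any single estimate but the bookkeeping in this last reduction --- in particular controlling the behaviour of the constructed maps off $\mathrm{range}(g)$, where $h$ is forced to be somewhat arbitrary, while preserving measurability, and making sure the Borel hypothesis is deployed exactly at the point where the factorization would otherwise fail.
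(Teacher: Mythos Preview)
The paper does not actually prove this lemma; it is stated in the appendix as an auxiliary result and simply attributed to Kallenberg (Lemma~1.13 in \textit{Foundations of Modern Probability}), with no argument given. Your proposal is correct and is essentially the standard proof one finds in Kallenberg: the `if' direction is immediate, and the `only if' direction proceeds by first treating the real-valued case via the ascent indicator $\to$ simple $\to$ nonnegative (using $\limsup$) $\to$ signed, and then reducing the general $M$ to the real case through a Borel isomorphism $\phi\colon M\to M_0\subseteq[0,1]$. Your handling of the off-$\mathrm{range}(g)$ redefinitions is careful and correct; since $h\circ g$ only samples $h$ on $\mathrm{range}(g)$, these modifications preserve the factorization while ensuring measurability and the right codomain.
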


\begin{theorem}[Riesz-Markov]\label{thm:riesz-markov}
Let $S$ be a locally compact Hausdorff space and $\varphi$ a positive linear functional on $C_{\mathrm{c}}(S)$. Then there is a unique Radon measure $\mu$ on the Borel $\sigma$-field of $S$ for which
\(
\varphi(f) = \int_S f(x)\,\mu(\dd x)
\)
for all $f\in C_{\mathrm{c}}(S)$. In particular, if $S$ is compact and $\varphi(1) = 1$, then $\mu$ is a probability measure.
\end{theorem}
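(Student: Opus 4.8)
The plan is to follow the classical constructive route: build the Radon measure directly from the functional, by prescribing its values on open sets and then extending through an outer measure.

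\emph{Construction of the outer measure.} For an open $U\subseteq S$ set $\mu^*(U)\coloneqq\sup\{\varphi(f)\colon f\in C_{\mathrm c}(S),\ 0\le f\le 1,\ \supp f\subseteq U\}$; since $S$ is locally compact Hausdorff, Urysohn's lemma guarantees that this collection of test functions is nonempty, and the quantity is manifestly monotone in $U$. For arbitrary $E\subseteq S$, put $\mu^*(E)\coloneqq\inf\{\mu^*(U)\colon U\supseteq E\ \text{open}\}$. Positivity and linearity of $\varphi$, combined with partitions of unity subordinate to finite subcovers of compact supports, yield countable subadditivity, so $\mu^*$ is an outer measure.

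\emph{Measurability and regularity.} The first genuinely delicate point is that every open set --- and hence, by Carathéodory's criterion, every Borel set --- is $\mu^*$-measurable; this rests on the ability to split a test function $f$ with support in an open set $V\supseteq U$ as $f=g+h$ with $\supp g$ a compact subset of $U$ and $\supp h$ avoiding a compact core of $U$, so that $\varphi(f)=\varphi(g)+\varphi(h)$ separates the contributions. Carathéodory's theorem then makes $\mu\coloneqq\mu^*|_{\mathscr B(S)}$ a Borel measure, and the construction itself shows $\mu$ is finite on compact sets, outer regular on Borel sets, and inner regular on open sets --- i.e., $\mu$ is Radon.

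\emph{The representation.} The heart of the proof is the identity $\varphi(f)=\int_S f\,\dd\mu$ for $f\in C_{\mathrm c}(S)$. It is enough to prove $\varphi(f)\le\int_S f\,\dd\mu$ for real $f$, since applying this to $-f$ yields equality, and the complex case then follows by linearity. Fix real $f$ with support $K$; given $\varepsilon>0$, choose a fine partition $y_0<y_1<\cdots<y_n$ of an interval containing the range of $f$, set $E_i\coloneqq\{y_{i-1}<f\le y_i\}\cap K$, pick open $U_i\supseteq E_i$ with $f<y_i+\varepsilon$ on $U_i$ and $\mu(U_i)<\mu(E_i)+\varepsilon/n$, and take a partition of unity $(h_i)_{i\le n}$ with $0\le h_i\le 1$, $\supp h_i\subseteq U_i$ and $\sum_i h_i=1$ on $K$. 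Writing $f=\sum_i h_i f$ and using the pointwise bound $h_if\le(y_i+\varepsilon)h_i$ (shifting $f$ by a constant, if needed, to neutralize sign issues) produces an upper bound for $\varphi(f)=\sum_i\varphi(h_if)$ in terms of $\sum_i(y_i+\varepsilon)\mu(U_i)$, which is then compared with $\int_S f\,\dd\mu\ge\sum_i y_{i-1}\mu(E_i)$; letting $\varepsilon\downarrow 0$ gives the inequality. Keeping track of the stray $\mu(K)$-type terms through this two-sided estimate is the step I expect to be the main obstacle.

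\emph{Uniqueness and the compact case.} If $\mu_1,\mu_2$ both represent $\varphi$, then for compact $K$ and open $U\supseteq K$ an Urysohn function $\I_K\le g\le\I_U$ gives $\mu_j(K)\le\varphi(g)\le\mu_j(U)$; outer regularity then forces $\mu_1(K)=\mu_2(K)$, and inner regularity on open sets followed by outer regularity on Borel sets propagates this to $\mu_1=\mu_2$. Finally, when $S$ is compact one has $C_{\mathrm c}(S)=C(S)$, so the constant function $\I_S\equiv 1$ lies in the domain of $\varphi$ and $\mu(S)=\varphi(\I_S)=1$; hence $\mu$ is a probability measure (its finiteness, automatic in this case, also streamlines the regularity arguments above).
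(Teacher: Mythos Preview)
The paper does not actually prove this theorem: it is listed in the appendix among the auxiliary results and merely \emph{stated}, as a classical fact to be invoked elsewhere (specifically, in the existence of the baricenter). There is therefore nothing to compare your argument against.

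That said, your outline is the standard constructive proof (essentially the one in Rudin's \emph{Real and Complex Analysis}): define $\mu^*$ on open sets via the supremum over subordinate test functions, extend by outer regularity, verify Carath\'eodory measurability of open sets, and recover the representation by slicing the range of $f$ and using a partition of unity. The steps you flag as delicate --- the splitting argument for Carath\'eodory measurability and the bookkeeping of the $\mu(K)$-type terms in the two-sided estimate --- are indeed where the work lies, but your sketch handles them correctly. The uniqueness argument via Urysohn functions and regularity is also the standard one, and your observation that $C_{\mathrm c}(S)=C(S)$ when $S$ is compact immediately yields $\mu(S)=\varphi(1)=1$ for the final claim.
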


\begin{theorem}[Kolmogorov's strong law of large numbers]\label{thm:kolmogorov-slln}
Let $\bs{X}\coloneqq\left(X_0,X_1,\dots\right)$ be an independent sequence of random variables such that $\sup_n\Var\left(X_n\right)<\infty$. Then it holds that
\[
\lim_{n\to\infty}n^{-1}\sum_{i=0}^{n-1}\left(X_i - \E X_i\right) = 0
\]
almost surely.
\end{theorem}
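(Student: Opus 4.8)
The plan is to deduce this from Kolmogorov's classical convergence theorem for series of independent random variables together with Kronecker's lemma. First I would reduce to the centered case: replacing $X_n$ by $X_n-\E X_n$ leaves all variances (hence the hypothesis $\sup_n\Var(X_n)<\infty$) unchanged and turns the conclusion into the assertion that $n^{-1}\sum_{i=0}^{n-1}X_i\to 0$ almost surely. So assume henceforth that $\E X_n=0$ for all $n$, and set $C\coloneqq\sup_n\Var(X_n)<\infty$.

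Next I would consider the weighted series $\sum_{n\geq 0}(n+1)^{-1}X_n$. Its terms are independent and centered, and
\[
\sum_{n=0}^{\infty}\Var\!\left(\frac{X_n}{n+1}\right)=\sum_{n=0}^{\infty}\frac{\Var(X_n)}{(n+1)^2}\leq C\sum_{n=0}^{\infty}\frac{1}{(n+1)^2}<\infty .
\]
By Kolmogorov's one-series theorem --- a series of independent, mean-zero random variables with summable variances converges almost surely --- the partial sums $\sum_{n=0}^{N}(n+1)^{-1}X_n$ converge to a finite limit on an event of probability one. Then I would invoke Kronecker's lemma: if $(b_n)$ is a nondecreasing sequence of positive reals with $b_n\to\infty$ and $\sum_n a_n/b_n$ converges, then $b_n^{-1}\sum_{k=0}^{n}a_k\to 0$. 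Applying this pathwise with $a_n=X_n$ and $b_n=n+1$, on the full-probability event where the series converges, yields $n^{-1}\sum_{i=0}^{n-1}X_i\to 0$ almost surely, which is the claim.

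The main obstacle --- depending on how self-contained one wants the appendix to be --- is Kolmogorov's one-series theorem itself, whose standard proof rests on Kolmogorov's maximal inequality. If one prefers to avoid citing it, an alternative is to argue directly along the subsequence $n=k^2$: Chebyshev's inequality combined with the Borel--Cantelli lemma gives $k^{-2}\sum_{i=0}^{k^2-1}X_i\to 0$ almost surely, and Kolmogorov's maximal inequality controls the maximal fluctuation of the partial sums between consecutive squares, which again via Borel--Cantelli shows these fluctuations are $o(k^2)$; since every $m$ lies between two consecutive squares, $m^{-1}\sum_{i=0}^{m-1}X_i\to 0$ follows by a sandwiching argument. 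Either route reduces the theorem to the maximal inequality plus routine estimates, so I would present the series-plus-Kronecker version and cite the convergence theorem and Kronecker's lemma as standard.
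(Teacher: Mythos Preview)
Your argument is correct and is the standard textbook proof: reduce to the centered case, observe that the variances of $(n+1)^{-1}X_n$ are summable, apply Kolmogorov's one-series theorem to get almost sure convergence of $\sum (n+1)^{-1}X_n$, and then invoke Kronecker's lemma. The alternative subsequence-plus-maximal-inequality route you sketch is also fine.

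There is nothing to compare, however, because the paper does not prove this theorem. It is listed in the appendix among the ``Additional auxiliary results'' as a classical fact and is simply cited (it is invoked in the proof of Theorem~\ref{thm:prop1} to handle the conditionally independent sequence $(Z_n\mid\bs\xi)$). So your proposal goes beyond what the paper supplies; if the intention is to make the appendix self-contained, your series-plus-Kronecker presentation is the natural choice.
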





\begin{theorem}[range and inverse, Kuratowski --- \citep{kallenberg2002foundations}, Theorem~A1.3]\label{thm:kuratowski} Let $f$ be a measurable bijection between two Borel spaces $S$ and $T$. Then the inverse $f^{-1}$ is again measurable.
\end{theorem}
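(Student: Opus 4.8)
The plan is to reformulate the conclusion as follows: writing $f^{-1}\colon T\to S$, measurability of $f^{-1}$ is precisely the statement that $f(B)\leq T$ whenever $B\leq S$, since $(f^{-1})^{-1}(B)=f(B)$. So I must show that the forward image under a measurable bijection of a Borel set is Borel. First I would reduce to a concrete model. By definition of a Borel space there are Borel isomorphisms $\phi\colon S\to S_0$ and $\psi\colon T\to T_0$ with $S_0,T_0\leq[0,1]$; then $g\coloneqq\psi\circ f\circ\phi^{-1}\colon S_0\to T_0$ is again a measurable bijection, and since $f^{-1}=\phi^{-1}\circ g^{-1}\circ\psi$ is a composition in which $\phi^{-1}$ and $\psi$ are measurable, it suffices to prove that $g^{-1}$ is measurable, i.e.\ that $g(C)\leq[0,1]$ for every $C\leq S_0$. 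Here I use that the Borel subsets of $S_0$ are exactly the traces on $S_0$ of the Borel subsets of $[0,1]$, $S_0$ itself being Borel in $[0,1]$, and likewise for $T_0$; so a subset of $T_0$ that is Borel in $[0,1]$ is automatically Borel in $T_0$.

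The engine of the proof is the theory of analytic subsets of a Polish space. Recall that a set is \emph{analytic} if it is a Borel image of a Borel subset of a Polish space; I would invoke the standard facts that every Borel set is analytic, that the analytic sets are stable under countable unions, countable intersections, and Borel images, and --- crucially --- \emph{Suslin's theorem}: a subset of a Polish space is Borel precisely when it and its complement are both analytic (which itself rests on \emph{Lusin's separation theorem}, that two disjoint analytic sets can always be separated by a Borel set). Granting these, the argument is short. Fix $C\leq S_0$; then $C\leq[0,1]$ and $S_0\setminus C\leq[0,1]$, so $g(C)$ and $g(S_0\setminus C)$ are analytic, being Borel images of Borel sets. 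Since $g$ maps $S_0$ bijectively onto $T_0$,
\[
[0,1]\setminus g(C)=\big([0,1]\setminus T_0\big)\cup g(S_0\setminus C),
\]
and the first summand is Borel, hence analytic, because $T_0\leq[0,1]$; so the union is analytic. Thus $g(C)$ is analytic with analytic complement, whence $g(C)\leq[0,1]$ by Suslin's theorem. This shows $g^{-1}$ is measurable, and therefore so is $f^{-1}$, as claimed.

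The honest obstacle is not this bookkeeping but the descriptive-set-theoretic input: Lusin's separation theorem and Suslin's theorem are the hard part, their proofs relying on the combinatorial apparatus of Suslin schemes --- trees of sets with diameters shrinking to zero along branches --- together with the observation that inability to separate $\bigcup_n A_n$ from $\bigcup_m B_m$ forces inability to separate some $A_n$ from some $B_m$, which, iterated down the tree, produces a common point and hence a contradiction. In keeping with the style of this appendix I would either import these two theorems as black boxes (as in Kallenberg's treatment of the ``hard results in measure theory'') or insert them as preliminary lemmas; everything else above is routine. I should also note that only the bijective case is actually needed: the general assertion that an \emph{injective} measurable image of a Borel set is Borel requires the slightly stronger fact that countably many pairwise disjoint analytic sets admit a separation by pairwise disjoint Borel sets, but that refinement is not needed for the statement as given.
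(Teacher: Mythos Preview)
The paper does not supply a proof of this theorem at all: it is stated in the appendix purely as an imported auxiliary fact, with the citation to Kallenberg's Theorem~A1.3 standing in lieu of any argument. So there is no ``paper's own proof'' to compare against.

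Your proposal is correct and is in fact the standard route (and essentially the one Kallenberg follows): reduce via Borel isomorphisms to a measurable bijection between Borel subsets of $[0,1]$, then use the theory of analytic sets together with Suslin's theorem to show that the forward image of a Borel set is Borel. Your bookkeeping with the complement, $[0,1]\setminus g(C) = ([0,1]\setminus T_0)\cup g(S_0\setminus C)$, is exactly right. You are also honest about where the real work lies --- Lusin separation and Suslin's theorem --- and your decision to import these as black boxes is entirely in keeping with the spirit of the paper's appendix, which treats all of these classical results as citations rather than as things to be reproved.
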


\end{appendix}

\section*{Acknowledgements}
The author Luísa Borsato is supported by grant 2018/21067-0, S\~ao Paulo Research Foundation (FAPESP).

The author Eduardo Horta wishes to thank MCTIC/CNPq (process number 438642/2018-0) for financial support.





\end{document}